\documentclass[11pt,a4paper,reqno]{amsart}
\usepackage[left=3cm,right=3cm,top=2.8cm,bottom=2.8cm]{geometry}
\usepackage[english]{babel}

\usepackage{amsmath,amsfonts,amsthm,amssymb}
\usepackage{cancel}
\usepackage{latexsym,soul,cite,mathrsfs}
\usepackage{cite,marginnote}
\usepackage[hyperpageref]{backref}
\usepackage{todonotes}
\usepackage{cases}

\usepackage{color,enumitem,graphicx}
\usepackage[colorlinks=true,urlcolor=blue,
citecolor=red,linkcolor=blue,linktocpage,pdfpagelabels]{hyperref}
\usepackage{tikz}

\numberwithin{equation}{section}
\newtheorem{theorem}{Theorem}[section]
\newtheorem{lemma}{Lemma}[section]

\newtheorem{proposition}{Proposition}[section]
\newtheorem{remark}{Remark}[section]

\makeindex

\newcommand{\R}{\mathbb R}
\newcommand{\N}{\mathbb N}

\def\e{{\rm e}}
\def\ri{{\rm i}}

 \def\dd{\, {\rm d}}

\title[]{Mixed-dispersion Schr\"odinger equations and Gagliardo-Nirenberg inequalities: equivalence between ground states and optimizers}
\author[Z.Liu, G.Romani, Y.Su]{Zhisu Liu, Giulio Romani, Yu Su}

\address[Zhisu Liu]{\newline\indent
School of Mathematics and Physics, China University of Geosciences,
\newline\indent
Wuhan, Hubei, 430074, P.R. China\\
\newline\indent
Institute for Advanced Marine Research, China University of Geosciences,
\newline\indent
Guangzhou 511462, P. R. China}
\email{\href{mailto:liuzhisu@cug.edu.cn}{liuzhisu@cug.edu.cn}}

\address[Giulio Romani]{\newline\indent
Dipartimento di Scienze Matematiche, Informatiche e Fisiche,
\newline\indent
Universit\`{a} degli Studi di Udine,
\newline\indent
Via delle Scienze 206, 33100 Udine, Italy.}
\email{\href{mailto:giulio.romani@uniud.it}{giulio.romani@uniud.it}}

\address[Yu Su]{\newline\indent
School of Mathematics and Big Data,
\newline\indent
Anhui University of Science and Technology,
\newline\indent
Huainan, Anhui 232001, PR China.}
\email{\href{mailto:yusumath@aust.edu.cn}{yusumath@aust.edu.cn}}

\subjclass[2010]{35J35, 35J91, 35Q55}
\keywords{Fourth-order Schr\"odinger equation, ground state solutions, mixed dispersion, Gagliardo-Nirenberg inequalities, normalized solutions}

\begin{document}

\begin{abstract}
    We study a nonlinear Schr\"odinger equation with mixed dispersion in the mass competition regime, namely mass-supercritical for the Laplacian and mass-subcritical for the Bilaplacian. In this setting, the existence of a critical value of the mass $c_\varepsilon$, which divides existence and nonexistence of energy ground state solutions, was established in \cite{Bonheure-Casteras-dosSantos-Nascimento2018SIAM}. In this work, we strengthen these results by investigating the relationship between the energy ground states with critical mass, and the optimizers of mixed Gagliardo-Nirenberg-type inequalities. Moreover, we discuss the equivalence between energy and action ground states solutions.
\end{abstract}

\maketitle

\section{Introduction}
In this paper, we study properties of standing waves solutions of the nonlinear Schr\"odinger equation with fourth-order dispersion
\begin{equation}\label{1.1}
	\begin{cases}
		\ri\frac{\partial\psi}{\partial t}-\varepsilon\Delta^2\psi+\Delta\psi+|\psi|^{p-2}\psi=0\,, \quad\ (t,x)\in \R^+\times\R^N,\\
		\psi(0,x)=\psi_0(x)\,,
	\end{cases}
\end{equation}
where we always assume $\varepsilon>0$ and $p\in(2,2^*)$, where $2^*=\frac{2N}{N-4}$ if $N\geqslant5$, while $2^*=+\infty$ otherwise. Inserting the ansatz
\begin{equation}\label{psi}
	\psi(t,x)=\e^{\ri\omega t}u(x)\,,\quad\ \omega\in\R\,,
\end{equation}
in \eqref{1.1}, it is easy to verify that $\psi$ is a solution to \eqref{1.1} if and only if $u$ is a solution to the fourth-order elliptic nonlinear Schr\"{o}dinger equation
\begin{equation}\label{BS}\tag{$BS$}
	\varepsilon\Delta^2 u-\Delta u+\omega u=|u|^{p-2}u\,, \quad\ x\in \R^N.
\end{equation}
Since the mass $\|\psi(t,\,\cdot)\|_{L^2(\R^N)}$ is a conserved quantity of the problem \eqref{1.1}, it is natural to look for solutions $u$ of \eqref{BS} with prescribed $L^2$-norm. From a physical point of view, the most interesting solutions are the {\it energy ground states (energy GSS)} namely those solutions which minimize the energy (that is the second quantity which is conserved in time by \eqref{1.1}), subject to a mass constraint. More precisely, energy GSS are the minimizers of the problem
\begin{equation}\label{m_eps_c}
	m_\varepsilon(c):=\inf_{u\in S_{c}}E_\varepsilon(u)\,,
\end{equation}
where
\begin{equation*}
	S_c:=\left\{u\in H^2(\R^N)\bigg| \int_{\R^N}|u|^2\dd x=c\right\}
\end{equation*}
and the \textit{energy functional} is
\begin{equation*}
	E_\varepsilon(u):=\frac\varepsilon2\int_{\R^N}|\Delta u|^2\dd x+\frac12\int_{\R^N}|\nabla u|^2\dd x-\frac1p\int_{\R^N}|u|^p\dd x\,.
\end{equation*}
In this way, to each minimizer $u\in S_c$ of $m_\varepsilon(c)$ corresponds a frequency $\omega>0$ (which is therefore part of the unknown and found as a Lagrange multiplier) such that $(u,\omega)$ weakly solves \eqref{BS}. 

If one instead prescribes a fixed frequency $\omega>0$ for the standing wave $\psi$ in \eqref{psi}, one may find solutions of \eqref{BS} by looking for critical points of the \textit{action functional}
\begin{equation*}
	\begin{split}
		I_{\omega}(u)&=\frac\varepsilon2\int_{\R^N}|\Delta u|^2\dd x+\frac12\int_{\R^N}|\nabla u|^2\dd x+\frac\omega2\int_{\R^N}|u|^2\dd x-\frac1p\int_{\R^N}|u|^p\dd x\\
		&=E_\varepsilon(u)+\frac\omega2\|u\|_{L^2(\R^N)}^2\,.
	\end{split}
\end{equation*}
Recalling that the Nehari manifold
\begin{equation}\label{Nehari}
	\mathcal{N}:=\{u\in H^2(\R^N)\backslash\{0\}\,|\,\langle I_\omega'(u),u\rangle=0\}
\end{equation}
is a natural constraint for critical points of $I_\omega$, the \textit{action GSS} are defined as the minimizers of the problem
\begin{equation}\label{sigma_omega}
	\sigma_{\omega}:=\inf_{u\in \mathcal{N}}I_\omega(u)\,.
\end{equation}
It is clear that now is the mass of the solution, namely its $L^2$ norm, that cannot be a priori fixed.

\vskip0.2truecm
Main aim of this paper is to sharpen the understanding of energy GSS and action GSS, by investigating the relationships between the two concepts, as well as, by characterizing them with respect to suitable Gagliardo-Nirenberg type inequalities. Before entering into the details of our results, and in order to motivate them, let us recall some important works in this direction.

\vskip0.2truecm

If $\varepsilon=0$, \eqref{1.1} reduces to the well-known nonlinear Schr\"odinger equation
\begin{equation}\label{NLS}\tag{$NLS$}
	\begin{cases}
		i\frac{\partial \psi}{\partial t}+\Delta \psi+|\psi|^{p-2}\psi=0\,, \quad\ (t,x)\in \R^+\times\R^N,\\
		\psi(0,x)=\psi_0(x)\,,
	\end{cases}
\end{equation}
which serves as a model for many important phenomena in physics, such as nonlinear optics and the theory of water waves. The NLS is nowadays well understood from a mathematical point of view. In \cite[Corollary 4.3.4]{Cazenave2003BOOK} Cazenave proved that \eqref{NLS} is locally well-posed in $H^1(\R^N)$ for $p<\frac{2N}{N-2}$ if $N\geqslant3$ or $p<+\infty$ if $N\in\{1,2\}$; moreover, when $p<2+\frac4N$, all solutions to \eqref{NLS} exist globally in time \cite[Corollary 6.1.2]{Cazenave2003BOOK} and standing waves are orbitally stable \cite[Theorem 8.3.1]{Cazenave2003BOOK}. When $p\geqslant2+\frac4N$, then finite-time blowup may occur and the standing wave solutions become unstable, see \cite[Theorem 6.5.10]{Cazenave2003BOOK}. For the applications, this is unpleasant, in particular since the Kerr nonlinearity ($p=4$) turns out to be critical in dimension $N=2$ and supercritical when $N=3$. 

In order to recover stability of the solutions to \eqref{NLS} for $p\geqslant2+\frac4N$, Karpman and Shagalov proposed in \cite{Karpman-Shagalov2000PD} to add a fourth-order dispersion term in the model, i.e., they considered equation \eqref{1.1} with $\varepsilon>0$ small. In fact, they showed that the standing waves are orbitally stable for any $\varepsilon>0$ when $p\leqslant 2+\frac4N$, and for small $\varepsilon>0$ when $2+\frac4N<p<2+\frac8N$, while the instability occurs when $p\geqslant2+\frac8N$. In this way, one retrieves stability for \eqref{NLS} with the cubic Kerr nonlinearity in the physical dimensions $N=2,3$. 

The introduction of a small biharmonic term can be also justified if one comes back to the derivation of the \eqref{NLS} from the nonlinear Helmholtz equation in nonlinear optics, by means of the paraxial approximation: one recovers \eqref{1.1} if one does not neglect the influence of a small but nonzero biharmonic term which arises as (part of) the nonparaxial correction to the \eqref{NLS}. With this additional term, the authors in \cite{Fibich-Ilan-Papanicolaou2002SIAMJAM} managed indeed to show that solutions to \eqref{1.1} exist globally in time for any $p<2+\frac8N$. For a more detailed insight on the topic and related results, we refer to \cite{Bonheure-Casteras-Gou-Jeanjean2019IMRN,Bonheure-Casteras-Mandel2019JLMS,Boulenger-Lenzmann2017AENS,Dinh2021Nonlinearity,Campos-Guzan2022CVPDE,LuoX-YangT2023SCM,Pausader2007DPDE,Pausader-ShaoSL2010JHDE,Pausader2009JFA,Saanouni2021CVPDE,Fibich-Ilan-Schochet2003Nonlinearity}.

\vskip0.2truecm
Let us first consider the \lq\lq energy approach\rq\rq, namely problem \eqref{m_eps_c}. In order to better introduce our analysis of energy GSS for \eqref{BS}, let us start by recalling some results on the second-order Schr\"{o}dinger equation
\begin{equation}\label{S}\tag{$S$}
	-\Delta u+\omega u=|u|^{p-2}u\,, \quad\ x\in \R^N,
\end{equation}
which comes from \eqref{NLS}, and is \eqref{BS} for $\varepsilon=0$. The mass-critical exponent of equation \eqref{S} is
\begin{equation*}
	p=2+\frac4N\,,
\end{equation*}
which can be derived by means of the classical Gagliardo-Nirenberg inequality \cite{N}: for $N\geqslant3$, $p\in(2,\frac{2N}{N-2})$ and $u\in H^1(\R^N)$, one has
\begin{equation}\label{1.2}
	\|u\|_{L^p(\R^N)}^p\leqslant C\|u\|_{D^{1,2}(\R^N)}^{\frac{N(p-2)}2}\|u\|_{L^2(\R^N)}^{p-\frac{N(p-2)}2},
\end{equation}
where the space $D^{1,2}(\R^N)$ is defined as the completion of $C^\infty_0(\R^N)$ with seminorm
\begin{equation*}
	\|u\|_{D^{1,2}(\R^N)}^2:=\int_{\R^N}|\nabla u|^2\dd x\,.
\end{equation*}

By using variational methods and \eqref{1.2}, Cazenave and Lions proved in \cite{Cazenave-Lions1982CMP} the existence of energy GSS for equation \eqref{S} with mass-subcritical exponent $p\in(2,2+\frac4N)$, and their nonexistence in the mass-supercritical regime $p\in(2+\frac4N,\frac{2N}{N-2})$. The mass-critical case $p=2+\frac4N$ was then considered by Weinstein in \cite{Weinstein1983CMP}. Their results may be summarized as follows.

\newtheorem{TheoremA}{Theorem}
\renewcommand{\theTheoremA}{A}
\begin{TheoremA}
	{\rm \cite{Cazenave-Lions1982CMP,Weinstein1983CMP}}
	Let $N\geqslant 1$.
	\begin{enumerate}
		\item If $p\in(2,2+\frac4N)$, there exists an energy GSS for any $c>0$.
		\item If $p=2+\frac4N$, there exists $\bar{c}>0$ such that \eqref{S} has an energy GSS if and only if $c=\bar c$. Moreover, one can construct an energy GSS by means of the optimizer of the Gagliardo-Nirenberg inequality \eqref{1.2}.
		\item If $p\in(2+\frac4N,\frac{2N}{N-2})$, there is no energy GSS for any $c>0$.
	\end{enumerate}
\end{TheoremA}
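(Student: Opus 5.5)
The plan is to treat the three regimes through the scaling structure of $E_0(u)=\frac12\|\nabla u\|_2^2-\frac1p\|u\|_p^p$ on $S_c$, using \eqref{1.2} with the optimal constant $C_{GN}$. The basic tool is the mass-preserving dilation $u_t(x)=t^{N/2}u(tx)$. It keeps $\|u_t\|_2^2=c$ and gives
\begin{equation*}
E_0(u_t)=\frac{t^2}{2}\|\nabla u\|_2^2-\frac{t^{\frac{N(p-2)}2}}{p}\|u\|_p^p .
\end{equation*}
Set $\gamma:=\frac{N(p-2)}{2}$. The three cases correspond to $\gamma<2$, $\gamma=2$ and $\gamma>2$.

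\textbf{Case (3), $\gamma>2$.} Letting $t\to\infty$ in the formula above gives $E_0(u_t)\to-\infty$, so $\inf_{S_c}E_0=-\infty$ for every $c$, and no minimizer exists.

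\textbf{Case (1), $\gamma<2$.} I follow the concentration–compactness scheme of Cazenave–Lions.
\begin{enumerate}
\item \emph{Coercivity.} By \eqref{1.2}, $E_0(u)\geqslant \frac12\|\nabla u\|_2^2-Cc^{(p-\gamma)/2}\|\nabla u\|_2^{\gamma}$. Since $\gamma<2$, $E_0$ is bounded below and coercive on $S_c$, so minimizing sequences are bounded in $H^1$.
\item \emph{Negativity.} Letting $t\to0$ in the scaling formula, the negative term $-t^{\gamma}\|u\|_p^p/p$ dominates, so $m(c)<0$.
\item \emph{Strict subadditivity.} For $\theta>1$ one has $m(\theta c)<\theta m(c)$: scale $u\mapsto\sqrt\theta u$ and use $p>2$ together with $m(c)<0$. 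This yields $m(c)<m(\alpha)+m(c-\alpha)$ for $0<\alpha<c$, which rules out dichotomy.
\item \emph{No vanishing.} Vanishing is excluded by Lions' lemma, since $m(c)<0$ forces $\|u_n\|_p\not\to0$.
\item \emph{Conclusion.} After translations the minimizing sequence is compact in $L^2$, hence in $L^p$ by interpolation. Weak lower semicontinuity of the gradient term then gives a minimizer.
\end{enumerate}
This subadditivity/compactness step is the main technical part of the theorem.

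\textbf{Case (2), $\gamma=2$.} Here $E_0(u)\geqslant\frac12\|\nabla u\|_2^2\bigl(1-\frac{2C_{GN}}{p}c^{2/N}\bigr)$. Define $\bar c$ by $\frac{2C_{GN}}{p}\bar c^{2/N}=1$; by Weinstein's characterization, $\bar c$ is the mass of the optimizer $Q$ of \eqref{1.2}, i.e.\ the positive ground state of \eqref{S}.
\begin{enumerate}
\item \emph{$c<\bar c$.} Then $E_0>0$ on $S_c$, while $E_0(u_t)\to0$ as $t\to0$. So $m(c)=0$ is not attained.
\item \emph{$c>\bar c$.} Take $u=\sqrt{c/\bar c}\,Q$, which satisfies $E_0(u)<0$. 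Since $E_0(u_t)=t^2E_0(u)$, letting $t\to\infty$ gives $m(c)=-\infty$.
\item \emph{$c=\bar c$.} We have $E_0\geqslant0$, with equality exactly on the Gagliardo–Nirenberg optimizers. Weinstein's existence of an optimizer (obtained by minimizing the Weinstein functional, using Schwarz symmetrization and compactness of radial functions) gives $m(\bar c)=0$ attained by a suitably normalized optimizer.
\end{enumerate}
Its Euler–Lagrange equation, after scaling, is \eqref{S}, which gives the stated construction of the energy GSS.

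The genuine obstacle is the existence of an optimizer for \eqref{1.2}, i.e.\ compactness for the Weinstein functional, which I would take from \cite{Weinstein1983CMP}.
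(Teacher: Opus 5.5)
Your proposal is correct and reproduces the classical arguments of the works the paper cites for Theorem A: scaling for $p>2+\frac4N$, Cazenave--Lions concentration-compactness with strict subadditivity for $p<2+\frac4N$, and Weinstein's sharp constant and optimizer for $p=2+\frac4N$. The paper does not prove Theorem A itself, and your critical-case bound $E_0(u)\geqslant\frac12\|\nabla u\|_2^2\bigl(1-\frac{2C_{GN}}{p}c^{2/N}\bigr)$ is the second-order analogue of the factorization \eqref{3.1} that the paper uses to prove Theorem~\ref{Theorem1.1}.
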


A similar analysis can be carried out for the biharmonic equation
\begin{equation}\label{B}\tag{$B$}
	\Delta^2 u+\omega u=|u|^{p-2}u\,, \quad\ x\in \R^N,
\end{equation}
which is \eqref{BS} with $\varepsilon=1$ but without the second-order term $-\Delta u$, see \cite[page 9]{Fernandez-Jeanjean-Mandel-Maris2022JDE}. Now the mass-critical exponent of \eqref{B} is
\begin{equation*}
	p=2+\frac8N\,,
\end{equation*}
which can be derived by means of the second-order analogue of the Gagliardo-Nirenberg inequality \eqref{1.2}: for $q\in(2,\frac{2N}{N-4})$ and $u\in H^2(\R^N)$, then
\begin{equation}\label{1.3}
	\|u\|_{L^q(\R^N)}^q\leqslant C\|u\|_{D^{2,2}(\R^N)}^{\frac{N(q-2)}4}\|u\|_{L^2(\R^N)}^{q-\frac{N(q-2)}4}\,,
\end{equation}
where the space $D^{2,2}(\R^N)$ is defined as the completion of $C^\infty_0(\R^N)$ with seminorm
\begin{equation*}
	\|u\|_{D^{2,2}(\R^N)}^2:=\int_{\R^N}|\Delta u|^2\dd x\,.
\end{equation*}
Also in this setting, in the mass-critical case $p=2+\frac8N$, the phenomenon of a critical mass appears, as the only value of the mass for which energy GSS are allowed.

Compared to \eqref{NLS} and \eqref{B}, the \textit{mixed dipersion} nonlinear Schr\"odinger equation \eqref{BS} presents intrinsic
difficulties due to the interplay between the Laplacian and the Bilaplacian, especially in the \textit{mass competition regime}
\begin{equation*}
	p\in\left(2+\frac4N,2+\frac8N\right),
\end{equation*}
that is, when $p$ is mass-subcritical for problem \eqref{B} and mass-supercritical for problem \eqref{S}. In this setting, Bonheure et al. in \cite{Bonheure-Casteras-Gou-Jeanjean2019TAMS,Bonheure-Casteras-dosSantos-Nascimento2018SIAM} proved the following mixed Gagliardo-Nirenberg-type inequality:
\begin{equation}\label{1.4}\tag{GN$_{C}$}
	\|u\|_{L^p(\R^N)}^p\leqslant C_{N,p}\|u\|_{D^{2,2}(\R^N)}^{\frac{N(p-2)-4}2}\|u\|_{D^{1,2}(\R^N)}^{\frac{8-N(p-2)}2}\|u\|_{L^2(\R^N)}^{p-2}\,,\qquad u\in H^2(\R^N)\,.
\end{equation}
Here $C_{N,p}$ denotes the best constant of \eqref{1.4}. In light of this, they investigated existence and nonexistence of energy GSS for equation \eqref{BS}, and in the next theorem we state their main results.

\newtheorem{TheoremB}{Theorem}
\renewcommand{\theTheoremB}{B}
\begin{TheoremB}\label{TheoremB}
	{\rm\cite{Bonheure-Casteras-Gou-Jeanjean2019TAMS,Bonheure-Casteras-dosSantos-Nascimento2018SIAM}}
	Let $\varepsilon>0$. Then there exists $c_\varepsilon>0$ such that the following holds.
	\begin{enumerate}
		\item If $p\in(2,2+\frac4N)$, then equation \eqref{BS} has an energy GSS for any $c>0$.
		\item If $p=2+\frac4N$, equation \eqref{BS} has an energy GSS for $c>c_\varepsilon$, and has no energy GSS for $c<c_\varepsilon$.
		\item If $p\in(2+\frac4N,2+\frac8N)$, then equation \eqref{BS} has an energy GSS if and only if $c\geqslant c_\varepsilon$, where
		\begin{equation}\label{c_epsilon}
			c_\varepsilon=C_{N,p}^{-\frac2{p-2}}\left[\frac{2p}{N(p-2)-4}\right]^{\frac{N(p-2)-4}{2(p-2)}}\left[\frac{2p}{8-N(p-2)}\right]^{\frac{8-N(p-2)}{2(p-2)}}\varepsilon^{\frac{N(p-2)-4}{2(p-2)}}.
		\end{equation}
		\item If $p\in[2+\frac8N,\frac{2N}{N-4})$, then there is no energy GSS for any $c>0$.
	\end{enumerate}
\end{TheoremB}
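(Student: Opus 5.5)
Theorem B is quoted from \cite{Bonheure-Casteras-Gou-Jeanjean2019TAMS,Bonheure-Casteras-dosSantos-Nascimento2018SIAM}. The core of its proof is the mass competition case (3), and in particular the explicit formula \eqref{c_epsilon}; the remaining items use the same ingredients more crudely. The plan is to study $m_\varepsilon(c)$ through the scaling $u_t(x)=t^{N/2}u(tx)$. This scaling preserves the mass and gives
\[
E_\varepsilon(u_t)=\frac{\varepsilon t^4}{2}\|\Delta u\|_2^2+\frac{t^2}{2}\|\nabla u\|_2^2-\frac{t^{N(p-2)/2}}{p}\|u\|_p^p .
\]
In the regime $p\in(2+\frac4N,2+\frac8N)$ one has $2<\frac{N(p-2)}2<4$. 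Letting $t\to0$ shows $m_\varepsilon(c)\leqslant0$ for every $c>0$, and $E_\varepsilon$ is coercive on $S_c$ by \eqref{1.4} combined with Young's inequality, since the exponent of $\|\Delta u\|_2$ in \eqref{1.4} is $\frac{N(p-2)-4}{2}<2$ and the total derivative order is below the quadratic part.

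The key step is to characterize when $m_\varepsilon(c)<0$. Write $a=\frac{N(p-2)-4}{4}$ and $b=\frac{8-N(p-2)}{4}$, so $a+b=1$. Combining \eqref{1.4} with the weighted AM--GM inequality
\[
\varepsilon\|\Delta u\|_2^2+\|\nabla u\|_2^2\geqslant \frac{(\varepsilon\|\Delta u\|_2^2/a)^a(\|\nabla u\|_2^2/b)^b}{1}\cdot(a^ab^b)\cdot\frac1{a^ab^b}
\]
(that is, $X+Y\geqslant (X/a)^a(Y/b)^b$), one gets $E_\varepsilon(u)\geqslant0$ on $S_c$ exactly when
\[
\frac{2}{p}C_{N,p}c^{\frac{p-2}{2}}\leqslant \varepsilon^{a}a^{-a}b^{-b}.
\]
Solving for $c$ reproduces \eqref{c_epsilon}, which gives $m_\varepsilon(c)=0$ for $c\leqslant c_\varepsilon$. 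For $c>c_\varepsilon$, I would take an optimizer $Q$ of \eqref{1.4}, whose existence follows from concentration compactness applied to the Weinstein-type functional. Using its two-parameter rescalings $\lambda Q(\mu x)$, I would match the equality case of AM--GM and produce $E_\varepsilon<0$. At $c=c_\varepsilon$ the same choice gives a function with $E_\varepsilon=0$, hence a minimizer.

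For existence when $c>c_\varepsilon$, where $m_\varepsilon(c)<0$, I would use Lions' concentration compactness. Vanishing is excluded because it would force $\|u_n\|_p\to0$ and hence $\liminf E_\varepsilon\geqslant0$. Dichotomy is excluded by the strict subadditivity $m_\varepsilon(c)<m_\varepsilon(\alpha)+m_\varepsilon(c-\alpha)$, which follows from $m_\varepsilon(\theta c)\leqslant\theta m_\varepsilon(c)$ for $\theta>1$ (via $u\mapsto\sqrt\theta u$) with strict inequality when $m_\varepsilon<0$. Nonexistence for $c<c_\varepsilon$ follows because the AM--GM inequality is then strict, so $E_\varepsilon(u)>0=m_\varepsilon(c)$ for all $u\in S_c$. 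Items (1), (2) and (4) follow from the same scaling: in (1) $m<0$ always; in (4) $t\to\infty$ gives $m=-\infty$ when $p>2+\frac8N$, and gives either $m=-\infty$ or a nonattained $0$ when $p=2+\frac8N$; (2) uses the sharp constant in \eqref{1.3}-type estimates.

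The main obstacle I expect is the existence of an optimizer for \eqref{1.4}. The quotient is invariant under two independent scalings, and it is precisely the optimizer that is needed for attainment at the threshold $c=c_\varepsilon$. I would handle it by normalizing the three norms to $1$ using both scalings plus a multiplication by a constant, then applying concentration compactness and profile decomposition to a maximizing sequence.
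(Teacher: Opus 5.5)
\textbf{Comparison with the paper.} The paper does not prove Theorem~B. It imports it from the cited works and uses it as input: Lemma \ref{Lemma_Bonheure}, the existence of $u_{c_\varepsilon}$ in Lemma \ref{Lemma3.2}, and the formula \eqref{c_epsilon} to identify $\|Q\|_2^2=c_\varepsilon$ in Proposition \ref{Theorem4.2}.

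Your treatment of item (3), the only item the paper relies on, is correct and self-contained. Set $a=\frac{N(p-2)-4}4$ and $b=\frac{8-N(p-2)}4$. Then \eqref{1.4} together with $X+Y\geqslant (X/a)^a(Y/b)^b$ gives, on $S_c$,
$E_\varepsilon(u)\geqslant\frac12\|\Delta u\|_2^{2a}\|\nabla u\|_2^{2b}\bigl[\varepsilon^a a^{-a}b^{-b}-\frac2pC_{N,p}c^{\frac{p-2}2}\bigr]$.
The bracket vanishes exactly at the value \eqref{c_epsilon}; the exponents do match. A rescaled optimizer $\lambda Q(\mu\,\cdot)$ with mass $c$ and $\varepsilon\|\Delta u\|_2^2/a=\|\nabla u\|_2^2/b$ is extremal in both inequalities.

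This is Section 4 run backwards. The paper goes from the optimizer to its Euler--Lagrange equation and the identities \eqref{Q_norms_nabla_delta}--\eqref{Q_identity}, then matches $\|Q\|_2$ against a formula taken from the literature. You instead derive that formula, and Lemma \ref{Lemma_Bonheure}, directly. Your AM--GM equality condition is precisely \eqref{Q_norms_nabla_delta}.

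The price is that you must supply the compactness argument for $c>c_\varepsilon$, which the paper never needs. Your strict-subadditivity route works, with one case split. Apply $m_\varepsilon(\theta s)<\theta m_\varepsilon(s)$ to the larger piece. If that piece has $m_\varepsilon=0$, then so does the smaller one, and strictness follows from $m_\varepsilon(c)<0$.

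\textbf{Points to correct.}

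First, the normalization you propose for the optimizer of \eqref{1.4} cannot be carried out. Your scalings (dilations and multiplication by constants, of which $t^{N/2}u(t\,\cdot)$ is a combination) only generate the two-parameter group $u\mapsto\lambda u(\mu\,\cdot)$. The quantity $\|\nabla u\|_2^2/(\|\Delta u\|_2\|u\|_2)$ is invariant under this group and strictly less than $1$ for $u\neq0$, so the three norms cannot all equal $1$. Do as in Lemma \ref{Lemma_scaling} instead:
\begin{enumerate}
\item Normalize $\|\Delta v_n\|_2=\|\nabla v_n\|_2=1$.
\item Use \eqref{1.2} and \eqref{1.3} to keep $\|v_n\|_2$ in a compact subset of $(0,+\infty)$ and $\|v_n\|_p$ away from $0$; this is where $2+\frac4N<p<2+\frac8N$ enters.
\item As in Lemma \ref{Lemma_minimizer}, translate to get $v_n\rightharpoonup v\neq0$ and split all four quantities, using Brezis--Lieb for the $L^p$ term.
\item Apply \eqref{1.4} to $v$ and to $v_n-v$, then Young with $a+b=1$. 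Equality is forced, and since $\|\Delta v\|_2\|\nabla v\|_2>0$ this gives $\|v\|_2=\lim\|v_n\|_2$. Hence $v_n\to v$ in $L^2$, and then in $H^2$.
\end{enumerate}
This one-profile argument replaces your profile decomposition.

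Second, your pointer for item (2) is to the wrong inequality. At $p=2+\frac4N$ one has $E_\varepsilon(u_t)=\frac{\varepsilon t^4}2\|\Delta u\|_2^2+t^2\bigl(\frac12\|\nabla u\|_2^2-\frac1p\|u\|_p^p\bigr)$. The threshold is therefore set by the sharp constant $C_*$ of \eqref{1.2}, not \eqref{1.3}. For $c\leqslant(p/(2C_*))^{N/2}$ one gets $E_\varepsilon\geqslant\frac\varepsilon2\|\Delta u\|_2^2>0$ on $S_c$, and $m_\varepsilon(c)<0$ above that value. So in item (2), $c_\varepsilon$ is the Weinstein mass and does not depend on $\varepsilon$.

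Finally, two smaller slips. For $c<c_\varepsilon$, the strict positivity of $E_\varepsilon$ comes from the strict comparison of constants, not from strictness in AM--GM. Coercivity on $S_c$ needs $\|\nabla u\|_2^2\leqslant\|\Delta u\|_2\|u\|_2$ (or directly \eqref{1.3}) on top of \eqref{1.4} and Young, since \eqref{1.4} alone is only quadratic in the derivative norms.
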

For more recent advances in the topic, we refer to \cite{LTW,LZZ,LuoX-YangT2023SCM,Ma}.
\begin{remark}
	In these results, the fourth-order dispersion parameter $\varepsilon$ is given, and the condition for the existence of energy GSS is described by the size of the mass $c$. From the physical point of view, Karpman \cite{Karpman1996PRE} and Karpman-Shagalov \cite{Karpman-Shagalov2000PD} considered the stability of steady solitions by the effect of the fourth-order dispersion $\varepsilon$, hence, it is rather natural to describe the condition for the existence of energy GSS in terms of $\varepsilon$. The two approaches are clearly equivalent, and we can therefore restate the third statement of Theorem \ref{TheoremB} as follows:
	Let $N\geqslant5$, $p\in(2+\frac4N,2+\frac8N)$ and $c>0$ be given. Then equation \eqref{BS} has an energy GSS if and only if $\varepsilon\in(0,\varepsilon_c]$, where
	\begin{equation*}
		\varepsilon_c:=c^{\frac{p-2}{N(p-2)-4}}C_{N,p}^{\frac4{N(p-2)-4}}\frac{N(p-2)-4}{2p}\left[\frac{2p}{8-N(p-2)}\right]^{-\frac{8-N(p-2)}{N(p-2)-4}}.
	\end{equation*}
\end{remark}

\vskip0.2truecm
Let us now come to the complementary approach of finding solutions by minimizing action functionals on the corresponding Nehari set. For elliptic problems of the kind \eqref{NLS} or \eqref{BS}, this method has a long history and has nowadays become standard, and it is well known that action GSS exist for \eqref{BS} for any $p\in(2,2^*)$. We just mention, among others, the recent works \cite{BN,d'Avenia-Pomponio-Schino2023Nonlinearity} concerning \eqref{BS}. It is interesting therefore to compare the results of the \lq\lq energy\rq\rq approach and the \lq\lq action\rq\rq approach. In other words, under which conditions the two sets of solutions coincide? For \eqref{NLS} this long-standing problem was firstly solved in \cite{JL} in the $\R^N$ setting, where the equivalence between energy GSS and action GSS was shown for $p\in(2,2+\frac4N)$. We point out that the question for bounded domains is more delicate, and we refer to \cite{Dovetta-Serra-Tilli2023MA}; we also mention that the uniqueness of energy GSS for \eqref{NLS} when $p\in (2,2+\frac4N)$ is established in \cite{Hajaiej-SongLJ2025MZ,Dovetta-Serra-Tilli2020AM}. Concerning \eqref{BS} but with a positive sign in front of the Laplacian, equivalence results between energy GSS and action GSS have been established in \cite[Theorem 1.3]{Fernandez-Jeanjean-Mandel-Maris2022JDE} for $p\in(2,2+\frac8N)$.

\subsection{Main Results}

In Theorem A, for $p=2+\frac4N$ and critical mass $\bar c$, the authors characterized  $u_{\bar c}$ (the energy GSS with critical mass $\bar c$), i.e., they showed the relation between the optimizer of the Gagliardo-Nirenberg inequality \eqref{1.2} and $u_{\bar c}$. A corresponding result can also be proved for equation \eqref{B} in the mass-critical regime $p=2+\frac8N$. For $p\in(2+\frac4N,2+\frac8N)$, such a connection between energy GSS of \eqref{BS} with critical mass and the optimizers of the corresponding inequality \eqref{1.4} has not been investigated in \cite{Bonheure-Casteras-dosSantos-Nascimento2018SIAM}. Our first aim is therefore to characterize the energy GSS $u_{c_\varepsilon}$ in terms of optimizers of suitable Gagliardo-Nirenberg inequalities.

We start by recalling the following non-homogeneous Gagliardo-Nirenberg inequality (see \cite{Fernandez-Jeanjean-Mandel-Maris2022JDE})
\begin{equation}\label{1.5}\tag{GN$_{ K}$}
	\|u\|_{L^p(\R^N)}^p\leqslant K_{N,p,\varepsilon}\|u\|_{L^2(\R^N)}^{p-2}\left(\varepsilon\|u\|_{D^{2,2}(\R^N)}^{2}+\|u\|_{D^{1,2}(\R^N)}^{2}\right),
\end{equation}
where $\varepsilon>0$, $p\in (2,\frac{2N}{N-4})$, $u\in H^2(\R^N)$, and $K_{N,p,\varepsilon}>0$ is the best constant in \eqref{1.5}. First, inspired by the method of Fernandez-Jeanjean-Mandel-Maris \cite[Page 5 and (1.5)]{Fernandez-Jeanjean-Mandel-Maris2022JDE}, in our first result, we characterize the energy GSS and its mass in terms of the non-homogeneous inequality \eqref{1.5}.
\begin{theorem}\label{Theorem1.1}
	Let $p\in(2+\frac4N,2+\frac8N)$ and $\varepsilon>0$. Then $u$ is the energy GSS with critical mass $c_\varepsilon$ if and only if  $\|u\|_{L^2(\R^N)}^2=c_\varepsilon$ and $u$ is an optimizer of  \eqref{1.5}. Moreover,
	\begin{equation*}
		c_\varepsilon=\left(\frac2pK_{N,p,\varepsilon}\right)^{-\frac2{p-2}}.
	\end{equation*}
\end{theorem}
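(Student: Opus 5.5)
The approach is to link the sign of $m_\varepsilon(c)$ to the constant $K_{N,p,\varepsilon}$ in \eqref{1.5}, and then to use Theorem \ref{TheoremB}(3) to pin down $c_\varepsilon$ as the mass at which $m_\varepsilon$ changes sign. Write $Q(u):=\varepsilon\|\Delta u\|_2^2+\|\nabla u\|_2^2$, so that $E_\varepsilon(u)=\frac12 Q(u)-\frac1p\|u\|_p^p$. For $u\in S_c$, \eqref{1.5} gives
\[
E_\varepsilon(u)\geqslant \frac12 Q(u)\Bigl(1-\tfrac2pK_{N,p,\varepsilon}c^{\frac{p-2}2}\Bigr).
\]
Set $c_*:=(\frac2pK_{N,p,\varepsilon})^{-\frac2{p-2}}$. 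Then $E_\varepsilon>0$ on $S_c$ for every $c<c_*$, and $E_\varepsilon\geqslant0$ on $S_{c_*}$.

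Next I would establish the general facts $m_\varepsilon(c)\leqslant 0$ and $m_\varepsilon(c)=0$ for small $c$. These follow from the dilation $u_t(x)=t^{N/2}u(tx)$, which preserves mass. Under it, $\|\Delta u_t\|_2^2=t^4\|\Delta u\|_2^2$, $\|\nabla u_t\|_2^2=t^2\|\nabla u\|_2^2$ and $\|u_t\|_p^p=t^{N(p-2)/2}\|u\|_p^p$. Since $N(p-2)/2>2$, letting $t\to0$ sends $E_\varepsilon(u_t)\to0^+$, so $m_\varepsilon(c)\leqslant0$. Combined with the first paragraph, $m_\varepsilon(c)=0$ for $c\leqslant c_*$ and is not attained for $c<c_*$. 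Every minimizer is attained at a negative or zero level. I would then argue that $c_\varepsilon$ is exactly the threshold where $m_\varepsilon$ becomes zero and attained. For $c>c_*$, take a near-optimizer $v$ of \eqref{1.5} and rescale its amplitude so that it has mass $c$; then $\frac2pK c^{(p-2)/2}>1$ makes $E_\varepsilon(v)<0$, so $m_\varepsilon(c)<0$. Thus $m_\varepsilon(c)<0$ iff $c>c_*$.

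For the identification $c_\varepsilon=c_*$: Theorem \ref{TheoremB}(3) says ground states exist iff $c\geqslant c_\varepsilon$. Since none exist for $c<c_*$, we get $c_\varepsilon\geqslant c_*$. Conversely, I expect the standard argument (as in Bonheure et al.) to show that $m_\varepsilon(c)<0$ is attained, because negativity rules out vanishing and strict subadditivity rules out dichotomy. This gives existence for all $c>c_*$, hence $c_\varepsilon\leqslant c_*$. If I don't want to rely on that compactness, I can alternatively compute directly: by optimizing \eqref{1.5} in the dilation $t$ via AM–GM, $K_{N,p,\varepsilon}$ can be expressed through $C_{N,p}$, since $\sup_t \|u\|_p^p/(\varepsilon t^{a}\|\Delta u\|^2+t^{b}\|\nabla u\|^2)$ produces exactly the weighted product in \eqref{1.4}. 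Matching the constants with \eqref{c_epsilon} then gives $c_\varepsilon=c_*$. This constant bookkeeping is the main obstacle; I would carry out the scaling optimization carefully, with exponents $a=4-\frac{N(p-2)}2$ and $b=2-\frac{N(p-2)}2$ after normalizing.

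Finally, the equivalence. If $\|u\|_2^2=c_\varepsilon=c_*$ and $u$ optimizes \eqref{1.5}, then $E_\varepsilon(u)=\frac12Q(u)(1-1)=0=m_\varepsilon(c_\varepsilon)$, so $u$ is an energy GSS. Conversely, if $u$ is an energy GSS with mass $c_*$, then $m_\varepsilon(c_*)=0$ forces $\frac1p\|u\|_p^p=\frac12Q(u)=\frac1pK c_*^{(p-2)/2}Q(u)$. So equality holds in \eqref{1.5}, i.e. $u$ is an optimizer.
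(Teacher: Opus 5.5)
Your argument is correct. Its skeleton is the paper's: the factorization of $E_\varepsilon$ on $S_c$ as in \eqref{3.1}, and the closing two-line equivalence. The difference is in how you identify the threshold.

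The paper imports two facts from Bonheure et al.: $m_\varepsilon(c_\varepsilon)=0$ (Lemma \ref{Lemma_Bonheure}), and the existence of a minimizer $u_{c_\varepsilon}$ (Theorem \ref{TheoremB}). It observes that $u_{c_\varepsilon}$ attains the supremum of the quotient on $S_{c_\varepsilon}$, with value $\frac p2c_\varepsilon^{-\frac{p-2}2}$, and then extends to all masses by amplitude scaling.

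You instead define $c_*$ intrinsically from $K_{N,p,\varepsilon}$, prove by hand that $m_\varepsilon(c)<0$ exactly when $c>c_*$, and compute $K_{N,p,\varepsilon}$ in terms of $C_{N,p}$. That computation does close. Set $\theta:=\frac{N(p-2)-4}4\in(0,1)$. Weighted AM--GM gives $\min_{t>0}\bigl(\varepsilon t^{a}\|\Delta u\|_2^2+t^{b}\|\nabla u\|_2^2\bigr)=\theta^{-\theta}(1-\theta)^{-(1-\theta)}\varepsilon^{\theta}\|\Delta u\|_2^{2\theta}\|\nabla u\|_2^{2(1-\theta)}$. Hence $K_{N,p,\varepsilon}=\theta^{\theta}(1-\theta)^{1-\theta}\varepsilon^{-\theta}C_{N,p}$, and so $\bigl(\frac2pK_{N,p,\varepsilon}\bigr)^{-\frac2{p-2}}=C_{N,p}^{-\frac2{p-2}}\bigl(\frac p{2\theta}\bigr)^{\frac{2\theta}{p-2}}\bigl(\frac p{2(1-\theta)}\bigr)^{\frac{2(1-\theta)}{p-2}}\varepsilon^{\frac{2\theta}{p-2}}$. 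This is exactly \eqref{c_epsilon}, because $\frac p{2\theta}=\frac{2p}{N(p-2)-4}$ and $\frac p{2(1-\theta)}=\frac{2p}{8-N(p-2)}$.

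So you should present route (b) and drop route (a), whose compactness claim you leave unproved. The one-sided bound $c_\varepsilon\geqslant c_*$ then becomes superfluous as well.

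As for what each route buys: the paper's version is shorter and avoids constant bookkeeping, but it rests on the threshold lemma and on existence at $c_\varepsilon$. Yours uses from Theorem \ref{TheoremB} only the explicit value \eqref{c_epsilon} and needs no minimizer at all. It also gives the relation between $K_{N,p,\varepsilon}$ and $C_{N,p}$ directly, rather than as the a posteriori Remark after the theorem.

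The equality case of your AM--GM step adds a further point. The optimizers of \eqref{1.5} are precisely the optimizers of \eqref{1.4} that satisfy $(8-N(p-2))\varepsilon\|\Delta u\|_2^2=(N(p-2)-4)\|\nabla u\|_2^2$. This balance is not automatic: mass-preserving dilations keep a function optimal for \eqref{1.4} but destroy optimality for \eqref{1.5}.
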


\begin{remark}
	Note that, as a consequence of Theorems \ref{TheoremB} and \ref{Theorem1.1}, we have the following characterization of the best constant $K_{N,p}$ of inequality \eqref{1.5}:
	\begin{equation*}
		K_{N,p,\varepsilon}=\frac p2C_{N,p}\left[\frac{2p}{N(p-2)-4}\right]^{-\frac{N(p-2)-4}4}\left[\frac{2p}{8-N(p-2)}\right]^{-\frac{8-N(p-2)}{2(p-2)}}\varepsilon^{-\frac{N(p-2)-4}4}.
	\end{equation*}
\end{remark}

Then, we characterize $u_{c_\varepsilon}$ by means of the homogeneous Gagliardo-Nirenberg inequality \eqref{1.4} as follows.
\begin{theorem}\label{Theorem1.2}
	Let $p\in(2+\frac4N,2+\frac8N)$ and $\varepsilon>0$. Then $u$ is an energy GSS with critical mass $c_\varepsilon$ if and only if $u$ is an optimizer of inequality \eqref{1.4} with $\|u\|_{L^2(\R^N)}^2=c_\varepsilon$ defined in \eqref{c_epsilon}.
\end{theorem}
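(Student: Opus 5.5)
The plan is to link the energy to the inequality \eqref{1.4} through a weighted AM--GM inequality. I will use Theorem \ref{TheoremB}(3) and the variational facts behind it: $m_\varepsilon(c)=0$ for $0<c\leqslant c_\varepsilon$, $m_\varepsilon(c)<0$ for $c>c_\varepsilon$, and $m_\varepsilon(c_\varepsilon)$ is attained. If these are not already explicit in \cite{Bonheure-Casteras-dosSantos-Nascimento2018SIAM}, they follow from the estimate below together with the mass-preserving scaling $u_t:=t^{N/2}u(t\,\cdot)$, $t\to0$.

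Set $a:=\frac{N(p-2)-4}{4}$ and $b:=\frac{8-N(p-2)}{4}$, so that $a,b\in(0,1)$ and $a+b=1$. For $u\in S_c$ write $A:=\varepsilon\|u\|_{D^{2,2}}^2$ and $B:=\|u\|_{D^{1,2}}^2$. Then \eqref{1.4} reads
\begin{equation*}
\|u\|_{L^p}^p\leqslant C_{N,p}\,\varepsilon^{-a}c^{\frac{p-2}2}A^aB^b .
\end{equation*}
Weighted AM--GM gives $A^aB^b=(A/a)^a(B/b)^b\,a^ab^b\leqslant a^ab^b(A+B)$, with equality if and only if $A/a=B/b$. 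Hence
\begin{equation*}
E_\varepsilon(u)\geqslant\Big(\tfrac12-\tfrac1p C_{N,p}\,\varepsilon^{-a}c^{\frac{p-2}2}a^ab^b\Big)(A+B).
\end{equation*}
The bracket vanishes exactly when $c=c_\varepsilon$. I will check that this matches \eqref{c_epsilon}, where $\frac{2p}{N(p-2)-4}=\frac{p}{2a}$ and $\frac{2p}{8-N(p-2)}=\frac p{2b}$; this is routine bookkeeping. Consequently, for $c=c_\varepsilon$ we get $E_\varepsilon\geqslant0$ on $S_{c_\varepsilon}$. Moreover, $E_\varepsilon(u)=0$ if and only if two equalities hold simultaneously: equality in \eqref{1.4}, and $A/a=B/b$.

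\emph{Necessity.} Let $u$ be an energy GSS with $\|u\|_2^2=c_\varepsilon$. Then $E_\varepsilon(u)=m_\varepsilon(c_\varepsilon)=0$, so both equalities above hold. In particular $u$ is an optimizer of \eqref{1.4}.

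\emph{Sufficiency.} This is the delicate step. A direct computation shows that \eqref{1.4} is invariant under $u\mapsto\lambda u(\mu\,\cdot)$: the exponents on the right-hand side add up to $-N$, which matches the left-hand side. Now take an optimizer $u$ with $\|u\|_2^2=c_\varepsilon$ and look at $u_t=t^{N/2}u(t\,\cdot)$. Each $u_t$ is again an optimizer with the same mass. Along this curve $A$ scales like $t^4$ and $B$ like $t^2$, so the ratio $A/B$ ranges over $(0,\infty)$ and equals $a/b$ for exactly one $t_0$. For that $t_0$ both equalities hold, so $E_\varepsilon(u_{t_0})=0=m_\varepsilon(c_\varepsilon)$ and $u_{t_0}$ is an energy GSS. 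For any other $t$ we have $E_\varepsilon(u_t)>0$ strictly, so $u_t$ is \emph{not} a minimizer. The converse implication therefore holds only modulo this mass-preserving dilation, or under the additional normalization $\varepsilon\|\Delta u\|_2^2/\|\nabla u\|_2^2=a/b$.

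The main obstacle is this normalization issue in the sufficiency direction, not the inequality chain. To settle it I would first test whether the Euler--Lagrange equation of \eqref{1.4} forces the balance $A/a=B/b$ for an optimizer. It cannot do so by itself, since that equation is dilation invariant. So I would state and prove the sufficiency direction in the form \lq\lq $u$ optimizes \eqref{1.4}, $\|u\|_2^2=c_\varepsilon$, and $\varepsilon\|\Delta u\|_2^2\,b=\|\nabla u\|_2^2\,a$\rq\rq. Equivalently, for every such optimizer a unique dilation $u_{t_0}$ is an energy GSS. I would point out that this is the precise sense in which Theorem \ref{Theorem1.2} holds.
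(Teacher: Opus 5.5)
Your proposal is correct, and so is your objection to the \lq\lq if\rq\rq\ direction as literally stated. The functional $W_p$ is invariant under $u\mapsto\lambda u(\mu\,\cdot)$ for all $\lambda,\mu>0$; the computation \eqref{Wp_invariant} never uses the specific values of $\Lambda_{1,n},\Lambda_{2,n}$. Hence the dilations $Q_t:=t^{N/2}Q(t\,\cdot)$ of the paper's $Q$ are optimizers of \eqref{1.4} with $\|Q_t\|_2^2=c_\varepsilon$. Yet \eqref{Q_norms_nabla_p}--\eqref{Q_norms_Delta_p} give
\begin{equation*}
E_\varepsilon(Q_t)=\frac{\|Q\|_p^p}{p}\left(a\,t^4+b\,t^2-t^{2+2a}\right)>0=m_\varepsilon(c_\varepsilon)\qquad\mbox{for } t\neq1,
\end{equation*}
since $t^{2+2a}=(t^4)^a(t^2)^b<a\,t^4+b\,t^2$ for $t\neq1$. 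The paper's proof of this direction, Proposition \ref{Theorem4.2}, only treats the specific rescaling $Q=\lambda v(\mu\,\cdot)$ of Lemma \ref{Lemma_eq}. By construction this $Q$ satisfies \eqref{Q_norms_nabla_delta}, which is exactly your balance condition $b\,\varepsilon\|\Delta Q\|_2^2=a\|\nabla Q\|_2^2$. Applied to an optimizer $u$ of mass $c_\varepsilon$, that construction produces a rescaling of $u$ with the same mass, hence a mass-preserving dilation, namely your $u_{t_0}$. So the paper actually establishes your corrected formulation, and the subsection title \lq\lq any optimizer is an energy GSS\rq\rq\ overstates it.

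Your route also differs from the paper's and is lighter. The paper proceeds in three steps:
\begin{enumerate}
\item it obtains an optimizer by concentration--compactness (Lemmas \ref{Lemma_scaling}--\ref{Lemma_minimizer});
\item it derives the Euler--Lagrange equation and rescales to get \eqref{Q_norms_nabla_delta}--\eqref{Q_identity}, then inserts these into $W_p(Q)=C_{N,p}^{-1}$ to find $\|Q\|_2^2=c_\varepsilon$;
\item for necessity (Proposition \ref{Theorem4.4}) it minimizes the fibering map $h_\varepsilon(t)=E_\varepsilon(u_{c_\varepsilon,t})/t^2$ to obtain \eqref{7.1}--\eqref{7.2}, which are precisely your two equality conditions.
\end{enumerate}
Your weighted AM--GM lower bound for $E_\varepsilon$ on $S_c$ gives both directions at once through its equality cases. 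It also recovers the value \eqref{c_epsilon} and the fact $m_\varepsilon(c)=0$ for $c\leqslant c_\varepsilon$, and it makes the dilation degeneracy visible. It needs neither the existence of optimizers nor the Euler--Lagrange equation. The one thing it cannot supply is existence. Contrary to your first paragraph, attainment of $m_\varepsilon(c_\varepsilon)$ does not follow from the estimate and the scaling $t\to0$; it requires a compactness argument, either Theorem \ref{TheoremB} or Proposition \ref{Theorem4.1} followed by your balancing dilation. The equivalence itself does not depend on this.
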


We underline that, compared to the second-order case, e.g. \cite{Weinstein1983CMP}, we cannot relate anymore on the radial symmetry of the optimizers of inequality \eqref{1.4}. Indeed, the usual methods to prove symmetry, such as the moving-planes method or rearrangement techniques are not at disposal, since both are ultimately based on the maximum principle, which in general fails for higher-order problems. We refer the interested reader to \cite{GGS} for a comprehensive discussion on this topic, and to the more recent advances in \cite{CT,GRS}. Recently, Lenzmann and Sok proposed in \cite{Lenzmann-Sok2021IMRN}, see also \cite{Lenzmann-Weth2023JAM}, a new strategy by considering rearrangements in Fourier space, which is efficient, provided $p>2$ is an even integer or $p=+\infty$, so in general we cannot rely on this technique. As a result, in $H^2(\R^N)$ without symmetry information, it not elementary to avoid vanishing and dichotomy when dealing with the concentration-compactness principle, see e.g. \cite{FY}.

\vskip0.2truecm
Finally, in the spirit of \cite[Theorem 1.3]{Fernandez-Jeanjean-Mandel-Maris2022JDE}, we analyze the relationship between energy GSS and action GSS of \eqref{BS}: in the mass competition case we prove that for any $\varepsilon>0$ the two sets of solutions are the same.

\begin{theorem}\label{Theorem1.3}
	Let $p\in(2+\frac4N,2+\frac8N)$, $\varepsilon>0$, and
	\begin{equation}\label{omega}
		\omega(\varepsilon):=(p-2)\frac{N(p-2)-4}2\left(\frac{8-N(p-2)}2\right)^{-2}\|v\|_{L^2(\R^N)}^{-2}\varepsilon^{-1},
	\end{equation}
	where $v$ is an optimizer of inequality \eqref{1.4} such that $\|\nabla v\|_{L^2(\R^N)}=\|\Delta v\|_{L^2(\R^N)}=1$, as in Lemma \ref{Lemma_minimizer}. Then $u$ is an energy GSS for critical mass $c_\varepsilon$ whose Lagrange multiplier is $\omega=\omega(\varepsilon)$  if and only if $u$ is an action GSS with $\omega=\omega(\varepsilon)$.
\end{theorem}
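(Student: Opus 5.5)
The plan is to compare both families of solutions through the level $\omega c_\varepsilon/2$, which will turn out to be $\sigma_{\omega(\varepsilon)}$. Throughout I use two identities valid for any weak solution $w\in H^2(\R^N)$ of \eqref{BS} with frequency $\omega$. Write
\[
A=\varepsilon\|\Delta w\|_2^2,\quad B=\|\nabla w\|_2^2,\quad P=\|w\|_p^p,\quad \mu=\|w\|_2^2,\quad \gamma=\tfrac{N(p-2)}{4}\in(1,2).
\]
The first identity is the Nehari identity $A+B+\omega\mu=P$. The second is the Pohozaev-type identity $2A+B=\frac{2\gamma}{p}P$, obtained from the mass-preserving dilation $t^{N/2}w(tx)$ (equivalently, from a Pohozaev multiplier argument for solutions).

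\textbf{Step 1: energy GSS at critical mass.} Let $u$ be an energy GSS with $\|u\|_2^2=c_\varepsilon$. By Theorems \ref{TheoremB} and \ref{Theorem1.2}, $m_\varepsilon(c_\varepsilon)=0$ and $u$ is an optimizer of \eqref{1.4}. Hence $E_\varepsilon(u)=0$, i.e. $A+B=\frac2pP$. Combining this with the Pohozaev identity gives
\[
A=\frac{2(\gamma-1)}{p}P,\qquad B=\frac{2(2-\gamma)}{p}P .
\]
The Nehari identity then gives $\omega c_\varepsilon=\frac{p-2}{p}P$.

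Next I write $u=a\,v(bx)$ with $v$ the normalized optimizer from Lemma \ref{Lemma_minimizer}. This is possible since optimizers of \eqref{1.4} are invariant under $v\mapsto a v(b\,\cdot)$, and the two free parameters can be fitted to the two prescribed ratios. The ratio $A/B$ determines $b^2$ as $\frac{\gamma-1}{2-\gamma}\varepsilon^{-1}$. Inserting this into $\omega=\frac{p-2}{p}P/c_\varepsilon=\frac{p-2}{2(2-\gamma)}B/c_\varepsilon$, with $B/c_\varepsilon=b^2\|\nabla v\|_2^2/\|v\|_2^2$, should reproduce exactly \eqref{omega}. So every energy GSS with critical mass has multiplier $\omega(\varepsilon)$, and
\[
I_{\omega(\varepsilon)}(u)=\tfrac12\,\omega(\varepsilon)\,c_\varepsilon .
\]
Since $u\in\mathcal N$, this also yields $\sigma_{\omega(\varepsilon)}\le \frac12\omega(\varepsilon)c_\varepsilon$.

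\textbf{Step 2: lower bound for every nontrivial solution.} Let $w$ be an action GSS for $\omega=\omega(\varepsilon)$, which exists by the standard theory recalled in the introduction. Since $w$ is a critical point of $I_\omega$, both identities hold. Using them,
\[
I_\omega(w)=\Big(\tfrac12-\tfrac1p\Big)P,\qquad E_\varepsilon(w)=\frac{P}{p}\cdot\frac{\gamma-2}{2}+\frac{A}{2}.
\]
I then apply \eqref{1.4} to $w$, with $A$ and $B$ expressed through $P$ and one free parameter, say $s=A/P\in\big(0,\frac{\gamma}{p}\big)$. This gives a lower bound $\mu\ge g(s)P^{\,?}$ on the mass, whose exponents follow from homogeneity. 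The goal is to show
\[
\Big(\tfrac12-\tfrac1p\Big)P\ \ge\ \tfrac12\,\omega(\varepsilon)\,c_\varepsilon,
\]
with equality only if $s$ takes the value from Step 1 and \eqref{1.4} is an equality. Concretely, I would eliminate $P$ between the Nehari identity $\omega\mu=P-A-B$ and the GN bound, and reduce everything to a one-variable inequality in $s$ minimized exactly at $s=\frac{2(\gamma-1)}{p}$.

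This optimization is where I expect the main obstacle. The exponents in \eqref{1.4} must combine with $\omega(\varepsilon)$ as defined through $\|v\|_2$, and the equality analysis must be sharp. If the direct route is messy, a fallback is to use $I_\omega(w)=E_\varepsilon(w)+\frac{\omega}{2}\mu\ge m_\varepsilon(\mu)+\frac{\omega}{2}\mu$ together with the explicit scaling form of $m_\varepsilon(\mu)$ for $\mu\ge c_\varepsilon$, which is obtained by the same GN computation. One then checks that $\mu\mapsto m_\varepsilon(\mu)+\frac{\omega(\varepsilon)}2\mu$ is minimized at $\mu=c_\varepsilon$.

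\textbf{Step 3: conclusion.} Step 2 gives $\sigma_{\omega(\varepsilon)}\ge\frac12\omega(\varepsilon)c_\varepsilon$, so equality holds with Step 1. Hence every energy GSS at critical mass lies in $\mathcal N$ at level $\sigma_{\omega(\varepsilon)}$, so it is an action GSS. Conversely, an action GSS $w$ attains equality in Step 2. This forces equality in \eqref{1.4}, $E_\varepsilon(w)=0$ and $\|w\|_2^2=c_\varepsilon$, via the formula for $c_\varepsilon$ in \eqref{c_epsilon}. By Theorem \ref{Theorem1.2}, $w$ is then an energy GSS with critical mass, and its multiplier is $\omega(\varepsilon)$ by construction.
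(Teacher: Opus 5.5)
Your route differs from the paper's, and it does go through. Three points need fixing.

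\textbf{The representation in Step 1.} You cannot write an arbitrary energy GSS at critical mass as $a\,v(b\,\cdot)$ with the \emph{fixed} $v$ of Lemma~\ref{Lemma_minimizer}. Rescaling an optimizer only produces \emph{some} normalized optimizer $\tilde v$, and your computation then yields the multiplier with $\|\tilde v\|_2$ in place of $\|v\|_2$. Uniqueness of optimizers of \eqref{1.4} is not known, so the claim that every critical-mass energy GSS has multiplier $\omega(\varepsilon)$ is not proved.

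You do not need that claim. The multiplier is part of the hypothesis of the forward implication. For the bound $\sigma_{\omega(\varepsilon)}\leqslant\frac12\omega(\varepsilon)c_\varepsilon$ one such solution suffices, namely $Q$ from Lemma~\ref{Lemma_eq} and Proposition~\ref{Theorem4.2}, which is by construction a rescaling of this very $v$. There is also a slip: from Pohozaev one gets $E_\varepsilon(w)=\frac{\gamma-1}{p}P-\frac A2$, not $\frac{\gamma-2}{2p}P+\frac A2$.

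\textbf{The optimization you flagged closes cleanly.} The exponents of $\|\Delta w\|_2^2$ and $\|\nabla w\|_2^2$ in \eqref{1.4} are $\gamma-1$ and $2-\gamma$, which sum to $1$. Inserting $A=sP$ and $B=(\frac{2\gamma}{p}-2s)P$ therefore makes $P$ cancel:
\[
\mu^{\frac{p-2}{2}}\geqslant\frac{\varepsilon^{\gamma-1}}{C_{N,p}\,s^{\gamma-1}\left(\frac{2\gamma}{p}-2s\right)^{2-\gamma}}=:g(s)^{\frac{p-2}{2}} .
\]
Nehari and Pohozaev give $P=\omega\mu/(1+s-\frac{2\gamma}{p})$, so $P\geqslant\omega F(s)$ with $F(s):=g(s)/(1+s-\frac{2\gamma}{p})$. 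Moreover,
\[
\frac{\dd}{\dd s}\log F(s)=\frac{2}{p-2}\left(-\frac{\gamma-1}{s}+\frac{2(2-\gamma)}{\frac{2\gamma}{p}-2s}\right)-\frac{1}{1+s-\frac{2\gamma}{p}} .
\]
This is a sum of strictly increasing functions on the admissible set $0<s<\frac{\gamma}{p}$, $1+s-\frac{2\gamma}{p}>0$, and it vanishes at $s_0=\frac{2(\gamma-1)}{p}$. By \eqref{c_epsilon}, $g(s_0)=c_\varepsilon$, hence $F(s_0)=\frac{p}{p-2}c_\varepsilon$. So every nontrivial solution satisfies $I_\omega(w)=\frac{p-2}{2p}P\geqslant\frac12\omega c_\varepsilon$, with equality iff $s=s_0$ and $w$ optimizes \eqref{1.4}. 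In that case $\|w\|_2^2=c_\varepsilon$ and $E_\varepsilon(w)=0$, so $w$ is an energy GSS directly by Lemma~\ref{Lemma_Bonheure}.

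\textbf{Drop the fallback.} For $\mu<c_\varepsilon$ one has $m_\varepsilon(\mu)+\frac\omega2\mu=\frac\omega2\mu<\frac\omega2 c_\varepsilon$. That function is therefore not minimized at $c_\varepsilon$ unless you already know $\|w\|_2^2\geqslant c_\varepsilon$. Also, \eqref{1.4} gives no closed form for $m_\varepsilon(\mu)$ when $\mu>c_\varepsilon$.

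\textbf{Comparison with the paper.} The paper never estimates $\sigma_\omega$ through \eqref{1.4}. Its Lemma~\ref{Lemma5.1} shows that every solution strictly maximizes $I_\omega$ along the mass-changing fiber $t\mapsto t^Nw(t\,\cdot)$. It slides an action GSS to mass $c_\varepsilon$ and uses only $E_\varepsilon\geqslant m_\varepsilon(c_\varepsilon)=0$ on $S_{c_\varepsilon}$; strictness of the fiber maximum then forces $\|w\|_2^2=c_\varepsilon$. This is exactly the device that repairs your fallback.

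Your argument combines the same two identities but feeds them into the sharp inequality \eqref{1.4} and the explicit value \eqref{c_epsilon}, reducing everything to a one-variable convex problem. It is more computational, but it shows directly that equality forces $w$ to be an optimizer of \eqref{1.4}, linking Theorem~\ref{Theorem1.3} to Theorem~\ref{Theorem1.2} without the fiber lemma. The paper's argument is softer and needs only the variational fact $m_\varepsilon(c_\varepsilon)=0$. Both rely on the Pohozaev identity and, for the forward implication, on the existence of an action GSS.
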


The results of Theorems \ref{Theorem1.1}-\ref{Theorem1.3} may be summarized with the help of the following diagram.

\begin{center}
    \begin{tikzpicture}[>=stealth, line width=1pt]
    
    \node at (-3,2.5) {\textbf{Optimizer of \eqref{1.4}}};
    \node at ( 3.1,2.5) {\textbf{Optimizer of \eqref{1.5}}};
    
    \node at (-3,0) {\textbf{Energy GSS}};
    \node at (-3,-0.45) {$\boldsymbol{c = c_\varepsilon}$};
    
    \node at ( 3,0) {\textbf{Action GSS}};
    \node at ( 3,-0.45) {$\boldsymbol{\omega = \omega(\varepsilon)}$};
    
    \draw[->] (-1,2.5) -- (1,2.5);
    \draw[->] (1,2.2) -- (-1,2.2);
    
    \draw[->] (-1,0.2) -- (1,0.2);
    \draw[->] (1,-0.1) -- (-1,-0.1);
    
    \draw[->] (-3,2.1) -- (-3,0.3);
    \draw[->] (-2.7,0.3) -- (-2.7,2.1);
    
    \draw[->] (3,2.1) -- (3,0.3);
    \draw[->] (2.7,0.3) -- (2.7,2.1);
    
    \end{tikzpicture}
\end{center}

\paragraph{\textbf{Notation}} The norm of the Lebesgue space $L^p(\R^N)$, $p\geqslant1$, is often denoted by $\|\cdot\|_p$, while $\|\cdot\|_{D^{k,2}}:=\|\cdot\|_{D^{k,2}(\R^\N)}$ for $k\in\{1,2\}$.

\section{Preliminaries}

We start by proving the inequality \eqref{1.4} which is fundamental in our work, by combining the Gagliardo-Nirenberg inequalities \eqref{1.2} and \eqref{1.3}. Although this result has been proved \cite[Corollary 2.2]{Bonheure-Casteras-dosSantos-Nascimento2018SIAM}, we give it also here with our notation for the sake of a clearer exposition.
\begin{lemma}\label{Lemma2.1}
	For any $p\in(2+\frac4N,2+\frac8N)$, there exists a constant $C_{N,p}>0$ such that
	\begin{equation*}
		\|u\|_{L^p(\R^N)}^p\leqslant C_{N,p}\|u\|_{D^{2,2}(\R^N)}^{\frac{N(p-2)-4}2}\|u\|_{D^{1,2}(\R^N)}^{\frac{8-N(p-2)}2}\|u\|_{L^2(\R^N)}^{p-2}\,,
	\end{equation*}
	for all $u\in H^2(\R^N)$.
\end{lemma}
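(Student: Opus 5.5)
The plan is to interpolate the $L^p$ norm between two exponents, one on each side of $p$, chosen so that \eqref{1.2} and \eqref{1.3} are each applied exactly at their mass-critical exponents. Concretely, set $p_1:=2+\frac4N$ and $p_2:=2+\frac8N$. Since $p\in(p_1,p_2)$, we can write $\frac1p=\frac{\theta}{p_1}+\frac{1-\theta}{p_2}$ for a unique $\theta\in(0,1)$. Hölder's inequality then gives
\begin{equation*}
\|u\|_p\leqslant\|u\|_{p_1}^{\theta}\|u\|_{p_2}^{1-\theta}.
\end{equation*}
Both exponents are admissible. Indeed, $p_1<\frac{2N}{N-2}$ for every $N$, and $p_2<\frac{2N}{N-4}$ when $N\geqslant5$; for $N\leqslant4$ there is no upper restriction. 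In low dimensions \eqref{1.2} should be understood in its standard form valid for all $N$.

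Next, apply \eqref{1.2} with exponent $p_1$. There $\frac{N(p_1-2)}2=2$ and $p_1-2=\frac4N$, so
\begin{equation*}
\|u\|_{p_1}^{p_1}\leqslant C\|\nabla u\|_2^2\|u\|_2^{4/N}.
\end{equation*}
Similarly, apply \eqref{1.3} with $q=p_2$. There $\frac{N(p_2-2)}4=2$ and $p_2-2=\frac8N$, so
\begin{equation*}
\|u\|_{p_2}^{p_2}\leqslant C\|\Delta u\|_2^2\|u\|_2^{8/N}.
\end{equation*}
Raise the Hölder inequality to the power $p$ and substitute these two bounds. The result has the form
\begin{equation*}
\|u\|_p^p\leqslant C\|\Delta u\|_2^{a}\|\nabla u\|_2^{b}\|u\|_2^{d},
\end{equation*}
with
\begin{equation*}
a=\frac{2p(1-\theta)}{p_2},\qquad b=\frac{2p\theta}{p_1},\qquad d=\frac4N\cdot\frac{p\theta}{p_1}+\frac8N\cdot\frac{p(1-\theta)}{p_2}.
\end{equation*}

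The remaining step is pure bookkeeping: verify that these exponents match those in the statement. From the definition of $\theta$,
\begin{equation*}
\frac{p\theta}{p_1}=\frac{\frac1p-\frac1{p_2}}{\frac1{p_1}-\frac1{p_2}}\cdot\frac{p}{p_1}=\frac{8-N(p-2)}4,\qquad \frac{p(1-\theta)}{p_2}=\frac{N(p-2)-4}4,
\end{equation*}
and I will confirm these identities by direct algebra. They yield $b=\frac{8-N(p-2)}2$ and $a=\frac{N(p-2)-4}2$. The $L^2$ exponent is then
\begin{equation*}
d=\frac4N\cdot\frac{8-N(p-2)}4+\frac8N\cdot\frac{N(p-2)-4}4=p-2.
\end{equation*}
As a consistency check, scaling $u(\lambda\,\cdot)$ and multiplying $u$ by a constant both force these exponents, since they satisfy $a+b+d=p$ and $2a+b=\frac{N(p-2)}2$.

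No real obstacle is expected. The only delicate point is the arithmetic for $\theta$ together with the admissibility of the endpoint exponents in low dimensions. Finally, $C_{N,p}$ is defined as the best constant, that is, the supremum of the corresponding quotient over $u\in H^2\setminus\{0\}$. It is finite by the bound just proved, and positive trivially.
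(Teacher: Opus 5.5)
Your proposal is correct and is essentially the paper's proof: Hölder interpolation of $\|u\|_p^p$ between the mass-critical exponents $2+\frac4N$ and $2+\frac8N$, then \eqref{1.2} and \eqref{1.3} applied at those exponents, which produces the same exponents $\frac{N(p-2)-4}{2}$, $\frac{8-N(p-2)}{2}$, $p-2$. Your remark that \eqref{1.2} must be taken in its form valid for all $N$ (the paper states it only for $N\geqslant3$) is a small point of extra care that the paper omits.
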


\begin{proof}
	By the H\"{o}lder inequality for $p\in(2+\frac4N,2+\frac8N)$, we have
	\begin{equation*}
		\int_{\R^N}|u|^p\dd x\leqslant\left(\int_{\R^N}|u|^{2+\frac4N}\dd x\right)^{\frac{(2+\frac8N)-p}{\frac4N}}\left(\int_{\R^N}|u|^{2+\frac8N}\dd x\right)^{\frac{p-(2+\frac4N)}{\frac4N}},
	\end{equation*}
	and therefore using \eqref{1.2} and \eqref{1.3} with exponents $p=2+\frac4N$ and  $q=2+\frac8N$, respectively, we infer
	\begin{equation*}
		\begin{split}
			\int_{\R^N}|u|^p\dd x&\leqslant C\left(\|u\|_{D^{1,2}(\R^N)}^2\|u\|_{L^2(\R^N)}^{\frac4N}\right)^{\frac{(2+\frac8N)-p}{\frac4N}}\left(\|u\|_{D^{2,2}(\R^N)}^2\|u\|_{L^2(\R^N)}^{\frac8N}\right)^{\frac{p-(2+\frac4N)}{\frac4N}}\\			&=C\|u\|_{D^{2,2}(\R^N)}^{\frac{N(p-2)-4}2}\|u\|_{D^{1,2}(\R^N)}^{\frac{8-N(p-2)}2}\|u\|_{L^2(\R^N)}^{p-2}\,.
		\end{split}
	\end{equation*}
\end{proof}

In the next lemma we collect some results from \cite[Lemmas 2.4 and 2.6]{Bonheure-Casteras-dosSantos-Nascimento2018SIAM} which will be used in our analysis.
\begin{lemma}\label{Lemma_Bonheure}
	Let $p\in(2+\frac4N,2+\frac8N)$ and $\varepsilon>0$. For all $c>0$ one has $m_\varepsilon(c)\leq0$, where $m_\varepsilon(c)$ is defined in \eqref{m_eps_c}. Moreover, $m_\varepsilon(c)=0$ if and only if $c\leqslant c_\varepsilon$, where $c_\varepsilon$ is defined in \eqref{c_epsilon}.
\end{lemma}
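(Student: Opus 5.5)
The plan is to prove the three assertions by scaling arguments combined with the mixed inequality \eqref{1.4} of Lemma~\ref{Lemma2.1}. Throughout, write $X:=\|\Delta u\|_2^2$, $Y:=\|\nabla u\|_2^2$, $D:=\|u\|_p^p$, and set $a:=\frac{N(p-2)-4}{2}$, $b:=\frac{8-N(p-2)}{2}$. In the mass competition regime $a,b>0$ and $a+b=2$.

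\textbf{Step 1: $m_\varepsilon(c)\leqslant 0$.} Fix $u\in S_c$ and use the mass-preserving dilation $u_t(x):=t^{N/2}u(tx)$. Then
\begin{equation*}
E_\varepsilon(u_t)=\frac{\varepsilon t^4}{2}X+\frac{t^2}{2}Y-\frac{t^{N(p-2)/2}}{p}D\longrightarrow 0 \quad\text{as } t\to0^+ .
\end{equation*}
Hence $m_\varepsilon(c)\leqslant 0$ for every $c>0$.

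\textbf{Step 2: $m_\varepsilon(c)\geqslant0$ for $c\leqslant c_\varepsilon$.} For $u\in S_c$, \eqref{1.4} gives $D\leqslant C_{N,p}X^{a/2}Y^{b/2}c^{(p-2)/2}$. Write
\begin{equation*}
\varepsilon X+Y=\frac a2\Big(\frac{2\varepsilon X}{a}\Big)+\frac b2\Big(\frac{2Y}{b}\Big).
\end{equation*}
The weights $a/2,b/2$ sum to one, so weighted AM--GM yields
\begin{equation*}
\varepsilon X+Y\geqslant\Big(\frac{2\varepsilon}{a}\Big)^{a/2}\Big(\frac2b\Big)^{b/2}X^{a/2}Y^{b/2}.
\end{equation*}
Consequently $E_\varepsilon(u)\geqslant X^{a/2}Y^{b/2}\big[\tfrac12(\tfrac{2\varepsilon}{a})^{a/2}(\tfrac2b)^{b/2}-\tfrac{C_{N,p}}{p}c^{(p-2)/2}\big]$. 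This is nonnegative precisely when $c^{(p-2)/2}$ lies below an explicit threshold. Solving for $c$ should reproduce \eqref{c_epsilon}; I expect only bookkeeping of the factors $p$ and $2$ here. Together with Step 1, this gives $m_\varepsilon(c)=0$ for $c\leqslant c_\varepsilon$.

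\textbf{Step 3: $m_\varepsilon(c)<0$ for $c>c_\varepsilon$.} This is the only delicate part. Take $w$ with Gagliardo--Nirenberg quotient $D/(X^{a/2}Y^{b/2}c^{(p-2)/2})$ as close to $C_{N,p}$ as desired. The quotient is invariant under $w\mapsto s\,w(\lambda\,\cdot)$. I would first rescale the amplitude so that the mass equals $c$. Then apply $w_\lambda=\lambda^{N/2}w(\lambda x)$, which preserves the mass and multiplies $X/Y$ by $\lambda^2$, to force $\frac{2\varepsilon X}{a}=\frac{2Y}{b}$. With this choice the AM--GM step above is an equality. The bracket then becomes negative for $c>c_\varepsilon$ once the quotient is close enough to $C_{N,p}$, so $E_\varepsilon(w_\lambda)<0$. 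No existence of an optimizer is needed, only near-optimizers, which exist by definition of the best constant.

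Combining Steps 2 and 3 gives the equivalence $m_\varepsilon(c)=0\iff c\leqslant c_\varepsilon$. The main obstacle is keeping the constants straight so that the threshold from Step 2 matches \eqref{c_epsilon} exactly; the scaling freedom in Step 3 is what makes that threshold sharp.
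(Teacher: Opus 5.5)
Your proof is correct, and it takes a genuinely different route from the paper. The paper gives no argument for this lemma: it imports it, together with the explicit value \eqref{c_epsilon}, from \cite[Lemmas 2.4 and 2.6]{Bonheure-Casteras-dosSantos-Nascimento2018SIAM}. Your argument is self-contained and uses only the finiteness of $C_{N,p}$ from Lemma~\ref{Lemma2.1}:
the mass-preserving dilation gives $m_\varepsilon(c)\leqslant 0$;
weighted AM--GM combined with \eqref{1.4} gives $E_\varepsilon\geqslant 0$ on $S_c$ below a threshold;
near-optimizers of \eqref{1.4}, dilated so that $aY=b\varepsilon X$, show the threshold is sharp.
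This last step does not need existence of optimizers, which the paper only obtains later via concentration--compactness.

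The bookkeeping you left open does close. Since $a+b=2$,
\[
\frac{p}{2C_{N,p}}\Big(\frac{2\varepsilon}{a}\Big)^{\frac a2}\Big(\frac2b\Big)^{\frac b2}=\frac{p}{C_{N,p}}\,\varepsilon^{\frac a2}a^{-\frac a2}b^{-\frac b2}.
\]
Rewriting \eqref{c_epsilon} with $\frac{2p}{N(p-2)-4}=\frac pa$ and $\frac{2p}{8-N(p-2)}=\frac pb$ gives exactly this value for $c_\varepsilon^{\frac{p-2}2}$. In Step 3 you also implicitly use $X,Y>0$ to choose the dilation; this holds for every $w\in H^2(\R^N)\setminus\{0\}$.

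Beyond replacing a citation, your inequality chain is informative. Read at $c=c_\varepsilon$, it shows that $u\in S_{c_\varepsilon}$ has $E_\varepsilon(u)=0$ if and only if both steps are equalities, i.e.\ $u$ is an optimizer of \eqref{1.4} with $aY=b\varepsilon X$. This has three consequences:
\begin{itemize}
\item it recovers Proposition~\ref{Theorem4.4}, including \eqref{7.1}--\eqref{7.2}, without the fibering map $h_\varepsilon$;
\item it shows directly, without appealing to Theorem~B, that an optimizer of \eqref{1.4} rescaled to mass $c_\varepsilon$ and dilated to satisfy $aY=b\varepsilon X$ is an energy GSS;
\item it shows the ratio condition cannot be dropped, since an optimizer of \eqref{1.4} with mass $c_\varepsilon$ and $aY\neq b\varepsilon X$ has strictly positive energy.
\end{itemize}
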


\section{First characterization of the energy GSS with critical mass \texorpdfstring{$c_\varepsilon$}{c_epsilon}:\\ Proof of Theorem \ref{Theorem1.1}}

In this section, inspired by the method of Fernandez-Jeanjean-Mandel-Maris \cite[Page 5 and (1.5)]{Fernandez-Jeanjean-Mandel-Maris2022JDE}, we prove Theorem \ref{Theorem1.1}, i.e., the characterization of the energy GSS with critical mass $c_\varepsilon$ via the non-homogeneous inequality \eqref{1.5}.
\vskip0.2truecm

We first note that, for $p\in(2,\frac{2N}{N-4})$ and $\varepsilon>0$, for any $u\in H^2(\R^N)$ with $\|u\|_{L^2(\R^N)}^2=c$ one can rewrite the energy $E_\varepsilon$ as
\begin{equation}\label{3.1}
	E_\varepsilon(u)=\frac12\left(\varepsilon\|u\|_{D^{2,2}}^2+\|u\|_{D^{1,2}}^2\right)\!\left(1-\frac2pc^{\frac{p-2}2}\frac{\,\|u\|_p^p}{\|u\|_2^{p-2}\left(\varepsilon\|u\|_{D^{2,2}}^2+\|u\|_{D^{1,2}}^2\right)}\right).
\end{equation}
Recall from \eqref{1.5} that
\begin{equation*}
	K_{N,p,\varepsilon}=\sup_{u\in H^2\backslash\{0\}}\frac{\|u\|_p^p}{\|u\|_2^{p-2}\left(\varepsilon\|u\|_{D^{2,2}}^{2}+\|u\|_{D^{1,2}}^{2}\right)}\,,
\end{equation*}
for which $0<K_{N,p,\varepsilon}<+\infty$ clearly holds.

\begin{lemma}\label{Lemma3.2}
	Let $p\in(2+\frac4N,2+\frac8N)$ and $\varepsilon>0$. Then
	\begin{equation*}
		K_{N,p,\varepsilon}=\frac{p}2c_\varepsilon^{-\frac{p-2}2}.
	\end{equation*}
	Moreover, the energy GSS $u_{c_\varepsilon}\in H^2\backslash\{0\}$ is an optimizer of inequality \eqref{1.5}.
\end{lemma}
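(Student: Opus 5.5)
We argue directly from the rewriting \eqref{3.1} of the energy together with Lemma \ref{Lemma_Bonheure} and Theorem \ref{TheoremB}; no new compactness is needed. Throughout, write
\[
Q(u):=\frac{\|u\|_p^p}{\|u\|_2^{p-2}\left(\varepsilon\|u\|_{D^{2,2}}^2+\|u\|_{D^{1,2}}^2\right)},
\]
so that $K_{N,p,\varepsilon}=\sup Q$.

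\emph{Step 1: the bound $K_{N,p,\varepsilon}\leq\frac p2c_\varepsilon^{-\frac{p-2}2}$.} Fix any $u\in H^2\backslash\{0\}$. The quotient $Q$ is invariant under multiplication by constants, so we may rescale $u$ to have $\|u\|_2^2=c_\varepsilon$. By Lemma \ref{Lemma_Bonheure}, $m_\varepsilon(c_\varepsilon)=0$, hence $E_\varepsilon(u)\geq0$. The prefactor $\frac12(\varepsilon\|u\|_{D^{2,2}}^2+\|u\|_{D^{1,2}}^2)$ in \eqref{3.1} is strictly positive. Therefore the bracket in \eqref{3.1} is nonnegative, that is, $\frac2pc_\varepsilon^{\frac{p-2}2}Q(u)\leq1$. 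Taking the supremum over $u$ gives the claimed upper bound.

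\emph{Step 2: equality, attained at $u_{c_\varepsilon}$.} By Theorem \ref{TheoremB}(3), an energy GSS $u_{c_\varepsilon}\in S_{c_\varepsilon}$ exists, and $E_\varepsilon(u_{c_\varepsilon})=m_\varepsilon(c_\varepsilon)=0$. Since $u_{c_\varepsilon}\neq0$ and the prefactor is positive, the bracket in \eqref{3.1} must vanish at $u_{c_\varepsilon}$. This means
\[
Q(u_{c_\varepsilon})=\frac p2c_\varepsilon^{-\frac{p-2}2}.
\]
Combined with Step 1, $K_{N,p,\varepsilon}=\frac p2c_\varepsilon^{-\frac{p-2}2}$, and $u_{c_\varepsilon}$ attains the supremum, so it is an optimizer of \eqref{1.5}.

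\emph{Main obstacle.} The argument is elementary once the cited facts are in place. The only delicate input is existence at exactly the threshold mass $c=c_\varepsilon$, which is not the case $c>c_\varepsilon$. This is precisely where the closed interval $c\geq c_\varepsilon$ in Theorem \ref{TheoremB}(3) is used, together with $m_\varepsilon(c_\varepsilon)=0$ from Lemma \ref{Lemma_Bonheure}. If one wanted to avoid citing existence at $c_\varepsilon$, the fallback would be to take GSS $u_c$ for $c\downarrow c_\varepsilon$ and pass to the limit. That route would require concentration-compactness, which the paper notes is hard here. We therefore rely on the cited theorem.
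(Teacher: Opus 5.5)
Your proof is correct and follows essentially the same route as the paper. Both arguments reduce to mass $c_\varepsilon$ via the zero-homogeneity of the quotient, use $m_\varepsilon(c_\varepsilon)=0$ in \eqref{3.1} for the upper bound, and obtain attainment from the energy GSS $u_{c_\varepsilon}$ supplied by Theorem \ref{TheoremB}. Your ordering (the inequality $E_\varepsilon\geqslant0$ on $S_{c_\varepsilon}$ first, then equality at $u_{c_\varepsilon}$) is slightly cleaner than the paper's. The paper passes directly from $\inf_{S_{c_\varepsilon}}E_\varepsilon=0$ to the infimum of the bracket being $0$, and that step tacitly needs the minimizer you invoke explicitly.
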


\begin{proof}
	We aim at proving
	\begin{equation*}
		\sup_{u\in H^2\backslash\{0\}}\frac{\|u\|_p^p}{\|u\|_2^{p-2}\left(\varepsilon\|u\|_{D^{2,2}}^{2}+\|u\|_{D^{1,2}}^{2}\right)}=\frac p2c_\varepsilon^{-\frac{p-2}2}.
	\end{equation*}
	We distinguish two cases:
	\begin{enumerate}
		\item [(1)] $\|u\|_2^2=:c=c_\varepsilon$;
		\item [(2)] $\|u\|_2^2=:c\not=c_\varepsilon$.
	\end{enumerate}

	(1) For $c=c_\varepsilon$, by Lemma \ref{Lemma_Bonheure} we know that
	\begin{equation*}
		\inf_{u\in S_{c_\varepsilon}}E_\varepsilon(u)=m_\varepsilon(c_\varepsilon)=0\,.
	\end{equation*}
	Then from \eqref{3.1}, we get that
	\begin{equation*}
		\begin{split}
			0&=\inf_{u\in S_{c_\varepsilon}}\left[1-\frac2pc_\varepsilon^{\frac{p-2}2}\frac{\|u\|_p^p}{\|u\|_2^{p-2}\left(\varepsilon\|\Delta u\|_2^2+\|\nabla u\|_2^2\right)}\right]\\
			&=1-\frac2pc_\varepsilon^{\frac{p-2}2}\sup_{u\in S_{c_\varepsilon}}\frac{\|u\|_p^p}{\|u\|_2^{p-2}\left(\varepsilon\|\Delta u\|_2^2+\|\nabla u\|_2^2\right)}\,,
		\end{split}
	\end{equation*}
	which shows that
	\begin{equation}\label{3.3}
		\sup_{u\in S_{c_\varepsilon}}\frac{\|u\|_p^p}{\|u\|_2^{p-2}\left(\varepsilon\|\Delta u\|_2^2+\|\nabla u\|_2^2\right)}=\frac p2c_\varepsilon^{-\frac{p-2}2}.
	\end{equation}
	From Theorem \ref{TheoremB} and Lemma \ref{Lemma_Bonheure}, we know that there exists $u_{c_\varepsilon}\in H^2(\R^N)\backslash\{0\}$ with $\|u_{c_\varepsilon}\|_2^2=c_\varepsilon$ such that
	\begin{equation*}
		E_\varepsilon(u_{c_\varepsilon})=m_\varepsilon(c_\varepsilon)=0\,.
	\end{equation*}
	It follows from \eqref{3.1} and \eqref{3.3} that
	\begin{equation*}
		\frac{\|u_{c_\varepsilon}\|_p^p}{\|u_{c_\varepsilon}\|_2^{p-2}\left(\varepsilon\|\Delta u_{c_\varepsilon}\|_2^2+\|\nabla u_{c_\varepsilon}\|_2^2\right)}=\frac p2c_\varepsilon^{-\frac{p-2}2}=\sup_{u\in S_{c_\varepsilon}}\frac{\|u\|_p^p}{\|u\|_2^{p-2}\left(\varepsilon\|\Delta u\|_2^2+\|\nabla u\|_2^2\right)}\,.
	\end{equation*}
	
	(2) Now we consider the case $c\not=c_\varepsilon$. For any $v\in H^2(\R^N)\backslash\{0\}$ with $\|v\|_2^2=c\not=c_\varepsilon$, we define
	\begin{equation*}
		u:=\sqrt\frac{c_\varepsilon}c\,v\,,
	\end{equation*}
	so that $u\in S_{c_\varepsilon}$ and, moreover,
	\begin{equation*}
		\frac{\|v\|_p^p}{\|v\|_2^{p-2}\left(\varepsilon\|\Delta v\|_2^2+\|\nabla v\|_2^2\right)}=\frac{\|u\|_p^p}{\|u\|_2^{p-2}\left(\varepsilon\|\Delta u\|_2^2+\|\nabla u\|_2^2\right)}\,.
	\end{equation*}
	Hence, it is clear that
	\begin{equation*}
		\sup_{v\in S_c}\frac{\|v\|_p^p}{\|v\|_2^{p-2}\left(\varepsilon\|\Delta v\|_2^2+\|\nabla v\|_2^2\right)}=\sup_{u\in S_{c_\varepsilon}}\frac{\|u\|_p^p}{\|u\|_2^{p-2}\left(\varepsilon\|\Delta u\|_2^2+\|\nabla u\|_2^2\right)}=\frac p2c_\varepsilon^{-\frac{p-2}2}.
	\end{equation*}
\end{proof}

\begin{proof}[Proof of Theorem \ref{Theorem1.1}]
	From Lemma \ref{Lemma3.2}, we know that the energy GSS with critical mass $c_\varepsilon$ is an optimizer of \eqref{1.5}. On the other hand, if $\|u\|_2^2=c_\varepsilon$ and $u$ is an optimizer of \eqref{1.5}, then from \eqref{3.1} we know that $u$ is an energy GSS.
\end{proof}

\section{Second characterization of the energy GSS with critical mass \texorpdfstring{$c_\varepsilon$}{c_epsilon}:\\ Proof of Theorem \ref{Theorem1.2}}

In this section, we prove Theorem \ref{Theorem1.2}, i.e., the characterization of the energy GSS with critical mass $c_\varepsilon$ via optimizers of \eqref{1.4}. The proof will be complete once establish the following three results, which are proved in the remainder of this section.

\begin{proposition}\label{Theorem4.1}
	Let $p\in(2+\frac4N,2+\frac8N)$ and $\varepsilon>0$. Then there exists an optimizer $Q\in H^2(\R^N)$ for \eqref{1.4}. Moreover, $Q$ is a weak solution of
	\begin{equation}\label{eq_Q'}
		\varepsilon\Delta^2Q-\Delta Q+\omega Q=|Q|^{p-2}Q\,, \quad\ x\in \R^N,
	\end{equation}
	for some $\omega>0$.
\end{proposition}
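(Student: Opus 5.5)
The simplest route avoids a direct concentration-compactness argument for the Weinstein-type quotient and instead uses Theorem \ref{TheoremB}. By Theorem \ref{TheoremB}(3) there is an energy GSS $u_{c_\varepsilon}$ with $\|u_{c_\varepsilon}\|_2^2=c_\varepsilon$, and by Lemma \ref{Lemma_Bonheure}, $E_\varepsilon(u_{c_\varepsilon})=m_\varepsilon(c_\varepsilon)=0$. I will show that $Q:=u_{c_\varepsilon}$ is an optimizer of \eqref{1.4}.

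Write $a=\|\Delta Q\|_2^2$, $b=\|\nabla Q\|_2^2$, $\theta=\frac{N(p-2)-4}{4}\in(0,1)$. Then \eqref{1.4} reads $\|Q\|_p^p\le C_{N,p}\,a^{\theta}b^{1-\theta}c_\varepsilon^{(p-2)/2}$. Weighted AM--GM gives
\begin{equation*}
\varepsilon a+b\;\ge\;\Big(\tfrac{\varepsilon a}{\theta}\Big)^{\theta}\Big(\tfrac{b}{1-\theta}\Big)^{1-\theta},
\end{equation*}
with equality iff $\varepsilon a/\theta=b/(1-\theta)$. Combined with the explicit formula \eqref{c_epsilon} for $c_\varepsilon$, this yields
\begin{equation*}
\sup_{u\neq0}\frac{\|u\|_p^p}{\|u\|_2^{p-2}(\varepsilon\|\Delta u\|_2^2+\|\nabla u\|_2^2)}\;\le\; C_{N,p}\,\varepsilon^{-\theta}\theta^{\theta}(1-\theta)^{1-\theta}.
\end{equation*}
The constants should match exactly, so that the right-hand side equals $\frac p2 c_\varepsilon^{-\frac{p-2}2}=K_{N,p,\varepsilon}$ by Lemma \ref{Lemma3.2}. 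The main computational check is that $\big(\frac{2p}{N(p-2)-4}\big)^{\dots}$ and $\big(\frac{2p}{8-N(p-2)}\big)^{\dots}$ in \eqref{c_epsilon} correspond to $\theta/p$ and $(1-\theta)/p$ factors. To get this constant without trusting the algebra, I would instead argue by scaling. For $u_\lambda(x)=\lambda^{N/2}u(\lambda x)$ the $L^2$ norm and the $C_{N,p}$-quotient are preserved, while $\inf_\lambda$ of the $K$-type denominator ratio reproduces exactly the AM--GM optimum. Hence $K_{N,p,\varepsilon}=C_{N,p}\varepsilon^{-\theta}\theta^\theta(1-\theta)^{1-\theta}$ holds with equality by taking the supremum over $\lambda$ then $u$. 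Since $Q$ attains $K_{N,p,\varepsilon}$ (Lemma \ref{Lemma3.2}), the chain of inequalities
\begin{equation*}
K_{N,p,\varepsilon}=\frac{\|Q\|_p^p}{c_\varepsilon^{(p-2)/2}(\varepsilon a+b)}\le \frac{C_{N,p}a^\theta b^{1-\theta}}{\varepsilon a+b}\le C_{N,p}\varepsilon^{-\theta}\theta^\theta(1-\theta)^{1-\theta}=K_{N,p,\varepsilon}
\end{equation*}
forces equality in \eqref{1.4} for $Q$, so $Q$ is an optimizer. This step, identifying $K_{N,p,\varepsilon}$ with the scaled $C_{N,p}$, is the crux. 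Note that it needs $a,b>0$, which holds since $Q\neq0$.

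For the equation, $Q$ minimizes $E_\varepsilon$ on $S_{c_\varepsilon}$, so the Lagrange multiplier rule gives $\omega\in\R$ with $\varepsilon\Delta^2Q-\Delta Q+\omega Q=|Q|^{p-2}Q$ weakly. To show $\omega>0$, test with $Q$:
\begin{equation*}
\varepsilon a+b+\omega c_\varepsilon=\|Q\|_p^p.
\end{equation*}
From $E_\varepsilon(Q)=0$ we have $\|Q\|_p^p=\frac p2(\varepsilon a+b)$, so $\omega c_\varepsilon=(\frac p2-1)(\varepsilon a+b)>0$ since $p>2$. This gives $\omega>0$.
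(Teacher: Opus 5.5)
Your proof is correct, but it takes a genuinely different route from the paper's.

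\textbf{The paper's route.} The paper builds the optimizer from scratch. It minimizes the Weinstein functional $W_p$ and uses the two-parameter scaling $\Lambda_{1,n}w_n(\Lambda_{2,n}\cdot)$ to normalize $\|\Delta v_n\|_2=\|\nabla v_n\|_2=1$. It bounds $\|v_n\|_2$ and $\|v_n\|_p$ via \eqref{1.2}--\eqref{1.3}, excludes vanishing by Lions' lemma, and excludes dichotomy by a Brezis--Lieb/Young splitting. Finally it rescales the Euler--Lagrange equation of $W_p$ into \eqref{eq_Q'}, obtaining the explicit $\omega(\varepsilon)$ of \eqref{omega}.

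\textbf{Your route.} You avoid compactness altogether. You take the critical-mass energy GSS supplied by Theorem~\ref{TheoremB}, which optimizes \eqref{1.5} by Lemma~\ref{Lemma3.2}. You then transfer optimality to \eqref{1.4} through the identity $K_{N,p,\varepsilon}=\varepsilon^{-\theta}\theta^\theta(1-\theta)^{1-\theta}C_{N,p}$. Your AM--GM bound plus the mass-preserving dilation $u_\lambda$ establish this identity rigorously: the $C_{N,p}$-quotient is dilation invariant, and the AM--GM equality case $\lambda^2=\theta b/((1-\theta)\varepsilon a)$ is always reachable since $a,b>0$. The equation and $\omega>0$ then follow from the Lagrange multiplier rule together with $E_\varepsilon(Q)=0$.

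\textbf{What each buys.} Your argument has three advantages.
\begin{enumerate}
\item It is much shorter.
\item It yields the relation between $K_{N,p,\varepsilon}$ and $C_{N,p}$ (the Remark after Theorem~\ref{Theorem1.1}) with no existence input at all.
\item It proves Proposition~\ref{Theorem4.4} at the same time, since your chain of inequalities applies verbatim to every critical-mass energy GSS.
\end{enumerate}
The price is that all the compactness is outsourced to the endpoint case $c=c_\varepsilon$ of Theorem~\ref{TheoremB}(3). Your argument therefore cannot serve as an independent construction of critical-mass ground states. The paper's existence proof, combined with Proposition~\ref{Theorem4.2}, can produce such a state directly from an optimizer of \eqref{1.4}. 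Also, you obtain only some $\omega>0$. That is all the statement requires, but the paper's construction ties $Q$ to a normalized optimizer $v$ and yields the specific value \eqref{omega} used later in Theorem~\ref{Theorem1.3}.
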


By virtue of Theorem \ref{Theorem4.1}, we will manage to explicitly characterize the relation between the best constant $C_{N,p}$ in \eqref{1.4} and the critical mass $c_\varepsilon$ in Theorem \ref{TheoremB}. Furthermore, we prove that $Q$ is an energy GSS with critical mass $c_\varepsilon$.

\begin{proposition}\label{Theorem4.2}
	Let $p\in(2+\frac4N,2+\frac8N)$ and $\varepsilon>0$. Then $Q$ given by Proposition \ref{Theorem4.1} is an energy GSS of \eqref{BS} with critical mass, i.e.,
	\begin{equation*}
		E_\varepsilon(Q)=0\qquad\mbox{and}\qquad\|Q\|_2^2=c_\varepsilon\,.
	\end{equation*}
\end{proposition}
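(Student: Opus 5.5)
The plan is to extract two algebraic identities from the fact that $Q$ solves both the Euler--Lagrange equation of the optimization problem for \eqref{1.4} and \eqref{eq_Q'}. We then read off $E_\varepsilon(Q)=0$ and the value of $\|Q\|_2^2$ from the equality case of \eqref{1.4}. Throughout, set
\begin{equation*}
A=\|\Delta Q\|_2^2,\quad B=\|\nabla Q\|_2^2,\quad M=\|Q\|_2^2,\quad D=\|Q\|_p^p,
\end{equation*}
and write $\alpha=\frac{N(p-2)-4}{2}$, $\beta=\frac{8-N(p-2)}{2}$, so that $\alpha,\beta>0$ and $\alpha+\beta=2$.

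\textbf{Step 1: the Euler--Lagrange equation.} Since $Q$ maximizes
\begin{equation*}
J(u)=\frac{\|u\|_p^p}{\|u\|_{D^{2,2}}^{\alpha}\|u\|_{D^{1,2}}^{\beta}\|u\|_2^{p-2}}
\end{equation*}
on $H^2(\R^N)\setminus\{0\}$, differentiating $\log J$ at $Q$ gives the weak equation
\begin{equation*}
\frac{p}{D}|Q|^{p-2}Q=\frac{\alpha}{A}\Delta^2Q-\frac{\beta}{B}\Delta Q+\frac{p-2}{M}Q .
\end{equation*}

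\textbf{Step 2: matching coefficients.} I expect the optimizer $Q$ of Proposition \ref{Theorem4.1} to be obtained exactly by choosing the free parameters in the invariance $u\mapsto s\,u(\lambda\,\cdot)$ of $J$ so that this equation becomes \eqref{eq_Q'}. If so, matching coefficients yields
\begin{equation*}
\varepsilon=\frac{\alpha D}{pA},\qquad 1=\frac{\beta D}{pB},\qquad \omega=\frac{(p-2)D}{pM}>0 .
\end{equation*}
I would make this point explicit when recalling the proof of Proposition \ref{Theorem4.1}, since the statement itself only says that \eqref{eq_Q'} holds for some $\omega>0$. Since the three functions $\Delta^2Q$, $\Delta Q$, $Q$ are not proportional, one can alternatively test the two equations against $Q$ and against the dilation generator $x\cdot\nabla Q$. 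This gives the Nehari and Pohozaev identities, and the same two relations $\varepsilon A=\frac{\alpha}{p}D$ and $B=\frac{\beta}{p}D$ follow. I regard this identification as the only delicate point: justifying either the normalization or the Pohozaev identity, the latter requiring regularity and decay of $Q$.

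\textbf{Step 3: conclusion.} From $\alpha+\beta=2$ we get $\varepsilon A+B=\frac{2}{p}D$, so
\begin{equation*}
E_\varepsilon(Q)=\tfrac12(\varepsilon A+B)-\tfrac1pD=0 .
\end{equation*}
Next, equality in \eqref{1.4} gives $D=C_{N,p}A^{\alpha/2}B^{\beta/2}M^{\frac{p-2}{2}}$. Substituting $A=\frac{\alpha D}{p\varepsilon}$ and $B=\frac{\beta D}{p}$, the powers of $D$ cancel because $\frac{\alpha}{2}+\frac{\beta}{2}=1$. This leaves
\begin{equation*}
M^{\frac{p-2}{2}}=C_{N,p}^{-1}\Big(\frac{p}{\alpha}\Big)^{\alpha/2}\Big(\frac{p}{\beta}\Big)^{\beta/2}\varepsilon^{\alpha/2}.
\end{equation*}
Since $\frac p\alpha=\frac{2p}{N(p-2)-4}$ and $\frac p\beta=\frac{2p}{8-N(p-2)}$, raising to the power $\frac{2}{p-2}$ reproduces exactly \eqref{c_epsilon}; hence $M=c_\varepsilon$. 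Finally, by Lemma \ref{Lemma_Bonheure} we have $m_\varepsilon(c_\varepsilon)=0=E_\varepsilon(Q)$ with $Q\in S_{c_\varepsilon}$, so $Q$ attains $m_\varepsilon(c_\varepsilon)$. Thus $Q$ is an energy GSS with critical mass.
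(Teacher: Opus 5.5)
Your main route is the paper's. In Lemma \ref{Lemma_eq}, $Q=\lambda v(\mu\,\cdot)$ is built by choosing $\lambda,\mu$ so that the Euler--Lagrange equation of $W_p$ becomes \eqref{eq_Q'}. The paper then reads off $\varepsilon\|\Delta Q\|_2^2=\frac{\alpha}{p}\|Q\|_p^p$ and $\|\nabla Q\|_2^2=\frac{\beta}{p}\|Q\|_p^p$ from the explicit $\lambda,\mu$, which is equivalent to your coefficient matching. It deduces $E_\varepsilon(Q)=0$ and gets $\|Q\|_2^2=c_\varepsilon$ from equality in \eqref{1.4}, exactly as in your Step 3. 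Your final appeal to Lemma \ref{Lemma_Bonheure}, to conclude that $Q$ attains $m_\varepsilon(c_\varepsilon)$, makes explicit what the paper leaves implicit.

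The alternative justification you offer in Step 2 does not work.
\begin{itemize}
\item Tested against $Q$ and $x\cdot\nabla Q$, the Euler--Lagrange equation gives $\alpha+\beta+(p-2)=p$ and $\frac{N-4}{2}\alpha+\frac{N-2}{2}\beta+\frac{N}{2}(p-2)=N$. Both hold identically: they only encode the invariance of $W_p$ under $u\mapsto s\,u(\lambda\,\cdot)$, so they carry no information.
\item Equation \eqref{eq_Q'} gives $\varepsilon A+B+\omega M=D$ and $\frac{N-4}{2}\varepsilon A+\frac{N-2}{2}B+\frac{N}{2}\omega M=\frac{N}{p}D$. Since $\omega$ is unknown, eliminating $\omega M$ leaves only $2\varepsilon A+B=\frac{N(p-2)}{2p}D$.
\item That single relation does not separate $\varepsilon A$ from $B$, so $E_\varepsilon(Q)=\frac{N(p-2)-4}{4p}D-\frac{\varepsilon}{2}A$ stays undetermined.
\end{itemize}
So the Nehari--Poho\v zaev route is insufficient, not just technically delicate.

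If you want an argument that uses only the two properties stated in Proposition \ref{Theorem4.1}, without the specific normalization, make the coefficient matching rigorous through linear independence rather than ``non-proportionality''. Multiply \eqref{eq_Q'} by $p/D$ and subtract it from the Euler--Lagrange equation. Then $Q$ solves $a\Delta^2Q-b\Delta Q+cQ=0$ with $a=\frac{\alpha}{A}-\frac{p\varepsilon}{D}$, $b=\frac{\beta}{B}-\frac{p}{D}$ and $c=\frac{p-2}{M}-\frac{p\omega}{D}$. Taking the Fourier transform gives $(a|\xi|^4+b|\xi|^2+c)\hat Q=0$ a.e. Since $\hat Q\neq0$ on a set of positive measure, and a nontrivial polynomial in $|\xi|^2$ vanishes only on finitely many spheres, $a=b=c=0$. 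These are exactly your three relations, and the argument needs no regularity, decay or Poho\v zaev identity.
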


\begin{proposition}\label{Theorem4.4}
	Let $p\in(2+\frac4N,2+\frac8N)$ and $\varepsilon>0$.
	Let $u_{c_\varepsilon}\in H^2(\R^N)$ be an energy GSS for \eqref{BS} with mass $\|u_{c_\varepsilon}\|_2^2=c_\varepsilon$. Then
	\begin{equation*}
		\|u_{c_\varepsilon}\|_p^p=C_{N,p}\|u_{c_\varepsilon}\|_{D^{2,2}}^{\frac{N(p-2)-4}2}\|u_{c_\varepsilon}\|_{D^{1,2}}^{\frac{8-N(p-2)}2}\|u_{c_\varepsilon}\|_2^{p-2}\,,
	\end{equation*}
	that is, $u_{c_\varepsilon}$ is an optimizer for \eqref{1.4}.
\end{proposition}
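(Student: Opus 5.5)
Write $\alpha:=\frac{N(p-2)-4}{2}$ and $\beta:=\frac{8-N(p-2)}{2}$, so that $\alpha,\beta>0$ and $\alpha+\beta=2$. The plan is to show that, at mass $c_\varepsilon$, an energy GSS saturates the weighted AM--GM step that turns \eqref{1.4} into \eqref{1.5}. Since $u:=u_{c_\varepsilon}$ is a minimizer with $E_\varepsilon(u)=m_\varepsilon(c_\varepsilon)=0$ (Lemma \ref{Lemma_Bonheure}), we have
\begin{equation*}
\frac1p\|u\|_p^p=\frac\varepsilon2\|\Delta u\|_2^2+\frac12\|\nabla u\|_2^2 .
\end{equation*}
By Lemma \ref{Lemma3.2}, $u$ is also an optimizer of \eqref{1.5}, with $K_{N,p,\varepsilon}=\frac p2c_\varepsilon^{-\frac{p-2}2}$.

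The first step is a lower bound on the right-hand side of \eqref{1.5}. For $A:=\|\Delta u\|_2$ and $B:=\|\nabla u\|_2$, the weighted AM--GM inequality with weights $\alpha/2$ and $\beta/2$ gives
\begin{equation*}
\varepsilon A^2+B^2=\frac\alpha2\cdot\frac{2\varepsilon A^2}{\alpha}+\frac\beta2\cdot\frac{2B^2}{\beta}\geqslant\left(\frac{2\varepsilon}{\alpha}\right)^{\frac\alpha2}\left(\frac2\beta\right)^{\frac\beta2}A^{\alpha}B^{\beta}.
\end{equation*}
Equality holds if and only if $\varepsilon A^2/\alpha=B^2/\beta$.

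The second step uses the scaling $u_\lambda(x):=\lambda^{N/2}u(\lambda x)$. This preserves the $L^2$ norm and multiplies $A^2$, $B^2$, $\|u\|_p^p$ by $\lambda^4$, $\lambda^2$, $\lambda^{N(p-2)/2}$ respectively. The function $\lambda\mapsto E_\varepsilon(u_\lambda)$ is minimized at $\lambda=1$, because $u_\lambda\in S_{c_\varepsilon}$ and $u$ attains $m_\varepsilon(c_\varepsilon)$. Setting its derivative to zero gives the Pohozaev identity
\begin{equation*}
2\varepsilon A^2+B^2=\frac{N(p-2)}{2p}\|u\|_p^p .
\end{equation*}
Combining this with $E_\varepsilon(u)=0$ in the form $\|u\|_p^p=\frac p2(\varepsilon A^2+B^2)$ yields $2\varepsilon A^2+B^2=\frac{N(p-2)}{4}(\varepsilon A^2+B^2)$. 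This is exactly $\varepsilon A^2/\alpha=B^2/\beta$, so the AM--GM step is an equality for $u$.

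The third step is to conclude by contradiction. Suppose some $w\in H^2\setminus\{0\}$ had a larger \eqref{1.4} quotient than $u$. Every quotient involved is invariant under the scalings $w\mapsto tw$ and $w\mapsto w(\mu\,\cdot)$. Using these two scalings, I would adjust $w$ so that $\|w\|_2^2=c_\varepsilon$ and $\varepsilon\|\Delta w\|_2^2/\alpha=\|\nabla w\|_2^2/\beta$. The dilation $w(\mu\,\cdot)$ changes the ratio $\|\Delta w\|_2^2/\|\nabla w\|_2^2$ by the factor $\mu^2$, so the balancing condition can always be reached. For the adjusted $w$, AM--GM is an equality, and therefore its \eqref{1.5} quotient equals the fixed constant $\left(\frac{2\varepsilon}{\alpha}\right)^{-\frac\alpha2}\left(\frac2\beta\right)^{-\frac\beta2}$ times its \eqref{1.4} quotient. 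The same identity holds for $u$. Hence $w$ would have a strictly larger \eqref{1.5} quotient than $u$, contradicting the maximality of $u$ from Lemma \ref{Lemma3.2}. Therefore $u$ attains the supremum $C_{N,p}$ in \eqref{1.4}.

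The only delicate point is the Pohozaev identity. It needs differentiability of $\lambda\mapsto E_\varepsilon(u_\lambda)$, which is clear since each term is a power of $\lambda$, and this avoids any regularity issues for the Euler--Lagrange equation. Everything else is the scaling bookkeeping above.
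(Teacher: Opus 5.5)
Your argument is correct, and its first half is the paper's. The paper also uses the mass-preserving dilation $t^{N/2}u(t\,\cdot)$ (through $h_\varepsilon(t)=E_\varepsilon(u_t)/t^2$) together with $E_\varepsilon(u)=0$ to obtain $\varepsilon\|\Delta u\|_2^2=\frac{\alpha}{p}\|u\|_p^p$ and $\|\nabla u\|_2^2=\frac{\beta}{p}\|u\|_p^p$, which is exactly your balancing condition. You genuinely differ in the closing step. The paper inserts these identities into $\|u\|_{D^{2,2}}^{\alpha}\|u\|_{D^{1,2}}^{\beta}$ and then appeals to the closed-form expression \eqref{c_epsilon} of $c_\varepsilon$ in terms of $C_{N,p}$. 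That formula is imported from Theorem B and matched via Proposition \ref{Theorem4.2}, which in turn rests on the existence of an optimizer $Q$ of \eqref{1.4} obtained by concentration--compactness. You instead transfer maximality from \eqref{1.5} to \eqref{1.4}. Write $R_C,R_K$ for the two quotients and $\kappa=(2\varepsilon/\alpha)^{\alpha/2}(2/\beta)^{\beta/2}$. Weighted AM--GM gives $R_K\leqslant\kappa^{-1}R_C$, with equality exactly on balanced functions, and the dilation invariance of $R_C$ lets you balance any competitor $w$, so $R_C(w)\leqslant\kappa K_{N,p,\varepsilon}=\kappa R_K(u)=R_C(u)$. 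What your route buys is self-containedness: you need only Lemma \ref{Lemma3.2} (i.e.\ $E_\varepsilon\geqslant0$ on $S_{c_\varepsilon}$ and $E_\varepsilon(u)=0$), and neither the numerical value of $c_\varepsilon$ nor Propositions \ref{Theorem4.1}--\ref{Theorem4.2}. As a by-product you get $C_{N,p}=\kappa K_{N,p,\varepsilon}$, which combined with $K_{N,p,\varepsilon}=\frac p2c_\varepsilon^{-(p-2)/2}$ re-derives \eqref{c_epsilon} instead of assuming it. The paper's ending is a one-line computation, but only because that formula is taken as given. One small inaccuracy: the \eqref{1.5} quotient is not invariant under $w\mapsto w(\mu\,\cdot)$, since $\varepsilon\|\Delta w\|_2^2+\|\nabla w\|_2^2$ is not homogeneous under dilations. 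So ``every quotient involved is invariant'' is false as stated, but you only use the invariance of the \eqref{1.4} quotient, so nothing breaks.
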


\noindent Henceforth, we always assume $p\in(2+\frac4N,2+\frac8N)$ and $\varepsilon>0$.

\subsection{Existence of optimizers for \texorpdfstring{\eqref{1.4}}{GN_C} (Proof of Proposition \ref{Theorem4.1})}

Inspired by \cite{Weinstein1983CMP}, we define the Weinstein functional associated to problem \eqref{BS} as
\begin{equation*}
	W_p(u):=\frac{\|u\|_{D^{2,2}(\R^N)}^{\frac{N(p-2)-4}2}\|u\|_{D^{1,2}(\R^N)}^{\frac{8-N(p-2)}2}\|u\|_{L^2(\R^N)}^{p-2}}{\|u\|_{L^p(\R^N)}^p}\,,
\end{equation*}
so that
\begin{equation*}
	C_{N,p}^{-1}=\inf_{u\in H^2(\R^N)\backslash\{0\}}W_p(u)\,.
\end{equation*}

\begin{lemma}\label{Lemma_scaling}
	There exists a minimizing sequence $\{v_n\}\subset H^2(\R^N)$ for the Weinstein functional, that is
	\begin{equation}\label{bv_n_Weinstein}
		\lim_{n\to+\infty}W_p(v_n)=C_{N,p}^{-1}\,,
	\end{equation}
	such that
	\begin{equation}\label{bv_n_properties_1}
		\|v_n\|_{D^{2,2}}=1=\|v_n\|_{D^{1,2}}\,,
	\end{equation}
	and moreover
	\begin{equation}\label{bv_n_properties_p}
		\lim_{n\to+\infty}\|v_n\|_p>0
	\end{equation}
	and
	\begin{equation}\label{bv_n_properties_2}
		\lim_{n\to+\infty}\|v_n\|_2\in(0,+\infty)\,.
	\end{equation}
\end{lemma}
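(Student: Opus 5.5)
Take any minimizing sequence $\{u_n\}\subset H^2(\R^N)\backslash\{0\}$ for $W_p$, so that $W_p(u_n)\to C_{N,p}^{-1}$; such a sequence exists by the definition of $C_{N,p}$ and Lemma \ref{Lemma2.1}. The point is that $W_p$ is invariant under the two-parameter family of transformations $u\mapsto \mu u(\lambda\,\cdot)$, $\mu,\lambda>0$, and these will be used to normalize $\|\Delta u_n\|_2$ and $\|\nabla u_n\|_2$.

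First we check the invariance. For $w(x)=\mu u(\lambda x)$ one has
\[
\|w\|_{D^{2,2}}=\mu\lambda^{2-\frac N2}\|u\|_{D^{2,2}},\quad \|w\|_{D^{1,2}}=\mu\lambda^{1-\frac N2}\|u\|_{D^{1,2}},\quad \|w\|_2=\mu\lambda^{-\frac N2}\|u\|_2,\quad \|w\|_p^p=\mu^p\lambda^{-N}\|u\|_p^p .
\]
The exponents in the numerator of $W_p$ add up to $\frac{N(p-2)-4}{2}+\frac{8-N(p-2)}{2}+(p-2)=p$, so the powers of $\mu$ cancel. For $\lambda$, the numerator carries
\[
\bigl(2-\tfrac N2\bigr)\tfrac{N(p-2)-4}{2}+\bigl(1-\tfrac N2\bigr)\tfrac{8-N(p-2)}{2}-\tfrac N2(p-2)=-N ,
\]
which is a direct check: the $N(p-2)$ terms cancel and the constants give $-4+4-N$. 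This matches the denominator. Note also that $\|\nabla u\|_2>0$ and $\|\Delta u\|_2>0$ for every $u\neq0$ in $H^2$.

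Next we normalize. The ratio $\|w\|_{D^{2,2}}/\|w\|_{D^{1,2}}=\lambda\|u\|_{D^{2,2}}/\|u\|_{D^{1,2}}$, so choosing $\lambda_n=\|u_n\|_{D^{1,2}}/\|u_n\|_{D^{2,2}}$ makes the two seminorms equal. Choosing $\mu_n$ then makes both equal to $1$. Setting $v_n:=\mu_n u_n(\lambda_n\cdot)$ gives \eqref{bv_n_Weinstein} and \eqref{bv_n_properties_1}.

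Then we derive the bounds on $\|v_n\|_2$ and $\|v_n\|_p$. By \eqref{bv_n_properties_1}, $W_p(v_n)=\|v_n\|_2^{p-2}/\|v_n\|_p^p$. An upper bound on $\|v_n\|_p$ comes from interpolation, $\|\nabla v\|_2^2\le\|v\|_2\|\Delta v\|_2$, which gives $\|v_n\|_2\ge1$. The $L^2$ norm is bounded above from the other side. Using \eqref{1.3} with $q=p$, we get $\|v_n\|_p^p\le C\|v_n\|_2^{p-\frac{N(p-2)}4}$. Since $W_p(v_n)\le 2C_{N,p}^{-1}$ for large $n$,
\[
\|v_n\|_2^{p-2}\le 2C_{N,p}^{-1}\|v_n\|_p^p\le C'\|v_n\|_2^{p-\frac{N(p-2)}4}.
\]
In the regime $p<2+\frac8N$ we have $p-2>p-\frac{N(p-2)}4$, because $\frac{N(p-2)}4<2$. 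Hence $\|v_n\|_2$ is bounded. With the bound $\|v_n\|_2\ge1$, the sequence $\|v_n\|_2$ lies in $[1,M]$. Then $\|v_n\|_p^p=\|v_n\|_2^{p-2}/W_p(v_n)$ is bounded above and away from zero.

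Finally, we pass to a subsequence along which both limits exist. This gives \eqref{bv_n_properties_p} and \eqref{bv_n_properties_2}. The only delicate step is the exponent comparison giving the upper bound on $\|v_n\|_2$, which uses $p<2+\frac8N$ essentially.
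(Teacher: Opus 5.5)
Your scaling-invariance check and your normalization are correct. The lower bound $\|v_n\|_2\ge 1$, obtained from $\|\nabla v\|_2^2=-\int v\,\Delta v\le\|v\|_2\|\Delta v\|_2$, is also correct. It is in fact neater than the paper's lower-bound argument via \eqref{1.3}. It already gives \eqref{bv_n_properties_p}, since $\|v_n\|_p^p=\|v_n\|_2^{p-2}/W_p(v_n)\ge 1/W_p(v_n)\to C_{N,p}$.

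The upper bound on $\|v_n\|_2$ in \eqref{bv_n_properties_2} is not proved, because your exponent comparison is reversed.

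\begin{itemize}
\item If $\frac{N(p-2)}4<2$, then $p-\frac{N(p-2)}4>p-2$, not smaller.
\item So your chain $\|v_n\|_2^{p-2}\le C'\|v_n\|_2^{p-\frac{N(p-2)}4}$ only yields $\|v_n\|_2^{2-\frac{N(p-2)}4}\ge 1/C'$. That is again a lower bound, and it is exactly how the paper uses \eqref{1.3}.
\item More generally, with $\|\Delta v_n\|_2=1$ fixed, \eqref{1.3} bounds $\|v_n\|_p^p$ by a power of $\|v_n\|_2$ larger than $p-2$. It can therefore never rule out $\|v_n\|_2\to\infty$.
\end{itemize}

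You need an estimate of $\|v_n\|_p^p$ by a power of $\|v_n\|_2$ strictly smaller than $p-2$. The paper gets it from the first-order inequality \eqref{1.2} with $\|\nabla v_n\|_2=1$:
\begin{itemize}
\item first, $\|v_n\|_p^p\le C\|v_n\|_2^{p-\frac{N(p-2)}2}$;
\item hence $\|v_n\|_2^{\frac{N(p-2)}2-2}\le C\,W_p(v_n)$, which stays bounded.
\end{itemize}
The exponent is positive precisely because $p>2+\frac4N$. So the hypothesis doing the work for the upper bound is the lower endpoint of the mass-competition range, not $p<2+\frac8N$ as you claim.

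One further caution applies to this fix. If $N\ge5$ and $p\ge\frac{2N}{N-2}$, then \eqref{1.2} at exponent $p$ is not available. In that case interpolate $L^p$ between two spaces:
\begin{itemize}
\item $L^{2+\frac4N}$, controlled by \eqref{1.2} and $\|\nabla v_n\|_2=1$;
\item $L^{\frac{2N}{N-4}}$, controlled by Sobolev and $\|\Delta v_n\|_2=1$.
\end{itemize}
This gives $\|v_n\|_p^p\le C\|v_n\|_2^{\frac{2\theta p}{N+2}}$ with $\theta\in(0,1)$. Since $\frac{2p}{N+2}<p-2$ if and only if $p>2+\frac4N$, the upper bound again follows.
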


\begin{proof}
	From Lemma \ref{Lemma2.1}, we know that there exists a minimizing sequence $\{w_n\}\subset H^2(\R^N)$ for $W_p$, namely such that
	\begin{equation*}
		\lim_{n\to+\infty}W_p(w_n)=C_{N,p}^{-1}\,.
	\end{equation*}
	We rescale the sequence by setting
	\begin{equation*}
		v_n(x)=\Lambda_{1,n}w_n(\Lambda_{2,n}x)\,,
	\end{equation*}
	where
	\begin{equation*}
		\Lambda_{1,n}=\frac{\left(\int_{\R^N}|\nabla w_n|^2\dd x\right)^{\frac{N-4}4}}{\left(\int_{\R^N}|\Delta w_n|^2\dd x\right)^{\frac{N-2}4}}\quad\ \ \mbox{and}\quad\ \  \Lambda_{2,n}=\left(\frac{\int_{\R^N}|\nabla w_n|^2\dd x}{\int_{\R^N}|\Delta w_n|^2\dd x}\right)^{\frac12}.
	\end{equation*}
	Computing the norms involved in the Weinstein functional, we get
	\begin{equation*}
		\int_{\R^N}|\Delta v_n|^2\dd x=\Lambda_{1,n}^2\Lambda_{2,n}^{4-N}\int_{\R^N}|\Delta w_n|^2\dd x=1\,,
	\end{equation*}
	and
	\begin{equation*}
		\int_{\R^N}|\nabla v_n|^2\dd x=\Lambda_{1,n}^2\Lambda_{2,n}^{2-N}\int_{\R^N}|\nabla w_n|^2\dd x=1\,;
	\end{equation*}
	while
	\begin{equation*}
		\int_{\R^N}|v_n|^2\dd x=\Lambda_{1,n}^2\Lambda_{2,n}^{-N}\int_{\R^N}|w_n|^2\dd x\,,
	\end{equation*}
	and
	\begin{equation*}
		\int_{\R^N}|v_n|^p\dd x=\Lambda_{1,n}^p\Lambda_{2,n}^{-N}\int_{\R^N}|w_n|^p\dd x\,.
	\end{equation*}
	By these computations, we easily see that the Weinstein functional $W_p$ is invariant under this scaling. In fact,
	\begin{equation}\label{Wp_invariant}
		\begin{split}
			W_p(v_n)&=\frac{\|v_n\|_{D^{2,2}}^{\frac{N(p-2)-4}2}\|v_n\|_{D^{1,2}}^{\frac{8-N(p-2)}2}\|v_n\|_2^{p-2}}{\|v_n\|_p^p}\\
			&=\frac{[\Lambda_{1,n}^2\Lambda_{2,n}^{4-N}]^{\frac{N(p-2)-4}4}[\Lambda_{1,n}^2\Lambda_{2,n}^{2-N}]^{\frac{8-N(p-2)}4}[\Lambda_{1,n}^2\Lambda_{2,n}^{-N}]^{\frac{p-2}2}}{\Lambda_{1,n}^p\Lambda_{2,n}^{-N}}W_p(w_n)\\
			&=W_p(w_n)\,.
		\end{split}
	\end{equation}
	Consequently, $\{v_n\}$ is also a minimizing sequence. Hence, by \eqref{bv_n_Weinstein} and \eqref{bv_n_properties_1} we infer
	\begin{equation}\label{bv_n_proof}
		C_{N,p}^{-1}=\lim_{n\to+\infty}W_p(v_n)=\lim_{n\to+\infty}\frac{\|v_n\|_2^{p-2}}{\|v_n\|_p^p}\,.
	\end{equation}
	Using \eqref{bv_n_properties_1}, \eqref{bv_n_proof}, and the Gagliardo-Nirenberg inequality \eqref{1.2}, we infer
	\begin{equation*}
		\begin{split}
			C_{N,p}^{-1}&\geqslant\lim_{n\to+\infty}\frac{\|v_n\|_2^{p-2}}{C\|v_n\|_2^{p-\frac{N(p-2)}2}}
			=\frac1C\lim_{n\to+\infty}\|v_n\|_2^{\frac{N(p-2)}2-2}
		\end{split}
	\end{equation*}
	so, since $p>2+\frac4N$, we get the upper bound in \eqref{bv_n_properties_2}. On the other hand, by \eqref{bv_n_properties_1}, \eqref{bv_n_proof}, and the Gagliardo-Nirenberg inequality \eqref{1.3}, one gets
	\begin{equation*}
		\begin{split}
			C_{N,p}^{-1}&\geqslant\lim_{n\to+\infty}\frac{\|v_n\|_2^{p-2}}{C\|v_n\|_2^{p-\frac{N(p-2)}4}}
			=\frac1C\lim_{n\to+\infty}\|v_n\|_2^{\frac{N(p-2)}4-2},
		\end{split}
	\end{equation*}
	which implies the lower bound in \eqref{bv_n_properties_2}, since $p<2+\frac8N$. Finally, we prove the non-vanishing of $\|v_n\|_p$ as follows: by \eqref{bv_n_properties_1} we can write the ratio in \eqref{bv_n_proof} as
	\begin{equation*}
		\begin{split}
			C_{N,p}^{-1}&=\lim_{n\to+\infty}\frac{\left[\|v_n\|_{D^{2,2}}^{\frac{N(p-2)}4}\|v_n\|_2^{p-\frac{N(p-2)}4}\right]^{\frac{4(p-2)}{4p-Np+2N}}}{\|v_n\|_p^p}
			\geqslant\frac1C\lim_{n\to+\infty}\|v_n\|_p^{p\left[\frac{4(p-2)}{4p-Np+2N}-1\right]}\,,
		\end{split}
	\end{equation*}
	using again the Gagliardo-Nirenberg inequality \eqref{1.3}. Since $p<2+\frac8N$, the exponent is negative, hence \eqref{bv_n_properties_p} holds.
\end{proof}

\begin{lemma}\label{Lemma_minimizer}
	There exists a minimizer $v\in H^2(\R^N)\setminus\{0\}$ for inequality \eqref{1.4} such that
	\begin{equation}\label{properties_v}
		\|v\|_{D^{2,2}}^2=\|v\|_{D^{1,2}}^2=1\,,\quad W_p(v)=C_{N,p}^{-1}\,.
	\end{equation}
\end{lemma}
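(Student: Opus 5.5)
We take the normalized minimizing sequence $\{v_n\}$ from Lemma \ref{Lemma_scaling}. It satisfies $\|\Delta v_n\|_2=\|\nabla v_n\|_2=1$, and $\|v_n\|_2$ lies in a compact subset of $(0,+\infty)$. Hence $\{v_n\}$ is bounded in $H^2(\R^N)$, and by \eqref{bv_n_properties_p} it does not vanish in $L^p$.

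\textbf{Step 1: non-vanishing after translation.} Since $2<p<2^*$ and $\{v_n\}$ is bounded in $H^2$ with $\|v_n\|_p\not\to0$, Lions' lemma gives $\sup_{y}\int_{B_1(y)}|v_n|^2\geqslant\delta>0$. Choose $y_n$ realizing this up to a factor $1/2$. All the quantities in $W_p$ are translation invariant, so we replace $v_n$ by $v_n(\cdot+y_n)$. Up to a subsequence, $v_n\rightharpoonup v$ weakly in $H^2$ and $v_n\to v$ in $L^2_{loc}$, which gives $v\neq0$.

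\textbf{Step 2: minimality of the weak limit via Brezis--Lieb.} Write $r_n=v_n-v$. The three squared norms split in the limit:
\begin{equation*}
\|\Delta v_n\|_2^2=\|\Delta v\|_2^2+\|\Delta r_n\|_2^2+o(1),
\end{equation*}
and similarly for $\|\nabla\cdot\|_2^2$ and $\|\cdot\|_2^2$. By Brezis--Lieb, $\|v_n\|_p^p=\|v\|_p^p+\|r_n\|_p^p+o(1)$. Set $a=\|\Delta v\|_2^2$, $b=\|\nabla v\|_2^2$, $m=\|v\|_2^2$, and $a_n,b_n,m_n$ for the corresponding quantities of $r_n$. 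Then $a+a_n\to1$, $b+b_n\to1$ and $m+m_n\to\lim\|v_n\|_2^2=:M$.

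Apply \eqref{1.4} to $v$ and to $r_n$. With $\theta=\frac{N(p-2)-4}{4}$, $1-\theta=\frac{8-N(p-2)}{4}$ and $s=\frac{p-2}{2}$, this gives
\begin{equation*}
\|v_n\|_p^p\leqslant C_{N,p}\bigl(a^\theta b^{1-\theta}m^{s}+a_n^\theta b_n^{1-\theta}m_n^{s}\bigr)+o(1).
\end{equation*}
By H\"older, $a^\theta b^{1-\theta}+a_n^\theta b_n^{1-\theta}\leqslant (a+a_n)^\theta(b+b_n)^{1-\theta}\to1$. Since $s>0$ and $m,m_n\leqslant M$ up to $o(1)$, we get $m^s\leqslant M^s$ and $m_n^s\leqslant M^s$. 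The right-hand side is therefore at most $C_{N,p}M^s+o(1)$. On the other hand, $\|v_n\|_p^p\to C_{N,p}M^s$ by \eqref{bv_n_proof}. So all these inequalities must be equalities in the limit.

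Since $v\ne0$ gives $m>0$ and $a,b>0$, the only way to keep equality is the following. If $\liminf m_n>0$, then $m^s<M^s$ (strictly) while the H\"older weights do not vanish, a contradiction. Hence $m_n\to0$, that is, $v_n\to v$ in $L^2$.

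\textbf{Step 3: strong convergence and conclusion.} From $m_n\to0$, interpolating between $L^2$ and the bounded $H^2$ norm gives $r_n\to0$ in $L^p$. Thus $\|v\|_p^p=\lim\|v_n\|_p^p$ and $\|v\|_2=\lim\|v_n\|_2$. Weak lower semicontinuity gives $a\leqslant1$ and $b\leqslant1$, so $W_p(v)\leqslant\lim W_p(v_n)=C_{N,p}^{-1}$. Hence $v$ is a minimizer.

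Minimality also forces $a=b=1$. To see this, suppose $a<1$ or $b<1$, and rescale $v$ exactly as in Lemma \ref{Lemma_scaling} so that the gradient and Laplacian norms become $1$. The scaling invariance \eqref{Wp_invariant} leaves $W_p$ unchanged, so the rescaled function is still a minimizer, and the normalization \eqref{properties_v} holds for it. This rescaling already yields the lemma, so proving $a=b=1$ for the original $v$ is not even needed.

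\textbf{Main obstacle.} The delicate point is Step 2. In Lions' dichotomy picture, splitting the mass could a priori preserve the ratio, because \eqref{1.4} is only degree-$p$ homogeneous after combining several norms. The argument therefore relies on the superadditivity coming from $s=\frac{p-2}{2}>0$ together with the H\"older inequality for the product $a^\theta b^{1-\theta}$. Step 2 has to be checked carefully, including the degenerate cases where one of $a,b$ or one of $a_n,b_n$ tends to $0$.
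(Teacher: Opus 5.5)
Your proof is correct and follows essentially the paper's route. Both arguments use concentration--compactness to get a nonzero weak limit, Brezis--Lieb splitting, \eqref{1.4} applied to $v$ and $v_n-v$, a convexity bound on the $D^{2,2}$--$D^{1,2}$ factor (you use H\"older where the paper uses Young), and the bound $m,m_n\leqslant M$ to force $v_n\to v$ in $L^2$; your strictness argument there is in fact more explicit than the paper's. The remaining differences are minor: you get $L^p$ convergence by interpolation and normalize by rescaling instead of proving strong $H^2$ convergence, and your claim that minimality forces $a=b=1$ follows directly from $C_{N,p}^{-1}\leqslant W_p(v)=a^\theta b^{1-\theta}C_{N,p}^{-1}$ with $a,b\leqslant 1$, not from the rescaling remark you give.
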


\begin{proof}
	Let $\{\tilde v_n\}\subset H^2(\R^N)$ be the bounded minimizing sequence of $W_p$ given by Lemma \ref{Lemma_scaling}. 
	Note that our problem is translation invariant and $H^2$-subcritical. Therefore, by Lions' concentration–compactness principle \cite{Lions}, there exists $\{y_n\}\subset\R^N$ such that
	\begin{equation}\label{v}
	   v_n:=\tilde v_n(\cdot+y_n)\rightharpoonup v\not\equiv0
	\end{equation}
	weakly in $H^2(\R^N)$. Note indeed that vanishing is excluded by \eqref{bv_n_properties_p}. Clearly $\{v_n\}$ satisfies properties \eqref{bv_n_Weinstein}-\eqref{bv_n_properties_2}. From \eqref{properties_v} and the Brezis-Lieb lemma \cite{Brezis-Lieb1983PAMS} one has then
	\begin{equation}\label{chain_0}
    	\begin{split}
        	C_{N,p}&=\lim_{n\to+\infty}\frac{\|v_n\|_p^p}{\|v_n\|_{D^{2,2}}^{\frac{N(p-2)-4}2}\|v_n\|_{D^{1,2}}^{\frac{8-N(p-2)}2}\|v_n\|_2^{p-2}}\\
        	&=\lim_{n\to+\infty}\frac{\|v_n\|_p^p}{\|v_n\|_2^{p-2}}\\
        	&=\lim_{n\to+\infty}\frac{\|v\|_p^p}{\|v_n\|_2^{p-2}}+\lim_{n\to+\infty}\frac{\|v_n-v\|_p^p}{\|v_n\|_2^{p-2}}\,.
    	\end{split}
	\end{equation}
	Since
	\begin{equation}\label{L2_ineq}
	   \frac{\|v\|_2}{\|v_n\|_2}\leqslant1\quad\mbox{and}\quad\frac{\|v_n-v\|_2}{\|v_n\|_2}\leqslant1\,,
	\end{equation}
	applying \eqref{1.4} we deduce
	\begin{equation}\label{chain_1}
    	\begin{split}
        	C_{N,p}&\leqslant C_{N,p}\|v\|_{D^{2,2}}^{\frac{N(p-2)-4}2}\|v\|_{D^{1,2}}^{\frac{8-N(p-2)}2}\|v\|_2^{p-2}\lim_{n\to+\infty}\|v_n\|_2^{-(p-2)}\\	&\quad+C_{N,p}\lim_{n\to+\infty}\|v_n-v\|_{D^{2,2}}^{\frac{N(p-2)-4}2}\|v_n-v\|_{D^{1,2}}^{\frac{8-N(p-2)}2}\|v_n-v\|_2^{p-2}\|v_n\|_2^{-(p-2)}\\
        	&\leqslant C_{N,p}\|v\|_{D^{2,2}}^{\frac{N(p-2)-4}2}\|v\|_{D^{1,2}}^{\frac{8-N(p-2)}2}+C_{N,p}\lim_{n\to+\infty}\|v_n-v\|_{D^{2,2}}^{\frac{N(p-2)-4}2}\|v_n-v\|_{D^{1,2}}^{\frac{8-N(p-2)}2}.
    	\end{split}
	\end{equation}
	Noticing that
	$$\frac{N(p-2)-4}2+\frac{8-N(p-2)}2=2\,,$$
	we may use the Young inequality and get
	\begin{equation}\label{chain_2}
	\begin{split}
	C_{N,p}&\leqslant C_{N,p}\left(\frac{N(p-2)-4}4\|v\|_{D^{2,2}}^2+\frac{8-N(p-2)}4\|v\|_{D^{1,2}}^2\right)\\
	&\quad+C_{N,p}\lim_{n\to+\infty}\left(\frac{N(p-2)-4}4\|v_n-v\|_{D^{2,2}}^2+\frac{8-N(p-2)}4\|v_n-v\|_{D^{1,2}}^2\right)\\
	&=C_{N,p}\frac{N(p-2)-4}4\lim_{n\to+\infty}\left(\|v\|_{D^{2,2}}^2+\|v_n-v\|_{D^{2,2}}^2\right)\\
	&\quad+C_{N,p}\frac{8-N(p-2)}4\lim_{n\to+\infty}\left(\|v\|_{D^{1,2}}^2+\|v_n-v\|_{D^{1,2}}^2\right)\\
	&=C_{N,p}\lim_{n\to+\infty}\left(\|v_n\|_{D^{2,2}}^2+\|v_n\|_{D^{1,2}}^2\right)=C_{N,p}\,,
	\end{split}
	\end{equation}
	by \eqref{bv_n_properties_1} and again the Brezis-Lieb lemma. This implies that all the inequalities in \eqref{chain_0}-\eqref{chain_2} have to be indeed equalities. In particular, from \eqref{L2_ineq} we infer that
	\begin{equation*}
	\|v\|_2=0\qquad\mathrm{or}\qquad\lim_{n\to+\infty}\|v_n-v\|_2=0\,.
	\end{equation*}
	Since we know $v\not\equiv0$ by \eqref{v}, then necessarily
	\begin{equation}\label{L^2_limit}
	\lim_{n\to+\infty}\|v_n-v\|_2=0\,.
	\end{equation}
	Hence, using the interpolation inequality
	\begin{equation*}
	\begin{split}
	\|v_n-v\|_{D^{1,2}}&\leq C\|v_n-v\|_2^{1/2}\|v_n-v\|_{D^{2,2}}^{1/2}\\
	&=o_n(1)\left(\|v_n\|_{D^{2,2}}+\|v\|_{D^{2,2}}\right)=o_n(1)
	\end{split}
	\end{equation*}
	by \eqref{properties_v} and since $v\in H^2(\R^N)$. Hence, from the last line of \eqref{chain_1} we infer
	\begin{equation*}
	\|v\|_{D^{2,2}}^{\frac{N(p-2)-4}2}\|v\|_{D^{1,2}}^{\frac{8-N(p-2)}2}=1\,.
	\end{equation*}
	Combined with the fact that $\|v\|_{D^{2,2}}\leq\displaystyle{\liminf_{n\to+\infty}\,}\|v_n\|_{D^{2,2}}=1$ and $\|v\|_{D^{1,2}}\leq\displaystyle{\liminf_{n\to+\infty}\,}\|v_n\|_{D^{1,2}}=1$ by \eqref{bv_n_properties_1}, this yields $\|v\|_{D^{2,2}}=1$ and $\|v\|_{D^{1,2}}=1$. Therefore, again by the Brezis-Lieb lemma, one infers that
	$$\lim_{n\to+\infty}\|v_n-v\|_{D^{2,2}}=0\quad\mbox{and}\quad\lim_{n\to+\infty}\|v_n-v\|_{D^{1,2}}^2=0$$
	which, together with \eqref{L^2_limit}, yield $v_n\to v$ in $H^2(\R^N)$, and therefore, by continuous embedding, also in $L^p(\R^N)$. Since $\{v_n\}$ was a minimizing sequence for the Weinstein functional, such convergences imply that indeed $v$ is a nontrivial minimizer for $W_p$.
\end{proof}

\begin{lemma}\label{Lemma_eq}
	Let $v\in H^2(\R^N)$ be an optimizer for inequality \eqref{1.4} such that \eqref{properties_v} holds. Then there exist parameters $\lambda,\mu\neq0$ depending only on $N$ and $p$ such that
	\begin{equation}\label{Q}
	   Q:=\lambda v(\mu\,\cdot)
	\end{equation}
	weakly solves
	\begin{equation}\label{eq_Q}
	   \varepsilon\Delta^2Q-\Delta Q+\omega Q=|Q|^{p-2}Q\quad\mbox{in}\ \,\R^N,
	\end{equation}
	for $\omega>0$ defined by \eqref{omega}.
\end{lemma}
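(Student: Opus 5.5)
The plan is to write down the Euler--Lagrange equation satisfied by the minimizer $v$ of the Weinstein functional $W_p$ from Lemma \ref{Lemma_minimizer}. I would then choose the amplitude $\lambda$ and the dilation $\mu$ so that the three linear coefficients of this equation become $\varepsilon$, $1$ and $\omega$, and the nonlinear coefficient becomes $1$.

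Set $a:=\frac{N(p-2)-4}{2}$ and $b:=\frac{8-N(p-2)}{2}$; both are positive for $p\in(2+\frac4N,2+\frac8N)$. Also set $c_0:=\|v\|_2^2>0$ and $P:=\|v\|_p^p>0$. By \eqref{properties_v} we have $P=C_{N,p}\,c_0^{\frac{p-2}2}$. Since $v\neq0$, all the norms in $W_p$ are positive at $v$, and $\|\nabla v\|_2=\|\Delta v\|_2=1\neq 0$. Hence $\log W_p$ is Gateaux differentiable at $v$ in every direction $\varphi\in H^2(\R^N)$, using the $C^1$ character of $u\mapsto\|u\|_p^p$ on $H^2$ for $p<2^*$. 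Minimality gives $\frac{\dd}{\dd t}\log W_p(v+t\varphi)\big|_{t=0}=0$. With the normalization \eqref{properties_v} this reads
\begin{equation*}
a\int_{\R^N}\Delta v\,\Delta\varphi\dd x+b\int_{\R^N}\nabla v\cdot\nabla\varphi\dd x+\frac{p-2}{c_0}\int_{\R^N}v\varphi\dd x=\frac pP\int_{\R^N}|v|^{p-2}v\varphi\dd x .
\end{equation*}
In other words, $v$ weakly solves $a\Delta^2v-b\Delta v+\frac{p-2}{c_0}v=\frac pP|v|^{p-2}v$.

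Next I insert $Q(x)=\lambda v(\mu x)$ with $\lambda,\mu>0$. By the chain rule, after a change of variables in the weak formulation,
\begin{equation*}
\varepsilon\Delta^2Q-\Delta Q+\omega Q-|Q|^{p-2}Q=\lambda\big[\varepsilon\mu^4\Delta^2v-\mu^2\Delta v+\omega v-\lambda^{p-2}|v|^{p-2}v\big](\mu x).
\end{equation*}
It therefore suffices to make the bracket a positive multiple $k$ of the equation for $v$, that is,
\begin{equation*}
\varepsilon\mu^4=ka,\qquad \mu^2=kb,\qquad \omega=k\frac{p-2}{c_0},\qquad \lambda^{p-2}=k\frac pP .
\end{equation*}
The first two equations give $\mu^2=\frac{a}{b\varepsilon}$ and $k=\frac{a}{b^2\varepsilon}$. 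The third then yields $\omega=(p-2)\,a\,b^{-2}\|v\|_2^{-2}\varepsilon^{-1}$, which is exactly \eqref{omega}, and $\omega>0$ because $a,b>0$. The last equation determines $\lambda=\big(\frac{pa}{Pb^2\varepsilon}\big)^{1/(p-2)}$, which is positive.

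The only delicate point is the rigorous derivation of the Euler--Lagrange equation: checking differentiability of each factor of $W_p$ at $v$, which is where $v\neq0$ and the nonvanishing of the norms are needed. The rest is bookkeeping of the scaling exponents.

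One caveat concerns the statement itself. The parameters $\lambda,\mu$ constructed this way depend on $\varepsilon$ and on the quantities $\|v\|_2$, $\|v\|_p$ of the chosen optimizer, not only on $N$ and $p$. So ``depending only on $N$ and $p$'' should be read as ``for fixed $\varepsilon$ and fixed optimizer $v$''. Indeed $\omega$ itself, given by \eqref{omega}, depends on $\|v\|_2$.
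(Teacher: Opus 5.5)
Your proposal is correct and matches the paper's proof. Both derive the Euler--Lagrange equation for the minimizer $v$ of $W_p$, normalize it via \eqref{properties_v}, and rescale $Q=\lambda v(\mu\,\cdot)$ with the same choices $\mu^2=\frac{a}{b\varepsilon}$ and $\lambda^{p-2}=\frac{pa}{b^2\varepsilon\|v\|_p^p}$, which yield $\omega$ as in \eqref{omega}; your multiplier $k=\mu^2/b$ is just the paper's normalization of the coefficient of $-\Delta Q$. Your caveat is also accurate: in the paper's proof too, $\lambda$ and $\mu$ depend on $\varepsilon$ and on $\|v\|_p$, so the phrase ``depending only on $N$ and $p$'' in the statement is imprecise.
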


\begin{proof}
	If $v$ is an optimizer of \eqref{1.4} then it is a minimizer of $W_p$. Hence, $v$ satisfies
	\begin{equation*}
	   \left.\frac\dd{\dd t}\right|_{t=0}W_p(v+t\phi)=0
	\end{equation*}
	for all $\phi\in C^{\infty}_0(\R^N)$, from which, setting
	\begin{equation}\label{alpha_beta}
	   \alpha:=\frac{N(p-2)-4}{2}\quad\ \mbox{and}\ \quad\beta:=\frac{8-N(p-2)}{2}\,,
	\end{equation}
	it is standard to prove that $v$ is a weak solution of
	\begin{equation*}
	   \frac\alpha{\|\Delta v\|_2^2}\Delta^2v-\frac\beta{\|\nabla v\|_2^2}\Delta v+\frac{p-2}{\|v\|_2^2}v=\frac p{\|v\|_p^p}|v|^{p-2}v\quad\text{in }\R^N.
	\end{equation*}
	Hence by \eqref{properties_v} and since for $k\in\N$ one has $\Delta^kv(x)=\lambda^{-1}\mu^{-2k}\Delta^kQ(\frac x\mu)$, the corresponding equation that $Q$ satisfies is
	\begin{equation}\label{eq_Q_first}
	   \frac\alpha\beta\mu^{-2}\Delta^2Q-\Delta Q+\frac{(p-2)\mu^2}{\beta\|v\|_2^2}\,Q=\frac{p\lambda^{2-p}\mu^2}{\beta\|v\|_p^p}|Q|^{p-2}Q\quad\mbox{in }\R^N.
	\end{equation}
	Choosing now
	\begin{equation*}
	   \mu=\sqrt{\frac\alpha{\beta\varepsilon}}\quad\ \mbox{and}\ \quad\lambda=\left(\frac{p\alpha}{\beta^2\varepsilon\|v\|_p^p}\right)^\frac1{p-2},
	\end{equation*}
	then equation \eqref{eq_Q_first} reduces to \eqref{eq_Q} where $\omega$ is given by
	$$\omega=\frac{(p-2)\alpha}{\beta^2\varepsilon\|v\|_2^2}\,,$$
    that is \eqref{omega} by \eqref{alpha_beta}.
\end{proof}

\begin{proof}[Proof of Proposition \ref{Theorem4.1}]
	It directly follows from Lemmas \ref{Lemma_scaling}-\ref{Lemma_eq}. Note that $Q$ given by Lemma \ref{Lemma_eq} is a minimizer of the Weinstein functional, since $W_p$ is invariant under the scaling used to define $Q$ from $u$ in \eqref{Q}, as we proved in \eqref{Wp_invariant}.
\end{proof}

\texorpdfstring{$c_\varepsilon$}{c_epsilon}

\subsection{Any optimizer is an energy GSS with critical mass \texorpdfstring{$\boldsymbol c_\varepsilon$}{c_epsilon} (Proof of Proposition \ref{Theorem4.2})}

\begin{lemma}
	For $Q$ obtained in Lemma \ref{Lemma_eq}, one has the following identities:
	\begin{equation}\label{Q_norms_nabla_delta}
	   \int_{\R^N}|\nabla Q|^2\dd x=\frac{8-N(p-2)}{N(p-2)-4}\,\varepsilon\int_{\R^N}|\Delta Q|^2\dd x\,,
	\end{equation}
	\begin{equation}\label{Q_norms_nabla_p}
	   \int_{\R^N}|\nabla Q|^2\dd x=\frac{8-N(p-2)}{2p}\int_{\R^N}|Q|^p\dd x\,,
	\end{equation}
	\begin{equation}\label{Q_norms_Delta_p}
	   \int_{\R^N}|\Delta Q|^2\dd x=\frac{N(p-2)-4}{2p}\,\varepsilon^{-1}\int_{\R^N}|Q|^p\dd x\,.
	\end{equation}
	Moreover,
	\begin{equation}\label{Q_identity}
	   \frac{\varepsilon}2\int_{\R^N}|\Delta Q|^2\dd x+\frac12\int_{\R^N}|\nabla Q|^2\dd x-\frac1p\int_{\R^N}|Q|^p\dd x=0
	\end{equation}
	and
	\begin{equation}\label{Q_mass}
	   \int_{\R^N}|Q|^2\dd x\geqslant c_\varepsilon\,.
	\end{equation}
\end{lemma}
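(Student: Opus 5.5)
The plan is to get the three norm identities directly from the explicit scaling $Q=\lambda v(\mu\,\cdot)$ of Lemma \ref{Lemma_eq}, using the normalization \eqref{properties_v}. Then \eqref{Q_identity} will follow by adding them, and \eqref{Q_mass} will follow from Theorem \ref{TheoremB}.

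\textbf{Step 1: the norms of $Q$.} A change of variables together with $\|\nabla v\|_2=\|\Delta v\|_2=1$ gives
\begin{equation*}
\int_{\R^N}|\nabla Q|^2\dd x=\lambda^2\mu^{2-N},\qquad \int_{\R^N}|\Delta Q|^2\dd x=\lambda^2\mu^{4-N},\qquad \int_{\R^N}|Q|^p\dd x=\lambda^p\mu^{-N}\|v\|_p^p .
\end{equation*}
With $\alpha,\beta$ as in \eqref{alpha_beta} and the choice $\mu^2=\frac{\alpha}{\beta\varepsilon}$, the ratio of the first two integrals is $\mu^{-2}=\frac{\beta\varepsilon}{\alpha}$. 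This is exactly \eqref{Q_norms_nabla_delta}.

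\textbf{Step 2: the $L^p$ norm.} By the choice of $\lambda$, the coefficient in front of $|Q|^{p-2}Q$ in \eqref{eq_Q_first} equals one, i.e.
\begin{equation*}
\lambda^{p-2}=\frac{p\mu^2}{\beta\|v\|_p^p}.
\end{equation*}
Substituting this into the expression for $\int|Q|^p$ gives
\begin{equation*}
\int_{\R^N}|Q|^p\dd x=\lambda^2\mu^{-N}\cdot\frac{p\mu^2}{\beta}=\frac p\beta\int_{\R^N}|\nabla Q|^2\dd x .
\end{equation*}
This is \eqref{Q_norms_nabla_p}, since $\beta=\frac{8-N(p-2)}2$. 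Combining it with \eqref{Q_norms_nabla_delta} yields $\varepsilon\int|\Delta Q|^2=\frac{\alpha}{p}\int|Q|^p$, which is \eqref{Q_norms_Delta_p}.

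\textbf{Step 3: the energy identity.} Adding the two relations gives
\begin{equation*}
\varepsilon\|\Delta Q\|_2^2+\|\nabla Q\|_2^2=\frac{\alpha+\beta}{p}\|Q\|_p^p=\frac2p\|Q\|_p^p,
\end{equation*}
because $\alpha+\beta=2$. Multiplying by $\frac12$ gives \eqref{Q_identity}, that is, $E_\varepsilon(Q)=0$.

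\textbf{Step 4: the mass bound.} Set $c:=\|Q\|_2^2>0$ and argue by contradiction, assuming $c<c_\varepsilon$. Lemma \ref{Lemma_Bonheure} gives $m_\varepsilon(c)=0$. Since $Q\in S_c$ and $E_\varepsilon(Q)=0$ by Step 3, $Q$ attains $m_\varepsilon(c)$ and is therefore an energy GSS with mass $c<c_\varepsilon$. This contradicts Theorem \ref{TheoremB}(3), which excludes energy GSS for masses below $c_\varepsilon$. Hence $c\geqslant c_\varepsilon$, which is \eqref{Q_mass}.

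The argument is mostly bookkeeping. The only points needing care are tracking the exponents of $\lambda$ and $\mu$ in Steps 1 and 2, and, in Step 4, invoking exactly the nonexistence part of Theorem \ref{TheoremB} rather than a statement about minimizing sequences.
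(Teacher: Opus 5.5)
Your proposal is correct and follows essentially the same route as the paper. The three identities come from the explicit scaling $Q=\lambda v(\mu\,\cdot)$, the normalization \eqref{properties_v} and the choice of $\lambda,\mu$; the energy identity comes from $\alpha+\beta=2$; and the mass bound comes from Theorem \ref{TheoremB}.

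Your Step 4 is in fact more careful than the paper's one-line justification. The paper asserts that $Q$ is a ground state, although Proposition \ref{Theorem4.1} only provides a weak solution. Your use of $E_\varepsilon(Q)=0$ and Lemma \ref{Lemma_Bonheure} shows that $Q$ would be a minimizer at a subcritical mass, which closes that gap.
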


\begin{proof}
	Recalling that $Q$ was defined by \eqref{Q}, and that \eqref{properties_v} holds for $v$, we compute
	\begin{equation*}
    	\begin{split}
    	   \frac{\int_{\R^N}|\nabla Q|^2\dd x}{\int_{\R^N}|\Delta Q|^2\dd x}&=\mu^{-2}\frac{\int_{\R^N}|\nabla v|^2\dd x}{\int_{\R^N}|\Delta v|^2\dd x}=\mu^{-2}=\frac{\beta\varepsilon}\alpha=\frac{8-N(p-2)}{N(p-2)-4}\varepsilon
    	\end{split}
	\end{equation*}
	by \eqref{alpha_beta}. Similarly,
	\begin{equation*}
    	\begin{split}
    	   \frac{\int_{\R^N}|\nabla Q|^2\dd x}{\int_{\R^N}|Q|^p\dd x}&=\frac{\lambda^2\mu^{2-N}\int_{\R^N}|\nabla v|^2\dd x}{\lambda^p\mu^{-N}\int_{\R^N}|v|^p\dd x}=\frac\beta p=\frac{8-N(p-2)}{2p}\,.
    	\end{split}
	\end{equation*}
	The third identity plainly follows from the first two. Hence, using \eqref{Q_norms_nabla_p}-\eqref{Q_norms_Delta_p}, we get
	\begin{equation*}
    	\begin{split}
    	   &\frac{\varepsilon}2\int_{\R^N}|\Delta Q|^2\dd x+\frac12\int_{\R^N}|\nabla Q|^2\dd x-\frac1p\int_{\R^N}|Q|^p\dd x\\
    	   &=\frac{N(p-2)-4}{4p}\int_{\R^N}|Q|^p\dd x+\frac{8-N(p-2)}{4p}\int_{\R^N}|Q|^p\dd x-\frac1p\int_{\R^N}|Q|^p\dd x=0\,.
    	\end{split}
	\end{equation*}
	Since by Theorem \ref{Theorem4.1} $Q$ is a GSS of \eqref{eq_Q'}, necessary its norm has to be bigger to or equal than $c_\varepsilon$ by Theorem \ref{TheoremB}, that is \eqref{Q_mass}.
\end{proof}

\begin{proof}[Proof of Proposition \ref{Theorem4.2}]
	The fact that $E_\varepsilon(Q)=0$ follows from \eqref{Q_identity}. Now we show that $Q$ has critical mass. Since $Q$ is a minimizer of the Weinstein functional $W_p$, applying \eqref{Q_norms_nabla_delta}-\eqref{Q_norms_Delta_p}, one has
	\begin{equation*}
    	\begin{split}
    	   C_{N,p}&=\frac{\|Q\|_p^p}{\|Q\|_{D^{2,2}}^{\frac{N(p-2)-4}2}\|Q\|_{D^{1,2}}^{\frac{8-N(p-2)}2}\|Q\|_2^{p-2}}\\
    	   &=\frac{\|Q\|_p^p}{\|Q\|_2^{p-2}}\left[\frac{N(p-2)-4}{2p}\varepsilon^{-1}\int_{\R^N}|Q|^p\dd x\right]^{-\frac{N(p-2)-4}4}\\
    	   &\quad\times\left[\frac{8-N(p-2)}{2p}\int_{\R^N}|Q|^p\dd x\right]^{-\frac{8-N(p-2)}4}\\
    	   &=\left[\frac{2p}{N(p-2)-4}\right]^{\frac{N(p-2)-4}4}\left[\frac{2p}{8-N(p-2)}\right]^{\frac{8-N(p-2)}4}\frac{\varepsilon^{\frac{N(p-2)-4}4}}{\|Q\|_2^{p-2}}\,.
    	\end{split}
	\end{equation*}
	Combining this with \eqref{c_epsilon}, one easily gets $\|Q\|_2^2=c_\varepsilon$.
\end{proof}

\subsection{Any energy GSS with critical mass \texorpdfstring{$\boldsymbol c_\varepsilon$}{c_epsilon} is an optimizer for \texorpdfstring{\eqref{1.4}}{GN_C} (Proof of Proposition \ref{Theorem4.4})}

\begin{proof}[Proof of Proposition \ref{Theorem4.4}]
	Let $(u_{c_\varepsilon},\omega_{c_\varepsilon})\in H^2(\R^N)\times\R$ be a solution of \eqref{BS} with $c=c_\varepsilon$.
	From Theorem \ref{TheoremB}, we know that
	\begin{equation*}
	   0=E(u_{c_\varepsilon})=m_\varepsilon(c_\varepsilon)=\inf_{u\in S_{c_\varepsilon}}E(u)\,.
	\end{equation*}
	Set $u_{c_\varepsilon,t}(x):=t^{\frac N2}u_{c_\varepsilon}(tx)$. Then $u_{c_\varepsilon,t}\in S_{c_\varepsilon}$ for all $t>0$ and
	\begin{equation*}
		E(u_{c_\varepsilon,t})=\varepsilon\frac{t^4}2\|u_{c_\varepsilon}\|_{D^{2,2}}^2+\frac{t^2}2\|u_{c_\varepsilon}\|_{D^{1,2}}^2-\frac{t^{\frac{N(p-2)}2}}{p}\int_{\R^N}|u_{c_\varepsilon}|^p\dd x\,.
	\end{equation*}
	Setting
	\begin{equation*}
		h_\varepsilon(t):=\frac{E(u_{c_\varepsilon,t})}{t^2}=\varepsilon\frac{t^2}2\|u_{c_\varepsilon}\|_{D^{2,2}}^2+\frac12\|u_{c_\varepsilon}\|_{D^{1,2}}^2-\frac{t^{\frac{N(p-2)}2-2}}p\int_{\R^N}|u_{c_\varepsilon}|^p\dd x\,,
	\end{equation*}
	it is easy to show that
	\begin{equation*}
		t=t_\varepsilon:=\left[\frac{N(p-2)-4}{2p}\cdot\frac{\int_{\R^N}|u_{c_\varepsilon}|^p\dd x}{\|u_{c_\varepsilon}\|_{D^{2,2}}^2}\varepsilon^{-1}\right]^{\frac2{8-N(p-2)}}
	\end{equation*}
    is the unique critical point of $h_\varepsilon$, which is a minimum. From $E(u_{c_\varepsilon})=0$, we infer then $t_\varepsilon=1$, that is
	\begin{equation}\label{7.1}
	   \|u_{c_\varepsilon}\|_{D^{2,2}}^2=\frac{N(p-2)-4}{2p}\varepsilon^{-1}\|u_{c_\varepsilon}\|_p^p\,,
	\end{equation}
    which in turn implies
	\begin{equation}\label{7.2}
    	\begin{split}
        	\|u_{c_\varepsilon}\|_{D^{1,2}}^2=&\frac2p\|u_{c_\varepsilon}\|_p^p-\varepsilon\|u_{c_\varepsilon}\|_{D^{2,2}}^2\\
        	&=\left(\frac2{p}-\frac{N(p-2)-4}{2p}\right)\|u_{c_\varepsilon}\|_p^p\\
        	&=\frac{8-N(p-2)}{2p}\|u_{c_\varepsilon}\|_p^p\,,
    	\end{split}
	\end{equation}
	having exploited again $E_\varepsilon(u_{c_\varepsilon})=0$. Combining \eqref{7.1} and \eqref{7.2}, one has
	\begin{equation*}
    	\begin{split}
        	\|u_{c_\varepsilon}\|_{D^{2,2}}^{\frac{N(p-2)-4}2}&\|u_{c_\varepsilon}\|_{D^{1,2}}^{\frac{8-N(p-2)}2}=\left[\frac{N(p-2)-4}{2p}\varepsilon^{-1}\|u_{c_\varepsilon}\|_p^p\right]^{\frac{N(p-2)-4}4}\!\left[\frac{8-N(p-2)}{2p}\|u_{c_\varepsilon}\|_p^p\right]^{\frac{8-N(p-2)}4}\\
        	&=\left[\frac{N(p-2)-4}{2p}\right]^{\frac{N(p-2)-4}4}\!\left[\frac{8-N(p-2)}{2p}\right]^{\frac{8-N(p-2)}4}\!\!\varepsilon^{\frac{4-N(p-2)}4}\|u_{c_\varepsilon}\|_p^p\\
        	&=C_{N,p}^{-1}c_\varepsilon^{-\frac{p-2}2}\|u_{c_\varepsilon}\|_p^p\\
        	&=C_{N,p}^{-1}\|u_{c_\varepsilon}\|_2^{-(p-2)}\|u_{c_\varepsilon}\|_p^p\,,
    	\end{split}
	\end{equation*}
	by Proposition \ref{Theorem4.2} and $\|u_{c_\varepsilon}\|_2^2=c_\varepsilon$. This shows that $u_{c_\varepsilon}$ is an optimizer for the Weinstein functional and thus for \eqref{1.4}.
\end{proof}

\section{On the relationship between action GSS and energy GSS:\\ Proof of Theorem \ref{Theorem1.3}}

For $u\in H^2(\R^N)$ and $t>0$, set
$$u^t:=t^Nu(t\,\cdot)\,.$$
First we show that solutions of \eqref{BS} maximize the action $I_\omega$ within the set $\{u^t\}_{t>0}\,$. 
\begin{lemma}\label{Lemma5.1}
	Let $p\in(2+\frac4N,2+\frac8N)$, $\varepsilon>0$, and $u\in H^2(\R^N)$ be any weak solution of \eqref{BS}. Then
	\begin{equation*}
		I_\omega(u)\geqslant I_\omega(u^t)\qquad\mbox{for all}\,\ t>0\,.
	\end{equation*}
\end{lemma}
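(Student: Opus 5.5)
The plan is to reduce the statement to a one-variable problem along the fibre $t\mapsto u^t$, and to use the Nehari and Pohozaev identities satisfied by $u$ to locate the maximum of that function at $t=1$.

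\textbf{Step 1: the fibre map.} Set $A:=\|\Delta u\|_2^2$, $B:=\|\nabla u\|_2^2$, $C:=\|u\|_2^2$ and $D:=\|u\|_p^p$. A change of variables in $u^t=t^Nu(t\,\cdot)$ gives
\begin{equation*}
f(t):=I_\omega(u^t)=\frac{\varepsilon}{2}At^{N+4}+\frac12Bt^{N+2}+\frac{\omega}{2}Ct^{N}-\frac1pDt^{N(p-1)}.
\end{equation*}
Hence $f'(t)=t^{N-1}\varphi(t)$, where
\begin{equation*}
\varphi(t)=\frac{N+4}{2}\varepsilon At^4+\frac{N+2}{2}Bt^2+\frac N2\omega C-\frac{N(p-1)}{p}Dt^{N(p-2)}.
\end{equation*}

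\textbf{Step 2: $t=1$ is a critical point.} Since $u$ weakly solves \eqref{BS}, it lies on the Nehari set:
\begin{equation*}
\varepsilon A+B+\omega C=D .
\end{equation*}
It also satisfies the Pohozaev identity, obtained by differentiating $s\mapsto I_\omega(u(s\,\cdot))$ at $s=1$:
\begin{equation*}
\frac{N-4}{2}\varepsilon A+\frac{N-2}{2}B+\frac N2\omega C-\frac NpD=0 .
\end{equation*}
Because $u^t$ combines the amplitude scaling $t^N$ with the dilation $u(t\,\cdot)$, one gets $f'(1)=N\langle I_\omega'(u),u\rangle+\text{(Pohozaev expression)}$. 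A direct check of the coefficients confirms this: $\varphi(1)$ equals $N$ times the Nehari expression plus the Pohozaev expression. Therefore $f'(1)=0$.

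\textbf{Step 3: $t=1$ is the global maximum.} The function $\varphi$ is a generalized polynomial with real exponents $0<2<4<N(p-2)$. The condition $p>2+\frac4N$ is exactly what places the negative term last in this ordering. The coefficient signs are $+,+,+,-$, so there is exactly one sign change. By Descartes' rule of signs for real exponents (equivalently, $t^{-N(p-2)}\varphi(t)$ is strictly decreasing), $\varphi$ has at most one positive zero. Moreover $\varphi(0^+)=\frac N2\omega C>0$ and $\varphi(t)\to-\infty$ as $t\to+\infty$, so $\varphi$ has exactly one positive zero. By Step 2 this zero is $t=1$. Consequently $f$ is increasing on $(0,1)$ and decreasing on $(1,\infty)$, which gives $I_\omega(u^t)\le I_\omega(u)$ for all $t>0$. (If $\omega\le0$ occurred, the constant term would be nonpositive and the argument would need a small adjustment; but a weak solution of \eqref{BS} in $H^2$ has $\omega>0$, which is the relevant case here.)

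\textbf{Main obstacle.} The delicate point is justifying the Pohozaev identity for a merely weak $H^2$ solution of the fourth-order equation. I would first use elliptic regularity for $\varepsilon\Delta^2-\Delta+\omega$ together with the subcriticality $p<2^*$, bootstrapping in Sobolev spaces, to get $u\in H^4\cap W^{4,q}$. Then I would test the equation against $x\cdot\nabla u$, using a cutoff $\psi(x/R)$ and letting $R\to\infty$, and note that all boundary terms vanish thanks to the integrability of $u,\nabla u,\Delta u,\nabla\Delta u$. Alternatively, one may simply cite the standard Pohozaev identity for \eqref{BS}, e.g. from \cite{Bonheure-Casteras-dosSantos-Nascimento2018SIAM}.
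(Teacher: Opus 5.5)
Your argument is correct, but it settles the one-variable problem differently from the paper. Both proofs use the same input, the identity $\varphi(1)=0$, which the paper writes as $N\langle I_\omega'(u),u\rangle-P_\omega(u)=0$.

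The paper never studies $f'$. It uses this identity to eliminate $\varepsilon\|\Delta u\|_2^2$ from $I_\omega(u)-I_\omega(u^t)$, and writes the difference as
\[
\frac{g_1(t)}{2(N+4)}\|\nabla u\|_2^2+\frac{g_2(t)}{2(N+4)}\,\omega\|u\|_2^2+\frac{g_3(t)}{p(N+4)}\|u\|_p^p ,
\]
with three explicit functions $g_i$. A derivative computation shows each is nonnegative, with its only zero at $t=1$.

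You instead show that $t^{-N(p-2)}\varphi(t)$ is strictly decreasing, so $f'$ changes sign only at $t=1$. Your route is shorter, and it is really a general principle for fibre maps whose positive terms all have exponents below the single negative one. The paper's route gives an explicit nonnegative decomposition of the gap. It also shows exactly where each hypothesis enters: $p>2+\frac4N$ only in $g_3$, and the sign of $\omega$ only in the $g_2$-term. Both routes need $\omega\geqslant0$. The paper uses this tacitly, and it holds in the application, where $\omega=\omega(\varepsilon)>0$.

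Two small corrections. First, with your displayed sign convention, $\varphi(1)$ is $N$ times the Nehari expression \emph{minus} the Poho\v zaev expression, not plus. The coefficient of $\varepsilon\|\Delta u\|_2^2$ is $N-\frac{N-4}2=\frac{N+4}2$, and the derivative of $s\mapsto I_\omega(u(s\,\cdot))$ at $s=1$ is minus that expression. This is harmless, since both quantities vanish.

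Second, your parenthetical claim that every $H^2$ weak solution has $\omega>0$ is not justified. It is also unnecessary: with $\omega\geqslant0$, strict monotonicity of $t^{-N(p-2)}\varphi(t)$ together with $\varphi(1)=0$ already gives the sign of $\varphi$ on both sides of $1$, without using $\varphi(0^+)>0$.

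For the Poho\v zaev identity itself, the paper simply cites \cite{Bonheure-Casteras-Gou-Jeanjean2019IMRN}, so your regularity-plus-cutoff sketch is optional.
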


\begin{proof}
	Since $u\in H^2(\R^N)$ solves of \eqref{BS}, it satisfies
	\begin{equation*}
		\langle I_\omega'(u),u\rangle=\varepsilon\|u\|^2_{D^{2,2}}+\|u\|^2_{D^{1,2}}+\omega\|u\|_2^2-\|u\|_p^p=0
	\end{equation*}
	and moreover it is well-known that $u$ belongs to the zero set of the Poho\v zaev functional associated to $I_\omega$, namely
	\begin{equation*}
		P_\omega(u):=\varepsilon\frac{N-4}2\|u\|^2_{D^{2,2}}+\frac{N-2}2\|u\|^2_{D^{1,2}}+\omega\frac{N}2\|u\|_2^2-\frac Np\|u\|_p^p=0\,,
	\end{equation*}
	see \cite[Lemma 2.1]{Bonheure-Casteras-Gou-Jeanjean2019IMRN}. This implies that
	\begin{equation*}
		\begin{split}
			0&=N\langle I_\omega'(u),u\rangle-P_\omega(u)\\
			&=\varepsilon\frac{N+4}2\|u\|^2_{D^{2,2}}+\frac{N+2}2\|u\|^2_{D^{1,2}}+\omega\frac{N}2\|u\|_2^2-\frac{N(p-1)}{p}\|u\|_p^p\,.
		\end{split}
	\end{equation*}
	Hence we can compute
	\begin{equation}\label{I-I}
		\begin{split}
			I_\omega(u)-I_\omega(u^t)&=\frac{1-t^{N+4}}2\varepsilon\|u\|^2_{D^{2,2}}+\frac{1-t^{N+2}}2\|u\|^2_{D^{1,2}}\\
            &\quad+\frac{1-t^{N}}2\omega\|u\|_2^2-\frac{1-t^{N(p-1)}}{p}\|u\|_p^p\\
			&=\frac{1-t^{N+2}}2\|u\|^2_{D^{1,2}}-(1-t^{N+4})\frac{N+2}{2(N+4)}\|u\|^2_{D^{1,2}}\\
            &\quad+\frac{1-t^{N}}2\omega\|u\|_2^2-\omega(1-t^{N+4})\frac{N}{2(N+4)}\|u\|_2^2\\
			&\quad+(1-t^{N+4})\frac{N(p-1)}{p(N+4)}\|u\|_p^p-\frac{1-t^{N(p-1)}}{p}\|u\|_p^p\\
			&=\frac{2-(N+4)t^{N+2}+(N+2)t^{N+4}}{2(N+4)}\|u\|^2_{D^{1,2}}\\
			&\quad+\frac{4-(N+4)t^{N}+Nt^{N+4}}{2(N+4)}\omega\|u\|_2^2\\
			&\quad+\frac{N(p-2)-4-N(p-1)t^{N+4}+(N+4)t^{N(p-1)}}{p(N+4)}\|u\|_p^p\,.
		\end{split}
	\end{equation}
	Set now
	\begin{equation*}
		\begin{split}
			g_1(t)&:=2-(N+4)t^{N+2}+(N+2)t^{N+4}\\
			g_2(t)&:=4-(N+4)t^{N}+Nt^{N+4}\\
			g_3(t)&:=N(p-2)-4-N(p-1)t^{N+4}+(N+4)t^{N(p-1)}\,,
		\end{split}
	\end{equation*}
	from which
	\begin{equation*}
		\begin{split}
			g_1'(t)&=(N+4)(N+2)t^{N+1}[t^2-1]\\
			g_2'(t)&=N(N+4)t^{N-1}[t^{4}-1]\\
			g_3'(t)&=-N(p-1)(N+4)t^{N+3}+N(p-1)(N+4)t^{N(p-1)-1}\\
			&=N(p-1)(N+4)t^{N+3}[t^{N(p-2)-4}-1]\,.
		\end{split}
	\end{equation*}
	Since $p>2+\frac4N$ then $g_1(1)=g_2(1)=g_3(1)=0$ and $1$ is the global minimum point of $g_i$, $i\in\{1,2,3\}$ on $(0,+\infty)$. This implies that the right-hand side of \eqref{I-I} is nonnegative, with equality if and only if $t=1$, that is $u^t\equiv u$.
\end{proof}

\begin{lemma}[Energy GSS $\Rightarrow$ Action GSS]\label{Lemma5.3}
	For any $\varepsilon>0$, let $u_{c_\varepsilon}\in H^2(\R^N)$ be any energy GSS of \eqref{BS} with $c=c_\varepsilon$. Then $u_{c_\varepsilon}$ is an action GSS of \eqref{BS} for $\omega=\omega(\varepsilon)$.
\end{lemma}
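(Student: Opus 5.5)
The plan is to identify the Lagrange multiplier of $u_{c_\varepsilon}$ with $\omega(\varepsilon)$, and then compare actions by moving along the scaling $w\mapsto w^t$ of Lemma \ref{Lemma5.1}. That scaling changes the mass, and at mass $c_\varepsilon$ the energy is bounded below by $m_\varepsilon(c_\varepsilon)=0$.

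\emph{Step 1: the multiplier.} Let $(u_{c_\varepsilon},\omega_{c_\varepsilon})$ solve \eqref{BS}. By Proposition \ref{Theorem4.4}, $u_{c_\varepsilon}$ is an optimizer of \eqref{1.4}. Let $v$ be its rescaling $v(x)=\Lambda_1 u_{c_\varepsilon}(\Lambda_2 x)$ as in the proof of Lemma \ref{Lemma_scaling}, so that $\|\nabla v\|_2=\|\Delta v\|_2=1$. Since $W_p$ is scale invariant by \eqref{Wp_invariant}, $v$ is a normalized optimizer as in Lemma \ref{Lemma_minimizer}.

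Identities \eqref{7.1}--\eqref{7.2} give $\|\nabla u_{c_\varepsilon}\|_2^2/\|\Delta u_{c_\varepsilon}\|_2^2=\beta\varepsilon/\alpha=\mu^{-2}$ and $\|\nabla u_{c_\varepsilon}\|_2^2/\|u_{c_\varepsilon}\|_p^p=\beta/p$, with $\alpha,\beta$ as in \eqref{alpha_beta}. From these, the scaling parameters are forced to be exactly the $\mu,\lambda$ of Lemma \ref{Lemma_eq}, so $u_{c_\varepsilon}=\lambda v(\mu\,\cdot)$. Lemma \ref{Lemma_eq} then says $u_{c_\varepsilon}$ solves \eqref{eq_Q} with $\omega=\omega(\varepsilon)$ computed from this $v$.

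As an independent check, combine the Nehari identity $\varepsilon\|\Delta u\|_2^2+\|\nabla u\|_2^2+\omega c_\varepsilon=\|u\|_p^p$ with $E_\varepsilon(u)=0$. This gives $\omega_{c_\varepsilon}c_\varepsilon=\frac{p-2}{p}\|u_{c_\varepsilon}\|_p^p$, which determines the multiplier uniquely. Hence $\omega_{c_\varepsilon}=\omega(\varepsilon)$ (testing the equation against $u_{c_\varepsilon}$ pins down the multiplier). In particular $u_{c_\varepsilon}\in\mathcal N$ for $\omega=\omega(\varepsilon)$, and
\[
I_{\omega}(u_{c_\varepsilon})=E_\varepsilon(u_{c_\varepsilon})+\tfrac{\omega}{2}c_\varepsilon=\tfrac{\omega}{2}c_\varepsilon .
\]

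\emph{Step 2: minimality on $\mathcal N$.} Since action GSS exist for every $p\in(2,2^*)$, it is enough to show $I_\omega(w)\ge \frac\omega2 c_\varepsilon$ for an action GSS $w$; this yields $\sigma_\omega\ge I_\omega(u_{c_\varepsilon})$, and the reverse inequality is automatic from $u_{c_\varepsilon}\in\mathcal N$. Such a $w$ is a weak solution of \eqref{BS}, so Lemma \ref{Lemma5.1} gives $I_\omega(w)\ge I_\omega(w^t)$ for every $t>0$.

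Since $\|w^t\|_2^2=t^N\|w\|_2^2$, choose $t$ with $\|w^t\|_2^2=c_\varepsilon$. Then Lemma \ref{Lemma_Bonheure} gives
\[
I_\omega(w^t)=E_\varepsilon(w^t)+\tfrac\omega2 c_\varepsilon\ \ge\ m_\varepsilon(c_\varepsilon)+\tfrac\omega2 c_\varepsilon=\tfrac\omega2 c_\varepsilon=I_\omega(u_{c_\varepsilon}).
\]
Thus $I_\omega(u_{c_\varepsilon})=\sigma_\omega$, i.e.\ $u_{c_\varepsilon}$ is an action GSS.

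\emph{Main obstacle.} Step 2 is essentially a direct consequence of Lemma \ref{Lemma5.1} and Lemma \ref{Lemma_Bonheure}. The delicate point is Step 1: showing that $\omega(\varepsilon)$ is well defined, since a priori different normalized optimizers $v$ might have different $\|v\|_2$.

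I would resolve this by relating $\|v\|_2$ to $c_\varepsilon$ through $\|Q\|_2^2=\lambda^2\mu^{-N}\|v\|_2^2=c_\varepsilon$ (Proposition \ref{Theorem4.2}). Here $\mu$ depends only on $N,p,\varepsilon$, and $\lambda$ depends on $\|v\|_p^p$. The normalization and $W_p(v)=C_{N,p}^{-1}$ give $\|v\|_p^p=C_{N,p}\|v\|_2^{p-2}$. Substituting this into $\lambda$ yields one scalar equation for $\|v\|_2$ with a unique positive solution, so $\|v\|_2$, and hence $\omega(\varepsilon)$, does not depend on the choice of optimizer.
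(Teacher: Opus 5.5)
Your Step 2 is the paper's proof. $u_{c_\varepsilon}\in\mathcal N$ gives $\sigma_\omega\le I_\omega(u_{c_\varepsilon})$. For an action GSS $w$, Lemma \ref{Lemma5.1}, the rescaling $w^t$ to mass $c_\varepsilon$, and $m_\varepsilon(c_\varepsilon)=0$ give the reverse inequality. Your Step 1 is also correct, and it spells out what the paper compresses into ``by Theorem \ref{Theorem1.2}''. Rescaling $u_{c_\varepsilon}$ to a normalized optimizer $v$ and using \eqref{7.1}--\eqref{7.2} shows $u_{c_\varepsilon}=\lambda v(\mu\,\cdot)$. So by Lemma \ref{Lemma_eq} its multiplier is $\omega(\varepsilon)$ computed from that particular $v$, consistent with $\omega c_\varepsilon=\frac{p-2}{p}\|u_{c_\varepsilon}\|_p^p$.

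Your last paragraph, however, is wrong. Inserting $\|v\|_p^p=C_{N,p}\|v\|_2^{p-2}$ into $\lambda$ gives $\lambda\|v\|_2=\bigl(\frac{p\alpha}{\beta^2\varepsilon C_{N,p}}\bigr)^{1/(p-2)}$. Hence $\lambda^2\mu^{-N}\|v\|_2^2=\mu^{-N}\bigl(\frac{p\alpha}{\beta^2\varepsilon C_{N,p}}\bigr)^{2/(p-2)}$ does not involve $\|v\|_2$ at all. A direct check of exponents shows it coincides identically with $c_\varepsilon$ as given in \eqref{c_epsilon}; this is exactly why the computation in the proof of Proposition \ref{Theorem4.2} works for every normalized optimizer. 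So $\|Q\|_2^2=c_\varepsilon$ is an identity, not an equation determining $\|v\|_2$, and you have not shown that $\omega(\varepsilon)$ is independent of the choice of $v$.

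Via the correspondence $v\leftrightarrow\lambda v(\mu\,\cdot)$ and $\omega c_\varepsilon=\frac{p-2}{p}\|u\|_p^p$, that independence is equivalent to all energy GSS of mass $c_\varepsilon$ having the same $L^p$ norm. Proving that would need uniqueness information on optimizers of \eqref{1.4}, which is not available; the paper does not prove it either.

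You should drop this claim. What your argument, and the paper's, actually establishes is this: every energy GSS of mass $c_\varepsilon$ is an action GSS for its own multiplier, which equals $\omega(\varepsilon)$ when $v$ in \eqref{omega} is the normalized rescaling of $u_{c_\varepsilon}$. Equivalently, it holds for those energy GSS whose multiplier is the prescribed $\omega(\varepsilon)$, as in the formulation of Theorem \ref{Theorem1.3}. Since Step 2 works for any $\omega>0$, nothing more is needed.
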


\begin{proof}
    By Theorem \ref{Theorem1.2} we know that $u_{c_\varepsilon}$ is a weak solution of \eqref{BS} for $\omega=\omega(\varepsilon)$ defined in \eqref{omega}. Then $u_{c_\varepsilon}\in\mathcal N$, see \eqref{Nehari}, and so
    \begin{equation}\label{5.1}
    	\sigma_\omega\leqslant I_\omega(u_{c_\varepsilon})\,,
    \end{equation}
    where $\sigma_\omega$ is defined in \eqref{sigma_omega}. Our goal is to prove $I_\omega(u_{\varepsilon})=\sigma_\omega$. Now, let $w_{\varepsilon}$ be any action GSS of \eqref{BS} for $\omega=\omega(\varepsilon)$. Then by Lemma \ref{Lemma5.1}, we have
    \begin{equation*}
    	I_\omega(w_\varepsilon)\geqslant I_\omega(w_\varepsilon^t)
    \end{equation*}
    for all $t>0$, where $w_\varepsilon^t:=t^Nw_\varepsilon(t\,\cdot)$. Choosing $t=t_\varepsilon:=\left(\frac{c_\varepsilon}{\|w_{\varepsilon}\|_2^2}\right)^\frac1N$, one has
    \begin{equation}\label{5.2bis}
    	\|w_{\varepsilon}^{t_{\varepsilon}}\|_2^2=t_{\varepsilon}^{N}\|w_{\varepsilon}\|_2^2=c_\varepsilon\qquad\mbox{and}\qquad E_{\varepsilon}(u_{c_\varepsilon})\leqslant E_{\varepsilon}(w_{\varepsilon}^{t_{\varepsilon}})\,.
    \end{equation}
    It follows from \eqref{5.1}-\eqref{5.2bis} that
    \begin{equation*}
    	\begin{split}
    		\sigma_\omega\leqslant I_\omega(u_{c_\varepsilon})&=E_{\varepsilon}(u_{c_\varepsilon})+\frac{\omega(\varepsilon)}2c_\varepsilon\\
    		&\leqslant E_{\varepsilon}(w_{\varepsilon}^{t_{\varepsilon}})+\frac{\omega(\varepsilon)}2\|w_{\varepsilon}^{t_{\varepsilon}}\|_2^2\\
    		&=I_\omega(w_{\varepsilon}^{t_{\varepsilon}})\leqslant I_\omega(w_{\varepsilon})=\sigma_\omega\,.
    	\end{split}
    \end{equation*}
    This clearly shows $I_\omega(u_{c_\varepsilon})=\sigma_\omega\,$.
\end{proof}

\begin{lemma}[Action GSS $\Rightarrow$ Energy GSS]\label{Lemma5.4}
    For any $\varepsilon>0$, let $w_{\varepsilon}$ be any action GSS of \eqref{BS} for $\omega=\omega(\varepsilon)$. Then $w_{\varepsilon}$ is an energy GSS of \eqref{BS} for the mass $c_\varepsilon$ with Lagrange multiplier $\omega=\omega(\varepsilon)$.
\end{lemma}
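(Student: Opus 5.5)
The plan is to run the argument of Lemma \ref{Lemma5.3} in reverse, using the strict maximality in Lemma \ref{Lemma5.1} to pin down the mass of $w_\varepsilon$.

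\textbf{Step 1: the level $\sigma_\omega$.} By Theorem \ref{TheoremB}(3) there exists an energy GSS $u_{c_\varepsilon}$ with $\|u_{c_\varepsilon}\|_2^2=c_\varepsilon$. By Lemma \ref{Lemma_Bonheure}, $E_\varepsilon(u_{c_\varepsilon})=m_\varepsilon(c_\varepsilon)=0$. By Lemma \ref{Lemma5.3}, $u_{c_\varepsilon}$ is an action GSS for $\omega=\omega(\varepsilon)$. Therefore
$$\sigma_\omega=I_\omega(u_{c_\varepsilon})=E_\varepsilon(u_{c_\varepsilon})+\frac{\omega(\varepsilon)}2c_\varepsilon=\frac{\omega(\varepsilon)}2c_\varepsilon .$$

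\textbf{Step 2: rescale $w_\varepsilon$ to mass $c_\varepsilon$.} Let $w_\varepsilon$ be any action GSS. Set $w_\varepsilon^{t}=t^Nw_\varepsilon(t\,\cdot)$ with $t_\varepsilon=(c_\varepsilon/\|w_\varepsilon\|_2^2)^{1/N}$, so that $\|w_\varepsilon^{t_\varepsilon}\|_2^2=t_\varepsilon^N\|w_\varepsilon\|_2^2=c_\varepsilon$. Then $E_\varepsilon(w_\varepsilon^{t_\varepsilon})\geqslant m_\varepsilon(c_\varepsilon)=0$, and since $w_\varepsilon$ weakly solves \eqref{BS}, Lemma \ref{Lemma5.1} applies. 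Together with Step 1 this gives the chain
$$\sigma_\omega=I_\omega(w_\varepsilon)\geqslant I_\omega(w_\varepsilon^{t_\varepsilon})=E_\varepsilon(w_\varepsilon^{t_\varepsilon})+\frac{\omega(\varepsilon)}2c_\varepsilon\geqslant\frac{\omega(\varepsilon)}2c_\varepsilon=\sigma_\omega .$$
Hence all inequalities are equalities: $I_\omega(w_\varepsilon^{t_\varepsilon})=I_\omega(w_\varepsilon)$ and $E_\varepsilon(w_\varepsilon^{t_\varepsilon})=0$.

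\textbf{Step 3: strictness, the main point.} I need $t_\varepsilon=1$, which requires equality in Lemma \ref{Lemma5.1} to hold only at $t=1$. In the expansion \eqref{I-I}, the functions $g_1,g_2,g_3$ are all nonnegative with minimum value $0$ at $t=1$. Moreover $g_2$ is strictly positive for $t\neq1$, because $g_2'$ has the sign of $t^4-1$. Its coefficient $\omega(\varepsilon)\|w_\varepsilon\|_2^2/(2(N+4))$ is strictly positive, since $\omega(\varepsilon)>0$ by \eqref{omega} and $w_\varepsilon\neq0$. So $I_\omega(w_\varepsilon)-I_\omega(w_\varepsilon^t)>0$ for every $t\neq1$. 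Equality in Step 2 therefore forces $t_\varepsilon=1$, that is $\|w_\varepsilon\|_2^2=c_\varepsilon$ and $w_\varepsilon^{t_\varepsilon}=w_\varepsilon$.

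\textbf{Step 4: conclusion.} From Steps 2 and 3, $w_\varepsilon\in S_{c_\varepsilon}$ and $E_\varepsilon(w_\varepsilon)=0=m_\varepsilon(c_\varepsilon)$, so $w_\varepsilon$ is an energy GSS with critical mass. Its Lagrange multiplier equals $\omega(\varepsilon)$. Indeed, $w_\varepsilon$ already solves \eqref{BS} with $\omega=\omega(\varepsilon)$, and testing the equation against $w_\varepsilon$ determines the multiplier uniquely through the Nehari identity, since $\|w_\varepsilon\|_2\neq0$.
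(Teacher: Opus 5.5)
Your argument is essentially the paper's proof. You use the same rescaling $w_\varepsilon^{t}$ to mass $c_\varepsilon$, the same chain of inequalities closing on $\sigma_\omega$, and strictness in Lemma~\ref{Lemma5.1} to force $t_\varepsilon=1$. Your explicit check of that strictness through the $g_2$ term, whose coefficient $\omega(\varepsilon)\|w_\varepsilon\|_2^2$ is positive, is a useful detail; the paper only asserts it.

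The one step to tighten is Step~1. There you take an arbitrary energy GSS from Theorem~\ref{TheoremB} and use Lemma~\ref{Lemma5.3} to place it on the Nehari manifold for $\omega(\varepsilon)$. This silently requires its Lagrange multiplier to be exactly $\omega(\varepsilon)$. For a critical-mass GSS $u$, the Nehari identity together with $E_\varepsilon(u)=0$ gives multiplier $\frac{p-2}{p}\|u\|_p^p/c_\varepsilon$. Equivalently, it is $\frac{(p-2)\alpha}{\beta^2\varepsilon\|v_u\|_2^2}$, with $\alpha,\beta$ as in \eqref{alpha_beta} and $v_u$ the normalized optimizer associated with $u$. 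Nothing established in the paper, Theorem~\ref{Theorem1.2} included, forces this value to coincide for optimizers not related by scaling or translation.

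The paper avoids the issue by taking $u_{c_\varepsilon}$ to be an energy GSS whose multiplier is $\omega(\varepsilon)$. The $Q$ of Lemma~\ref{Lemma_eq} is one such, by Proposition~\ref{Theorem4.2}. Then $u_{c_\varepsilon}\in\mathcal N$ directly, and the paper uses only $\sigma_\omega\leqslant I_\omega(u_{c_\varepsilon})=\frac{\omega(\varepsilon)}2c_\varepsilon$. That inequality is all your chain needs, because the chain then closes on itself and forces every step to be an equality. With this substitution your proof is complete.
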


\begin{proof}
	Since $w_{\varepsilon}$ is an action GSS of \eqref{BS} for $\omega=\omega(\varepsilon)$, in particular is a weak solution of \eqref{BS}, hence Lemma \ref{Lemma5.1} implies that
	\begin{equation*}
		I_\omega(w_{\varepsilon})\geqslant I_\omega(w_\varepsilon^t)\quad\mbox{for all}\ \,t>0\,.
	\end{equation*}
	Let $u_{c_\varepsilon}$ be any energy GSS of \eqref{BS} for the mass $c_\varepsilon$ whose Lagrange multiplier is $\omega=\omega(\varepsilon)$. Then $u_{c_\varepsilon}\in\mathcal N$,
	\begin{equation*}
		\sigma_\omega\leqslant I_\omega(u_{c_\varepsilon})\,,
	\end{equation*}
	and
	\begin{equation*}
		0=E_{\varepsilon}(u_{c_\varepsilon})\leqslant E_{\varepsilon}(u),~~\mathrm{for~~any}~~u\in S_{c_\varepsilon}.
	\end{equation*}
	by Lemma \ref{Lemma_Bonheure}. Taking $t=\tilde t_\varepsilon:=\left(\frac{c_\varepsilon}{\|w_{\varepsilon}\|_2^2}\right)^\frac1N$,
	we find that
	\begin{equation*}
		\|w_{\varepsilon}^{\tilde t_{\varepsilon}}\|_2^2=\tilde t_{\varepsilon}^N\|w_{\varepsilon}\|_2^2=c_\varepsilon\qquad\mbox{and so}\qquad E_{\varepsilon}(u_{c_\varepsilon})\leqslant E_{\varepsilon}(w_{\varepsilon}^{\tilde{t}_{\varepsilon}})\,.
	\end{equation*}
	These show that
	\begin{equation}\label{5.3}
		\begin{split}
			\sigma_\omega\leqslant I_\omega(u_{c_\varepsilon})&=E_{\varepsilon}(u_{c_\varepsilon})+\frac{\omega(\varepsilon)}2c_\varepsilon\\
			&\leqslant E_{\varepsilon}(w_{\varepsilon}^{\tilde t_{\varepsilon}})+\frac{\omega(\varepsilon)}2\|w_{\varepsilon}^{\tilde{t}_{\varepsilon}}\|_2^2\\
			&=I_\omega(w_{\varepsilon}^{\tilde t_{\varepsilon}})\leqslant
			I_\omega(w_{\varepsilon})=\sigma_\omega\,.
		\end{split}
	\end{equation}
	In particular
	\begin{equation*}
		I_\omega(w_{\varepsilon}^{\tilde{t}_{\varepsilon}})=I_\omega(w_{\varepsilon})\,,
	\end{equation*}
	and thus
	\begin{equation*}
		\tilde t_{\varepsilon}=1\,,\qquad\mbox{that is}\qquad\|w_{\varepsilon}\|_2^2=c_\varepsilon\,.
	\end{equation*}
	Going back again to \eqref{5.3}, we deduce that
	\begin{equation*}
		E_{\varepsilon}(w_\varepsilon)=E_{\varepsilon}(u_{c_\varepsilon})=0\,.
	\end{equation*}
	By Lemma \ref{Lemma_Bonheure}, this implies that $w_\varepsilon$ is an energy GSS for the mass $c_\varepsilon$ whose Lagrange multiplier is $\omega=\omega(\varepsilon)$.
\end{proof}

\begin{proof}[Proof of Theorem \ref{Theorem1.3}]
	It follows by combining Lemmas \ref{Lemma5.3}, \ref{Lemma5.4}, and Theorem \ref{Theorem1.2}.
\end{proof}

\section*{Acknowledgements}
Z. Liu is supported by the NSFC (No.12571188) and Guangdong Basic and Applied Basic Research Foundation (Nos. 2024A1515012704). Y. Su is supported by the Natural Science Research Project of Anhui Educational Committee (Grant No.  2023AH040155). G. Romani is a member of the {\em Gruppo Nazionale per l'Analisi Ma\-te\-ma\-ti\-ca, la Probabilit\`a e le loro Applicazioni} (GNAMPA) of the {\em Istituto Nazionale di Alta Matematica} (INdAM), and partially supported by INdAM-GNAMPA Project 2026 titled \textit{Structural degeneracy and criticality in (sub)elliptic PDEs} (CUP E53C25002010001).

\section*{Conflict of interest}
\noindent The author declares that he has no known competing financial interests or personal relationships that could have appeared to influence the work reported in
this article.

\section*{Data availability statement}
\noindent No data were used for the research described in the article.


\small
\begin{thebibliography}{10}

\bibitem{Brezis-Lieb1983PAMS}
H. Br\'{e}zis, E. Lieb,
\href{https://doi.org/10.2307/2044999}{A relation between pointwise convergence of functions and convergence of functionals},
Proc. Am. Math. Soc. 88 (1983), 486-490.

\bibitem{Bonheure-Casteras-dosSantos-Nascimento2018SIAM}
D. Bonheure, J. Casteras,  E. dos Santos, R. Nascimento,
\href{https://doi.org/10.1137/17M1154138}{Orbitally stable standing waves of a mixed dispersion nonlinear Schr\"{o}dinger equation},
SIAM J. Math. Anal. 50 (2018), 5027-5071.

\bibitem{Bonheure-Casteras-Gou-Jeanjean2019IMRN}
D. Bonheure, J. Casteras, T. Gou, L. Jeanjean,
\href{https://doi.org/10.1093/imrn/rnx273}{Strong instability of ground states to a fourth order Schr\"{o}dinger equation},
Int. Math. Res. Not. 17 (2019), 5299-5315.

\bibitem{Bonheure-Casteras-Gou-Jeanjean2019TAMS}
D. Bonheure, J. Casteras,  T. Gou, L. Jeanjean,
\href{https://doi.org/10.1090/tran/7769}{Normalized solutions to the mixed dispersion nonlinear Schrdinger equation in the mass-critical and supercritical regime},
Trans. Am. Math. Soc. 372 (2019), 2167-2212.

\bibitem{Bonheure-Casteras-Mandel2019JLMS}
D. Bonheure, J. Casteras,  R. Mandel,
\href{https://doi.org/10.1112/jlms.12196}{On a fourth-order nonlinear Helmholtz equation},
J. London Math. Soc. 99 (2019), 831-852.

\bibitem{BN}
D. Bonheure, R. Nascimento,
\href{https://doi.org/10.1007/978-3-319-19902-3_4}{Waveguide solutions for a nonlinear Schr\"odinger equation with mixed dispersion}, A.N.
Carvalho et al. (eds.), Contributions to Nonlinear Elliptic Equations and Systems, Progress in Nonlinear Differential Equations and Their Applications 86, (2015), 31–53

\bibitem{Boulenger-Lenzmann2017AENS}
T. Boulenger, E. Lenzmann,
\href{https://doi.org/10.24033/asens.2326}{Blowup for biharmonic NLS},
Ann. Sci. \'{E}c. Norm. Sup\'{e}r. 50 (2017), 503-544.

\bibitem{Campos-Guzan2022CVPDE}
L. Campos, C. Guz\'{a}n,
\href{https://doi.org/10.1007/s00526-022-02256-x}{Scattering for the non-radial inhomogenous biharmonic NLS equation},
Calc. Var. Partial Differ. Equ. 61 (2022), 156.

\bibitem{CT}
D. Cassani, A. Tarsia,
\href{https://doi.org/10.1515/anona-2021-0210}{Maximum principle for higher order operators in general domains},
Adv. Nonlinear Anal. 11 (2022), no. 1, 655-671.

\bibitem{Cazenave2003BOOK}
T. Cazenave,
Semilinear Schr\"{o}dinger equations,
Courant Lecture Notes in Mathematics, 10
New York University,
Courant Institute of Mathematical Sciences, New York; American Mathematical Society, Providence, RI, 2003.

\bibitem{Cazenave-Lions1982CMP}
T. Cazenave, P. Lions,
\href{http://projecteuclid.org/euclid.cmp/1103921547}{Orbital stability of standing waves for some nonlinear Schr\"{o}dinger equations},
Commun. Math. Phys. 85 (1982), 549-561.

\bibitem{d'Avenia-Pomponio-Schino2023Nonlinearity}
P. d'Avenia, A. Pomponio, J. Schino,
\href{https://doi.org/10.1088/1361-6544/acb62d}{Radial and non-radial multiple solutions to a general mixed dispersion NLS equation},
Nonlinearity, 36 (2023), 1743-1775.

\bibitem{Dinh2021Nonlinearity}
V. Dinh,
\href{https://doi.org/10.1088/1361-6544/abcea5}{Dynamics of radial solutions for the focusing fourth-order nonlinear Schr\"{o}dinger equations},
Nonlinearity, 34 (2021), 776-821.

\bibitem{Dovetta-Serra-Tilli2020AM}
S. Dovetta, E. Serra,  P. Tilli,
\href{https://doi.org/10.1016/j.aim.2020.107352}{Uniqueness and nonuniqueness of prescribed mass NLS ground states on metric graphs},
Adv. Math. 374 (2020), 107352.

\bibitem{Dovetta-Serra-Tilli2023MA}
S. Dovetta, E. Serra,  P. Tilli,
\href{https://doi.org/10.1007/s00208-022-02382-z}{Action versus energy ground states in nonlinear Schr\"{o}dinger equations},
Math. Ann. 385 (2023), 1545-1576.

\bibitem{FY}
Z. Feng, Y. Su
\href{https://doi.org/10.1007/s00033-021-01643-2}{Ground state solution to the biharmonic equation},
Z. Angew. Math. Phys. 73 (2022), no. 1, Paper No. 15, 24 pp.

\bibitem{Fernandez-Jeanjean-Mandel-Maris2022JDE}
A. Fernandez, L. Jeanjean, R. Mandel, M. Maris,
\href{https://doi.org/10.1016/j.jde.2022.04.037}{Non-homogeneous Gagliardo-Nirenberg inequalities in and application to a biharmonic non-linear Schr\"{o}dinger equation},
J. Differ. Equ.  330 (2022), 1-65.

\bibitem{Fibich-Ilan-Papanicolaou2002SIAMJAM}
G. Fibich, B. Ilan, G. Papanicolaou,
\href{https://doi.org/10.1137/S0036139901387241}{Self-focusing with fourth-order dispersion},
SIAM J. Appl. Math. 62 (2002), 1437-1462.

\bibitem{Fibich-Ilan-Schochet2003Nonlinearity}
G. Fibich, B. Ilan, S. Schochet,
\href{https://doi.org/10.1088/0951-7715/16/5/314}{Critical exponents and collapse of nonlinear Schr\"{o}dinger equations with anisotropic fourth-order dispersion},
Nonlinearity, 16 (2003), 1809-1821.

\bibitem{GGS} F. Gazzola, H.-Ch. Grunau, G. Sweers,
\href{https://doi.org/10.1007/978-3-642-12245-3}{Polyharmonic boundary value problems}, Springer Lecture Notes in Mathematics n. 1991, 2010.

\bibitem{GRS}
H-C. Grunau, G. Romani, G. Sweers,
\href{https://doi.org/10.1007/s00208-020-02015-3}{Differences between fundamental solutions of general higher order elliptic operators and of products of second order operators}, Math. Ann. 381, 1031–1084 (2021).

\bibitem{Hajaiej-SongLJ2025MZ}
H. Hajaiej, L. Song,
\href{https://doi.org/10.1007/s00209-025-03705-x}{Uniqueness of normalized ground states for NLS models},
Math. Z. 310 (2025), 8.

\bibitem{JL}
L. Jeanjean, SS. Lu,
\href{https://doi.org/10.1007/s00526-022-02320-6}{On global minimizers for a mass constrained problem},
Calc. Var. Partial Differ. Equ. 61 (2022), 214.

\bibitem{Karpman1996PRE}
V. Karpman,
\href{https://doi.org/10.1016/0375-9601(95)00752-0}{Stabilization of soliton instabilities by higher order dispersion: KdV-type equations},
Physics Letters A, 210 (1996), 77-84.

\bibitem{Karpman-Shagalov2000PD}
V. Karpman,
A. Shagalov,
\href{https://doi.org/10.1016/S0167-2789(00)00078-6}{Stability of solitons described by nonlinear Schr\"{o}dinger-type equations with higher-order dispersion},
Physica D, 144 (2000), 194-210.

\bibitem{Lenzmann-Weth2023JAM}
E. Lenzmann, T. Weth,
\href{https://doi.org/10.1007/s11854-023-0311-2}{Symmetry breaking for ground states of biharmonic NLS via Fourier extension estimates},
J. Anal. Math. 152 (2024), 777-800.

\bibitem{Lenzmann-Sok2021IMRN}
E. Lenzmann, J. Sok,
\href{https://doi.org/10.1093/imrn/rnz274}{A sharp rearrangement principle in Fourier space and symmetry results for PDEs with arbitrary order},
Int. Math. Res. Not., (2021), 15040-15081.

\bibitem{LZZ}
T. Luo, S. Zheng, S. Zhu,
\href{https://doi.org/10.1007/s10473-023-0205-5}{The existence and stability of normalized solutions for a bi-harmonic nonlinear Schrödinger equation with mixed dispersion.}
Acta Math. Sci. Ser. B (Engl. Ed.) 43 (2023), no. 2, 539-563.

\bibitem{LTW}
X. Luo, Z. Tang, L. Wang,
\href{https://doi.org/10.1080/00036811.2023.2213243}{Infinitely many solutions for nonlinear fourth-order Schrödinger equations with mixed dispersion},
Appl. Anal. 103 (2024), no. 5, 898-926.

\bibitem{LuoX-YangT2023SCM}
X. Luo, T. Yang,
\href{https://doi.org/10.1007/s11425-022-1997-3}{Normalized solutions for a fourth-order Schr\"{o}dinger equation with a positive second-order dispersion coefficient},
Sci. China Math. 66 (2023), 1237-1262.

\bibitem{Lions} P.-L. Lions,
\href{https://www.numdam.org/item/AIHPC_1984__1_2_109_0/}{The concentration-compactness principle in the calculus of variations. The locally compact case. I.}
Ann. Inst. H. Poincaré Anal. Non Linéaire 1 (1984), no. 2, 109-145.

\bibitem{Ma}
C. Ma,
\href{https://doi.org/10.3934/era.2023191}{Normalized solutions for the mixed dispersion nonlinear Schrödinger equations with four types of potentials and mass subcritical growth},
Electron. Res. Arch. 31 (2023), no. 7, 3759-3775.

\bibitem{N}
L. Nirenberg,
\href{https://www.numdam.org/item/ASNSP_1959_3_13_2_115_0/}{On elliptic partial differential equations},
Ann. Scuola Norm. Sup. Pisa Cl. Sci. (3) 13 (1959), 115–162.

\bibitem{Pausader2007DPDE}
B. Pausader,
\href{https://doi.org/10.4310/DPDE.2007.v4.n3.a1}{Global well-posedness for energy critical fourth-order Schr\"{o}dinger equations in the radial case},
Dyn. Partial Differ. Equ. 4 (2007), 197-225.

\bibitem{Pausader2009JFA}
B. Pausader,
\href{https://doi.org/10.1016/j.jfa.2008.11.009}{The cubic fourth-order Schr\"{o}dinger equation},
J. Funct. Anal. 256 (2009), 2473-2517.

\bibitem{Pausader-ShaoSL2010JHDE}
B. Pausader, S. Shao,
\href{https://doi.org/10.1142/S0219891610002256}{The mass-critical fourth-order Schr\"{o}dinger equation in high dimensions},
J. Hyperbolic Differ. Equ. 7 (2010), 651-705.


\bibitem{Saanouni2021CVPDE}
T. Saanouni,
\href{https://doi.org/10.1007/s00526-021-01973-z}{Energy scattering for radial focusing inhomogeneous bi-harmonic Schr\"{o}dinger equations},
Calc. Var. Partial Differ. Equ. 60 (2021), 113.

\bibitem{Weinstein1983CMP}
M. Weinstein,
\href{http://projecteuclid.org/euclid.cmp/1103922134}{Nonlinear Schr\"{o}dinger equations and sharp interpolation estimates},
Commun. Math. Phys. 87 (1983), 567-576.

\end{thebibliography}
\end{document}